\newtheorem{theorem}{Theorem}[section]
\newtheorem{proposition}[theorem]{Proposition}
\newtheorem{lemma}[theorem]{Lemma}
\newtheorem{corollary}[theorem]{Corollary}
\newtheorem{definition}[theorem]{Definition}
\newtheorem{notation}[theorem]{Notation}
\newtheorem{example}[theorem]{Example}
\newtheorem{remark}[theorem]{Remark}
\newcommand{\skipit}[1]{{}}
\newcommand{\prfend}{\hbox to7pt{\hfil}
\par\vskip-\baselineskip\hbox to\hsize
{\hfil\vbox {\hrule width6pt height6pt}}\vskip\baselineskip}
\newcommand {\PP}{\mathbb{P}}
\newcommand{\cL}{\mathcal{L}}
\newcommand{\cA}{\mathcal{A}}
\newcommand{\cB}{\mathcal{B}}
\newcommand{\cH}{\mathcal{H}}
\newcommand{\cO}{\mathcal{O}}
\newcommand{\cE}{\mathcal{E}}
\newcommand{\cT}{\mathcal{T}}
\DeclareMathOperator{\supp}{supp}
\DeclareMathOperator{\indeg}{indeg}
\DeclareMathOperator{\Der}{Der}
\DeclareMathOperator{\Proj}{Proj}
\DeclareMathOperator{\syz}{syz}
\DeclareMathOperator{\exprk}{exprk}
\newcommand{\myarrow}[2]{\hbox to #1pt{\hfil$\to$\hfil}{\hskip-#1pt{\raise
10pt\hbox to#1pt{\hfil$\scriptscriptstyle #2$\hfil}}}}
\begin{document}

\title{Hypertetrahedral arrangements}

\author[Liena Colarte]{Liena Colarte}
\address{Department de matem\`{a}tiques i Inform\`{a}tica, Universitat de Barcelona, Gran Via de les Corts Catalanes 585, 08007 Barcelona,
Spain}
\email{liena.colarte@ub.edu}

\author[Laura Costa]{Laura Costa}
\address{Department de matem\`{a}tiques i Inform\`{a}tica, Universitat de Barcelona, Gran Via de les Corts Catalanes 585, 08007 Barcelona,
Spain}
\email{costa@ub.edu}

\author[Simone Marchesi]{Simone Marchesi}
\address{Department de matem\`{a}tiques i Inform\`{a}tica, Universitat de Barcelona, Gran Via de les Corts Catalanes 585, 08007 Barcelona,
Spain}
\email{marchesi@ub.edu}

\author[Rosa M. Mir\'o-Roig]{Rosa M. Mir\'o-Roig}
\address{Department de matem\`{a}tiques i Inform\`{a}tica, Universitat de Barcelona, Gran Via de les Corts Catalanes 585, 08007 Barcelona,
Spain}
\email{miro@ub.edu}

\author[Marti Salat]{Marti Salat}
\address{Department de matem\`{a}tiques i Inform\`{a}tica, Universitat de Barcelona, Gran Via de les Corts Catalanes 585, 08007 Barcelona,
Spain}
\email{marti.salat@ub.edu}

\begin{abstract}
In this paper, we introduce the notion of a complete hypertetrahedral arrangement $\cA$ in $\PP^{n}$. We address two basic problems. First, we describe the local freeness of $\cA$ in terms of smaller complete hypertetrahedral arrangements and graph theory properties, specializing the Musta\c{t}\u{a}-Schenck criterion. As an application, we obtain that general complete hypertetrahedral arrangements are not locally free. In the second part of this paper, we bound the initial degree of the first syzygy module of the Jacobian ideal of $\cA$.
\end{abstract}

\thanks{Acknowledgements:   The first, second and fourth authors  are partially   supported
by  MTM2016--78623-P. The fifth author is supported by MDM-2014-0445-18-2}

\subjclass[2010]{14N20, 13D02, 14F06 - 2020 AMS codes}
\keywords{hyperplane arrangement, logarithmic derivation, Jacobian syzygy, vector bundle}

\maketitle

\tableofcontents

\markboth{}{}

\today

\large

\section{Introduction}

The study of the module $\Der(-\log\,\cA )$ of logarithmic vector fields tangent to the reduced divisor $D_{\cA}$ of a hyperplane arrangement $\cA = \{H_{1},\hdots, H_{m}\}$ began with Saito in \cite{S} and Terao in \cite{Terao}. Since then this topic has been further developed, as it can be seen in \cite{ADS}, \cite{Dimca16}, \cite{Dolgachev-Kapranov}, \cite{Edelman-Reiner}, \cite{MV},  \cite{Mustata-Schenck} or \cite{Ziegler}. A great portion of these contributions seek to determine the algebraic structure of $\Der(-\log\,\cA )$. This is equivalent to describe the first syzygy module $\syz(J_{\cA})$ of the Jacobian ideal $J_{\cA}$ associated to a defining equation of $D_{\cA}$. In this direction, many efforts have been focused on the freeness and local freeness of $\Der(-\log\, \cA )$, and on the initial degree of $\syz(J_{\cA})$.

In this paper, we tackle these questions for a new family of hyperplane arrangements that we call hypertetrahedral. Denote by $e_0=(1:0:\hdots :0)$, $\hdots, $ $e_n=(0:0:\hdots :0:1)$ the vertices  of a $n$-dimensional simplex. A {\em hypertetrahedral arrangement} $\cA \subset \PP^{n}$ is a hyperplane arrangement consisting of hyperplanes passing through the $\binom{n+1}{2}$ linear subspaces
$\mathcal{L}_{i,j}\subset \PP^n$ of codimension 2 defined by the vertices $e_0, \hdots, \widehat{e_{i}}, \hdots, \widehat{e_{j}}, \hdots, e_{n}$ with $i < j$. We say that $\cA$ is {\em complete} if all coordinate hyperplanes belong to $\cA$. We say that a hypertetrahedral arrangement is {\em general} if any intersection outside the $n$-simplex has minimal dimension.
 For $n = 2$, hypertetrahedral arrangements coincide with the family of triangular arrangements introduced in \cite{MV}. Graphic arrangements and the Fermat arrangement are other examples of hypertetrahedral arrangements.

In \cite{S} it was proved that for any hyperplane arrangement $\cA$, its module of derivations $\Der(-\log\,\cA)$ is reflexive. This grants the freeness of hyperplane arrangements in $\PP^{1}$, and the local freeness of line arrangements in $\PP^{2}$. In general, locally free hyperplane arrangements were studied in \cite{Mustata-Schenck}, where the authors provided a nice characterization of them. We specialize this result to our family of hypertetrahedral arrangements. We obtain a local freeness criterion involving only smaller dimensional hypertetrahedral arrangements and graph theory properties. As a remarkable consequence of this result, we prove that complete general hypertetrahedral arrangements are not locally free. Our second goal is to study the generators of the first syzygy module $\syz(J_{\cA})$ of the Jacobian ideal associated to a hypertetrahedral arrangement. We provide upper and lower bounds for the initial degree of $\syz(J_{\cA})$, which are sharp for triangular arrangements. Moreover, the lower bound we give turns out to be sharp for large families of hypertetrahedral arrangements in any dimension.

\vspace{0.3cm}
Let us outline how this work is organized. Section \ref{defs and prelim results} contains the basic definitions and results about hyperplane arrangements needed in the rest of this paper. The main body of this article is divided in the remaining two sections. In Section \ref{local freeness of Hypertetrahedral arrangements}, we define hypertetrahedral arrangements and we study the local freeness of its module of derivations. Let $\cA \subset \PP^{n}$ be a complete hypertetrahedral arrangement with intersection lattice $L(\cA)$. We associate to any $X \in L(\cA)$ a graphic arrangement $\cA_{\Gamma_{X}}$ and a smaller dimensional complete hypertetrahedral arrangement $\cA_{W_{X}}$. The main result Theorem \ref{Theorem: Main Locally Free} proves that $\cA$ is  locally free at $X$ if and only if $\cA_{W_{X}}$ is free and the graph $\Gamma_{X}$ is chordal. By means of this criterion, we prove that general complete hypertetrahedral arrangements are not locally free. 

The last section is devoted to the generators of the first syzygy module $\syz(J_{\cA})$. We give lower and upper bounds for the initial degree of $\syz(J_{\cA})$ and we present families of hypertetrahedral arrangements reaching these bounds.
The lower bound is found in Theorem \ref{ThmBound}, altogether with a set of linear equations describing the generators of $\syz(J_{\cA})$. In the last part of this section, we further develop these equations for triangular arrangements. In Theorem \ref{trunyo}, we apply them to determine the initial degree for $\syz(J_{\cA})$ for any triangular arrangement.

\vskip 5mm \noindent
 {\bf Acknowledgements.}
 The first and last authors are grateful to Prof. Roberta di Gennaro for useful discussions on hyperplane arrangements. All the authors were partially supported by PID2020-113674GB-I00.

\section{Preliminaries}
\label{defs and prelim results}
We fix $k$ an algebraically closed field of characteristic zero, $R=k[x_0,\hdots ,x_n]$ and $\PP^n=\Proj(R)$. We set $e_i=(0:\hdots :1:\hdots :0)$ and $L_i$ the hyperplane defined by $x_i=0$, for $0\le i \le n$. For any homogeneous polynomial $f\in R_d$ of degree $d$, we denote by $J_f$ the Jacobian ideal generated by the partial derivatives $\partial_{x_{j}}f$ of $f$ with respect to $x_j$, $j = 0, \hdots , n$. For any graded $R$-module $M$, we denote by $\indeg(M)$ the {\em initial degree} of $M$, that is, the minimum degree of a nonzero element in $M$. By
$\Der_k(R)$ we denote the free $R$-module of rank $n+1$ generated by the partial derivatives $\partial_{x_{i}}$, $i = 0,\hdots,n$.

Next we recall some basic notions about hyperplane arrangements, for further details see for example \cite{Orlik-Terao}.
A {\em hyperplane arrangement} $\cA = \{ H_1, \hdots ,H_m \} $ in
$\PP ^n$ is
a collection of $m$ distinct hyperplanes of $\PP^n$. Any subcollection $\cB \subset \cA$ is called a {\em subarrangement}. In particular we denote by $\cE_{n}=\emptyset$ the {\em empty arrangement} in $\PP^{n}$.
The hyperplane arrangement divisor $D_{\cA}$ is
defined as $D_{\cA} =\bigcup _{i=1}^m H_i$. If we denote by $f_i$ a linear form defining the hyperplane $H_i$, then $f_{\cA} =\prod _{i=1}^m
 f_i$ is an equation of $D_{\cA}$, which we call a {\em defining equation} for $\cA$. We set $f_{\cE_{n}}:=1$ for the empty arrangement.

Given a hyperplane arrangement $\cA$ in $\PP^{n}$, we define the {\em intersection lattice} $L(\cA)$ of $\cA$ as follows:
$$
L(\cA)=\{H_{i_{0}} \cap \dotsb \cap H_{i_{s}} \, \mid \, i_{1} \leq \dotsb \leq i_{s},\;1 \leq s \leq m \}.
$$
Notice that $L(\cA)$ is partially ordered by reverse inclusion. For $X \in L(\cA)$ we define the {\em localized arrangement} of $\cA$ at $X$ to be $\cA_{X} := \{H \in \cA\, \mid\, X \subset H \}$.

\begin{definition} \rm
With the above notation, the {\em module} $\Der(-\log \cA )$ {\em of logarithmic derivations}
of $\cA $ is the set of $R$-linear derivations $\theta \in \Der_k(R)$ such that $\theta(f_{\cA }) \subset  (f_{\cA})$.
 A derivation
$\theta \in \Der(-\log \cA )$ has degree $d$ if $\theta =\sum _{i=0}^n \theta _i \partial_{x_{i}}$ with $\theta _i\in R_d$. The Euler derivation
$\theta _E:=\sum _{i=0}^n x_i\partial_{x_{i}}$ generates a free
submodule $R\cdot \theta _E$ of $\Der(-\log \cA )$ of rank one. The quotient of $\Der(-\log \cA )$ by $R\cdot \theta _E$ is denoted by $\Der(-\log \cA )_0$. The {\em sheaf of logarithmic vector fields} $\cT_{\PP^n}(-\log D_{\cA} )$, which we will also denote by $\cT_\cA$, is defined as
the sheafification of $\Der(-\log \cA )_0$.
\end{definition}

\begin{definition} \rm
A hyperplane arrangement $\cA $ in
$\PP^n$ is {\em free} if $\Der(-\log \cA )$ is a free $R$-module of rank $n+1$. In this
case, the degrees $1,d_1, \ldots, d_{n}$ of the generators $\theta_E, \theta_1, \ldots, \theta_{n}$ of $\Der(-\log \cA )$ are called the {\em exponents of the arrangement}.
\end{definition}

By \cite[Proposition 2.4]{Terao}, we have 
$$\Der(-\log \cA )=R\cdot \theta _E \oplus \syz (J_{\cA}), $$
where $J_{\cA} := J_{f_\cA}$ is the Jacobian ideal of $f_{\cA}$, and $\syz (J_{\cA})$ denotes the module of
syzygies on $J_{\cA}$, i.e. the polynomial relations on the generators of $J_{\cA}$. Notice that $\syz (J_{\cA})$ is isomorphic to $\Der(-\log \cA )_0$.

\begin{theorem} The hyperplane arrangement $\cA $ is free if and only if there exist $n+1$ logarithmic derivations
$$\theta _i =\sum _{j=0}^nf_{ij}\partial _{x_{i}} \in \Der(-\log \cA )$$
such that $det([f_{ij} ]) = c\cdot f_{A}$ for some $c\ne  0.$
\end{theorem}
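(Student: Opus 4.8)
This is Saito's criterion, and I would prove both directions by a single determinantal argument, writing $\theta_i=\sum_{j=0}^{n}f_{ij}\partial_{x_j}$ so that $f_{ij}=\theta_i(x_j)$ and $M=[f_{ij}]$. Two ingredients suffice: that $f_\cA$ is reduced, since the $H_i$ are distinct; and the elementary observation that if $\theta\in\Der(-\log\cA)$ and $f$ is a linear form dividing $f_\cA$, then $\theta(f)\in(f)$ — indeed, writing $f_\cA=fg$ with $\gcd(f,g)=1$, from $\theta(f_\cA)=g\,\theta(f)+f\,\theta(g)\in(f_\cA)$ one gets $f\mid g\,\theta(f)$, hence $f\mid\theta(f)$. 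In particular, after a linear change of coordinates bringing a chosen $H_\ell\in\cA$ to $\{x_0=0\}$, every $\theta\in\Der(-\log\cA)$ has $\theta(x_0)\in(x_0)$, i.e. the $0$-th coordinate of its coefficient vector is divisible by $x_0$.

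For sufficiency, assume $\theta_0,\dots,\theta_n\in\Der(-\log\cA)$ with $\det M=c\,f_\cA$, $c\neq0$. Then $\det M\neq0$, so the $\theta_i$ are linearly independent over $R$; I claim they also generate $\Der(-\log\cA)$, and then the surjection $R^{n+1}\to\Der(-\log\cA)$, $e_i\mapsto\theta_i$, is injective as well (injectivity is exactly linear independence), hence an isomorphism, so $\Der(-\log\cA)$ is free of rank $n+1$ and $\cA$ is free. To prove the claim, take $\theta=\sum_j h_j\partial_{x_j}\in\Der(-\log\cA)$; over the fraction field of $R$ the $\theta_i$ form a basis, so $\theta=\sum_i g_i\theta_i$ with, by Cramer's rule, $g_i=\det M_i/\det M=\det M_i/(c\,f_\cA)$, where $M_i$ is $M$ with its $i$-th row replaced by $(h_0,\dots,h_n)$. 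It is enough to show $f_\cA\mid\det M_i$; since the defining linear forms of $\cA$ are pairwise non-proportional and $R$ is a UFD, it suffices to prove $f_\ell\mid\det M_i$ for each $H_\ell$, and after the coordinate reduction above the whole $0$-th column of $M_i$ is divisible by $x_0$, giving $x_0\mid\det M_i$. Hence every $g_i\in R$ and the claim follows.

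For necessity, let $\theta_0,\dots,\theta_n$ be a basis of $\Der(-\log\cA)$ and form $M$; linear independence gives $\det M\neq0$, and the $0$-th column argument gives $f_\ell\mid\det M$ for every $H_\ell$, so $f_\cA\mid\det M$; write $\det M=f_\cA\,q$. I would show $q\in k^{\times}$ by checking valuations at every height-one prime $P\subset R$; since $R_P$ is a discrete valuation ring and localization is exact, $\Der(-\log\cA)_P=\{\theta\in\Der_k(R)_P:\theta(f_\cA)\in(f_\cA)R_P\}$. If $P\neq(f_\ell)$ for all $\ell$, then $f_\cA$ is a unit in $R_P$, so this localization equals $\Der_k(R)_P$; the $\theta_i$ localize to a basis of that free module, hence $\det M\in R_P^{\times}$ and $v_P(q)=v_P(\det M)-v_P(f_\cA)=0$. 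If $P=(f_\ell)$, change coordinates so $f_\ell=x_0$; then $(f_\cA)R_P=(x_0)R_P$ and a direct check shows $\Der(-\log\cA)_P$ is $R_P$-free on $x_0\partial_{x_0},\partial_{x_1},\dots,\partial_{x_n}$, so its inclusion into $\Der_k(R)_P$ has determinant a unit times $x_0$; comparing with the basis $\{\theta_i\}$ gives $v_P(\det M)=1=v_P(f_\cA)$, so again $v_P(q)=0$. Thus $q$ has no prime factor, so $q\in k^{\times}$ and $\det M=c\,f_\cA$ with $c\neq0$.

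The routine direction is sufficiency; the real content is in necessity, and precisely in pinning down $\det M$ exactly rather than only up to the factor $q$ — equivalently, in the fact that the exponents of a free arrangement sum to $m=\deg f_\cA$, which falls out of the valuation computation above (that step uses only that $\PP^n$ is smooth, so its local rings at codimension-one points are DVRs). A heavier alternative for this last step would be to combine the reflexivity of $\Der(-\log\cA)$ with Terao's factorization of the characteristic polynomial.
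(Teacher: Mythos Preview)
Your argument is correct and is essentially the classical proof of Saito's criterion: the Cramer-rule computation for sufficiency and the height-one valuation count for necessity are exactly the standard ingredients, and your reduction ``bring $H_\ell$ to $\{x_0=0\}$ so that the $0$-th column of the coefficient matrix lies in $(x_0)$'' is just a coordinate-free column operation in disguise (replace one column of $M$ by the linear combination $\sum_j c_j\cdot(\text{column }j)=(\theta_i(f_\ell))_i$).

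The paper, however, does not prove this theorem at all: its proof reads in full ``See \cite[Theorem 1.8]{S}.'' So there is no approach in the paper to compare with; you have supplied what the paper outsources. Your proof is the one Saito gives (up to cosmetic differences), so in that sense you are aligned with the cited reference. Your closing remark that the degree identity $\sum d_i=\deg f_\cA$ is the real content of the necessity direction is apt, and the valuation argument is the cleanest way to extract it.
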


\begin{proof} See \cite[Theorem 1.8]{S}.
\end{proof}

\begin{example} \label{ex1} \rm
(i) The {\em boolean arrangement} $\cA =\{x_0,x_1,\hdots, x_n\}$ is free with exponents $(1, \hdots, 1)$. Its Jacobian ideal $J_{\cA}$
has the following free $R$-resolution
$$ 0\longrightarrow R(-1)^n\longrightarrow R^{n+1} \longrightarrow J_{\cA}(n)\longrightarrow 0.$$

(ii) The {\em braid arrangement} $\cA _3$ in $\PP^{3}$ with defining equation $f_{\cA} = (x_0-x_1)(x_0-x_2)(x_0-x_3)(x_1-x_2)(x_1-x_3)(x_2-x_3)$ is free with exponents $(0,1,2,3)$. The Jacobian ideal $J_{\cA _3}$ has the free $R$-resolution:
$$0\longrightarrow R\oplus R(-2)\oplus R(-3)\longrightarrow R^4\longrightarrow J_{\cA _3}(5)\longrightarrow 0.$$

(iii) The Jacobian ideal $J_{\cA}$  of the line arrangement $\cA$ in $\PP ^2$ with defining equation $f_{\cA} = x_0x_1x_2(x_0+x_1+x_2)$ has a minimal free $R$-resolution:
$$
0\longrightarrow R(-3)\longrightarrow R(-2)^3\longrightarrow R^3\longrightarrow J_{\cA }(3)\longrightarrow 0.
$$
Therefore, $\cA $ is not free.
\end{example}

Generalizing Example \ref{ex1} (ii), the braid arrangement $\cA_n$ in $\PP^n$ is defined by the equation $f_{\cA_n}=\prod_{0\le i<j\le n}(x_i-x_j)$. In particular, $\cA_{n}$ is free with exponents $(0,1, \hdots ,n)$. Moreover, let $\Gamma = (V,E)$ be a graph with set of vertices $V = \{0, \dotsc, n\}$  and $E$ its set of edges. We define the {\em graphic arrangement} $\cA_{\Gamma}$ associated to $\Gamma$, as the subarrangement of $\cA_{n}$ with equation
$$
f_{\Gamma}=\prod_{(i,j) \in E}(x_{i} - x_{j}).
$$
The freeness of graphic arrangements is characterized using the chordality of the associated graph. More precisely, we have the following definition.
\begin{definition}\rm \label{Definition:chordal_complete star} A graph is called {\em chordal} if  any cycle has a {\em chord}, that is,  an edge not in the cycle which connects two vertices.

In particular, {\em a complete star} is a graph in which all of its vertices are only connected to a fixed one. Notice that, not having cycles of length greater or equal than three, a complete star is chordal.
\end{definition}
\begin{remark}\rm\label{Remark:graphic arrangement}
In terms of the defining equations of the hyperplanes of a graphic arrangement $\cA_{\Gamma}$, we can characterize them as follows:
\begin{itemize}
    \item[(i)] $\cA_{\Gamma}$ is chordal if, for any set $\{i_1,\ldots,i_k\}$, with $4 \leq k \leq n$, such that 
    $$
    (x_{i_1} - x_{i_k}) \prod_{j=1}^{k-1} (x_{i_j}- x_{i_{j+1}}) \mid f_{\cA_{\Gamma}},
    $$ 
    we have at least another hyperplane in the arrangement defined by
    $
    x_{i_h} - x_{i_s},
    $
    with $h<s$,  $\{h,s\}\neq \{1,k\}$ and $s-h\geq 2$;
    \item[(ii)] $\cA_{\Gamma}$ corresponds to a complete star $\Gamma$ if it is defined, up to change of coordinates, by
    $$
    f_{\cA_{\Gamma}} = \prod_{i=1}^m (x_0 - x_i), \:\: \mbox{ with } m<n.
    $$
\end{itemize}
\end{remark}

\begin{example} \rm
(i) The graphic arrangement $\cA$ defined by
$$f_\cA = (x_0-x_1)(x_1-x_2)(x_2-x_3)(x_3-x_4)(x_0-x_4)$$ is not chordal. Indeed, 
the graph $\Gamma_{\cA}$ associated to $\cA$ is a cycle $(0,1,2,3,4)$ of length 5 having no chord (see Figure \ref{figpentagon}).  
On the other hand, adding two hyperplanes and considering
$$f_{\cA'} = (x_0-x_1)(x_1-x_2)(x_2-x_3)(x_3-x_4)(x_0-x_4)(x_1-x_3)(x_1-x_4)$$
makes $\cA'$ a chordal graphic arrangement. Indeed, in the associated graphic $\Gamma_{\cA'}$, any cycle of length $4$, $((0,1,3,4) \, \text{and} \, (1,2,3,4)$, and of length $5$, $(0,1,2,3,4)$, has a chord (see Figure \ref{figchordal}). 

(ii) The graphic arrangement $\cA''$ obtained from $\cA'$ by removing the hyperplanes defined by the equations $x_2 - x_3=0$, $x_0 - x_4=0$ and $x_3-x_4=0$ is a complete star graphic arrangement. Indeed, in the associated graph $\Gamma_{\cA''}$ all the vertices are only connected to the vertex $1$ (see Figure \ref{figcompletestar}).
\begin{figure}[!htb]
   \begin{minipage}{0.30\textwidth}
     \centering
     \includegraphics[scale=0.35]{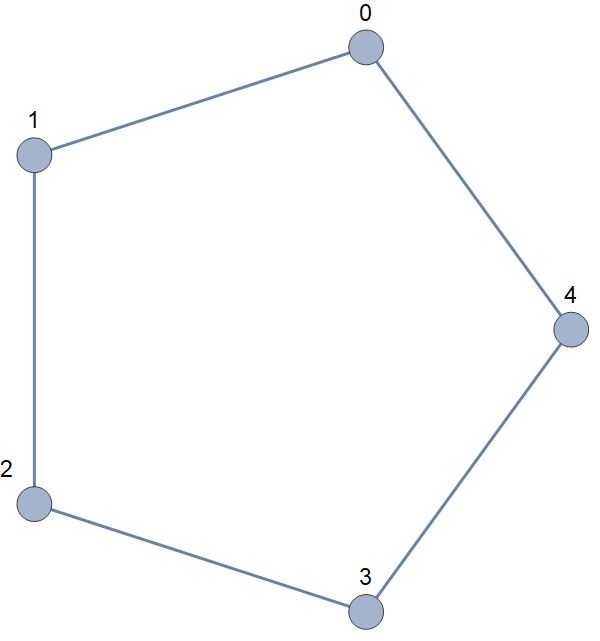}
\caption{$\Gamma_{\cA}$}\label{figpentagon}
   \end{minipage}
   \hfill
   \begin{minipage}{0.30\textwidth}
     \centering
     \includegraphics[scale=0.35]{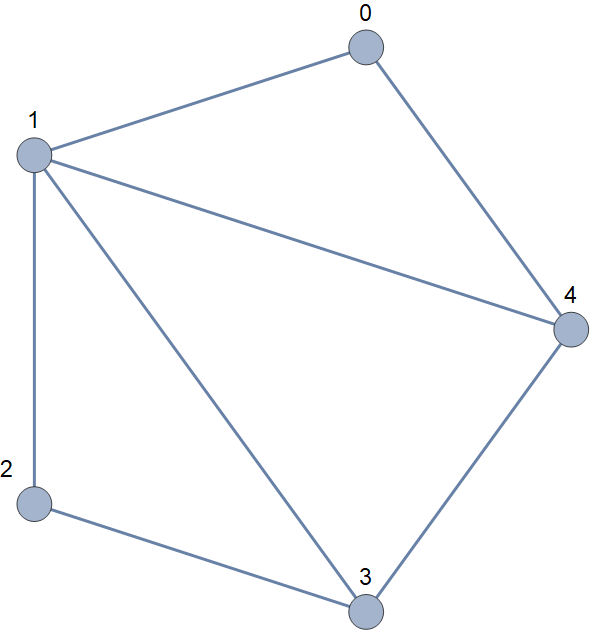}
\caption{$\Gamma_{\cA'}$}\label{figchordal}
   \end{minipage}
   \hfill
   \begin{minipage}{0.30\textwidth}
     \centering
     \includegraphics[scale=0.35]{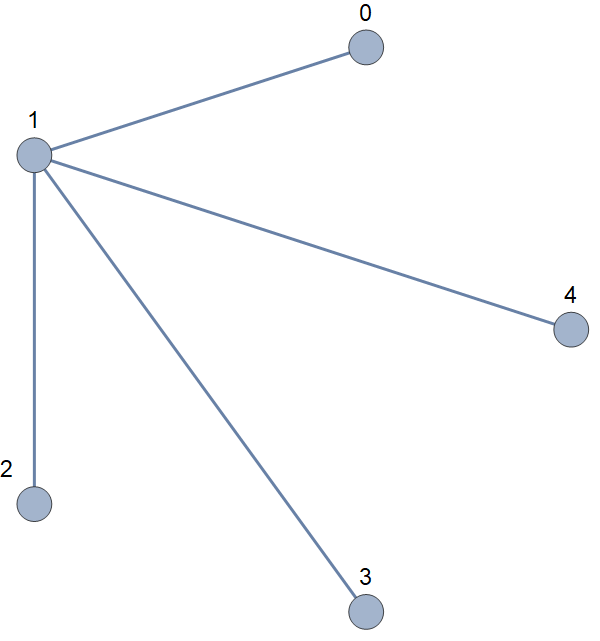}
\caption{$\Gamma_{\cA''}$}\label{figcompletestar}
   \end{minipage}
\end{figure}
\end{example}

\begin{proposition}\label{Prop:freenes graphic arrangement}
 Let $\cA_{\Gamma}$ be the graphic arrangement associated to a graph $\Gamma$. Then, $\cA_{\Gamma}$ is free if and only if $\Gamma$ is chordal.
\end{proposition}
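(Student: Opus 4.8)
The plan is to prove the two implications separately.

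For ``$\Gamma$ chordal $\Rightarrow \cA_{\Gamma}$ free'' I would build the arrangement up one hyperplane at a time along a perfect elimination ordering and apply Terao's Addition Theorem \cite{Orlik-Terao}. Since $\Gamma$ is chordal we may relabel the vertices $0,1,\dots,n$ so that for every $k$ the set $N_{k}=\{\,j<k\mid\{j,k\}\in E\,\}$ of earlier neighbours of $k$ spans a clique of $\Gamma$. Let $\Gamma_{k}$ be the induced subgraph on $\{0,\dots,k\}$, so $\cA_{\Gamma_{0}}=\cE_{0}$ is free and $\cA_{\Gamma_{n}}=\cA_{\Gamma}$; passing from $\cA_{\Gamma_{k-1}}$ to $\cA_{\Gamma_{k}}$ introduces the variable $x_{k}$ and the hyperplanes $x_{k}-x_{j}=0$, $j\in N_{k}$. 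The key point is that when one adjoins such a hyperplane $H=\{x_{k}-x_{j}=0\}$ to the part already built, the restriction $\cA^{H}$ is obtained by setting $x_{k}=x_{j}$; a previously added hyperplane $x_{k}-x_{j'}=0$ then becomes $x_{j}-x_{j'}=0$, which, as $N_{k}$ is a clique, is already a hyperplane of $\cA_{\Gamma_{k-1}}$. Hence at each step the restriction is just $\cA_{\Gamma_{k-1}}$, free by induction, so the Addition Theorem applies and raises the last exponent by one. Iterating over all $k$ shows $\cA_{\Gamma}$ is free, with exponents $(|N_{0}|,|N_{1}|,\dots,|N_{n}|)$. (Equivalently, the perfect elimination ordering exhibits $L(\cA_{\Gamma})$ as a supersolvable lattice, so $\cA_{\Gamma}$ is free; cf.\ \cite{Edelman-Reiner}.) One minor point to address: $\cA_{\Gamma_{k}}$ naturally lives in $\PP^{k}$, not $\PP^{n}$, but freeness is unchanged by the dummy variables $x_{k+1},\dots,x_{n}$, which only contribute free summands $\partial_{x_{k+1}},\dots,\partial_{x_{n}}$.

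For the converse I would argue by contraposition. If $\Gamma$ is not chordal it contains a chordless cycle $C$ of length $m\ge4$ on a vertex set $S=\{i_{1},\dots,i_{m}\}$. Consider the flat $X=\{x_{i_{1}}=\dots=x_{i_{m}}\}\in L(\cA_{\Gamma})$ and the localization $\cA_{X}=\{H\in\cA_{\Gamma}\mid X\subset H\}$. A hyperplane $x_{i}-x_{j}=0$ of $\cA_{\Gamma}$ contains $X$ precisely when $i,j\in S$, and since $C$ is chordless these pairs are exactly the edges of $C$; thus, up to dummy variables, $\cA_{X}$ is the graphic arrangement $\cA_{C}$ of an $m$-cycle. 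Because every localization of a free arrangement is free \cite{Orlik-Terao}, it suffices to show that $\cA_{C}$ is not free for $m\ge4$. I would do this via its characteristic polynomial, which for a graphic arrangement equals the chromatic polynomial of the graph; for the $m$-cycle this gives
\[
\chi(\cA_{C},t)=(t-1)^{m}+(-1)^{m}(t-1)=t(t-1)\,Q(t),\qquad \deg Q=m-2,
\]
where $Q$ has roots $1+\zeta$ with $\zeta$ an $(m-1)$-st root of $(-1)^{m+1}$ other than $-1$. For every $m\ge4$ such a $\zeta$ is non-real — the $(m-1)$-st roots of $(-1)^{m+1}$ distinct from $-1$ are not all real once $m-1\ge3$ (for instance $Q(t)=t^{2}-3t+3$ when $m=4$) — so $\chi(\cA_{C},t)$ does not factor into linear forms over $\RR$. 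By Terao's Factorization Theorem \cite{Orlik-Terao} the characteristic polynomial of a free arrangement splits with non-negative integer roots, so $\cA_{C}$, and hence $\cA_{\Gamma}$, is not free.

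The step I expect to require the most care is the exponent bookkeeping in the forward implication: at each single-hyperplane addition one must check that the triple (current arrangement, its deletion, its restriction) satisfies the numerical hypotheses of the Addition(--Deletion) Theorem, and it is precisely the clique property of $N_{k}$ — that is, chordality — that collapses the restriction onto an already-constructed free arrangement with matching exponents. If one prefers to bypass this, the cleanest route is to invoke Stanley's characterization of supersolvable graphic matroids together with the freeness of supersolvable arrangements, or to cite \cite{Edelman-Reiner} directly.
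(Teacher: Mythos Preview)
Your proof is correct. The paper does not prove this statement at all; it simply writes ``See \cite[Theorem 3.3]{Edelman-Reiner}.'' So you have supplied a full argument where the paper gives only a citation, and the argument you give is essentially the standard one behind that reference: perfect elimination orderings exhibit chordal graphic arrangements as supersolvable (hence free), and a chordless $m$-cycle with $m\ge 4$ obstructs freeness via localization and the factorization theorem. Your computation of the chromatic polynomial of $C_m$ and the observation that $u^{m-1}=(-1)^{m+1}$ has non-real roots for every $m\ge 4$ are both correct, as is the claim that $\cA_X$ is exactly $\cA_C$ because $C$ is an \emph{induced} cycle.

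Two minor points. First, in the forward direction you are right that the Addition--Deletion bookkeeping is delicate: at the step where you adjoin $x_k-x_{j_r}$ to $\cA_{\Gamma_{k-1}}\cup\{x_k-x_{j_1},\dots,x_k-x_{j_{r-1}}\}$ you must verify that the restriction is not merely \emph{isomorphic} to a free arrangement but has exponents contained (as a multiset) in those of the deletion; this is exactly what the clique condition on $N_k$ guarantees, but it deserves an explicit sentence. The supersolvable route you mention avoids this entirely and is the cleaner way to write it up. Second, $\cA_{\Gamma_0}$ should be the empty arrangement $\cE_n$ in the ambient $\PP^n$, not $\cE_0$; this is only a notational slip and does not affect the argument.
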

\begin{proof} See \cite[Theorem 3.3]{Edelman-Reiner}.
\end{proof}

Next we define the product of two hyperplane arrangements and characterize its freeness.

\begin{definition}\rm
Let $\cA_{1}$ and $\cA_{2}$ be two hyperplane arrangements in $\PP^{n}$ and $\PP^{m}$, respectively. Let $f_{\cA_{1}}\in k[x_{0}, \dotsc, x_{n}]$ and $f_{\cA_{2}} \in k[y_{0}, \dotsc, y_{m}]$ be their equations. We define the {\em product arrangement} $\cA_{1} \times \cA_{2}$ as the hyperplane arrangement in $\PP^{n+m+1}$ with equation  $f_{\cA_{1}}f_{\cA_{2}} \in k[x_{0}, \dotsc, x_{n}, y_{0}, \dotsc, y_{m}]$.
\end{definition}

\begin{proposition}\label{prop:freeness of product}
Let $\cA_{1}$ and $\cA_{2}$ be two hyperplane arrangements in $\PP^{n}$ and $\PP^{m}$, respectively. The product arrangement $\cA_{1} \times \cA_{2}$ is free if and only if both $\cA_{1}$ and $\cA_{2}$ are free.
\end{proposition}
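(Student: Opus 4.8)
The plan is to establish an explicit direct-sum decomposition of the module of logarithmic derivations of the product and then to descend freeness along the faithfully flat inclusion of one polynomial ring into the larger one. Write $R_1=k[x_0,\dots,x_n]$, $R_2=k[y_0,\dots,y_m]$ and $R=k[x_0,\dots,x_n,y_0,\dots,y_m]=R_1\otimes_k R_2$, so that $f_{\cA_1\times\cA_2}=f_{\cA_1}f_{\cA_2}$ with $f_{\cA_1}\in R_1$ and $f_{\cA_2}\in R_2$ coprime in the UFD $R$. Every $\theta\in\Der_k(R)$ decomposes uniquely as $\theta=\theta'+\theta''$ with $\theta'=\sum_{i=0}^n a_i\partial_{x_i}$, $\theta''=\sum_{j=0}^m b_j\partial_{y_j}$ and $a_i,b_j\in R$.

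First I would reduce the membership condition. From $\theta(f_{\cA_1}f_{\cA_2})=\theta(f_{\cA_1})f_{\cA_2}+f_{\cA_1}\theta(f_{\cA_2})$ and $\gcd(f_{\cA_1},f_{\cA_2})=1$ one gets, by an elementary divisibility argument, that $f_{\cA_1}f_{\cA_2}\mid\theta(f_{\cA_1}f_{\cA_2})$ if and only if $f_{\cA_1}\mid\theta(f_{\cA_1})$ and $f_{\cA_2}\mid\theta(f_{\cA_2})$. Since $f_{\cA_1}$ involves no $y$-variable and $f_{\cA_2}$ no $x$-variable, $\theta(f_{\cA_1})=\theta'(f_{\cA_1})$ and $\theta(f_{\cA_2})=\theta''(f_{\cA_2})$, so $\Der(-\log\cA_1\times\cA_2)$ consists exactly of the $\theta=\theta'+\theta''$ with $f_{\cA_1}\mid\theta'(f_{\cA_1})$ and $f_{\cA_2}\mid\theta''(f_{\cA_2})$, and these two conditions are independent.

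Next I would identify the admissible $x$-parts. Expanding $a_i=\sum_\alpha a_{i,\alpha}y^\alpha$ with $a_{i,\alpha}\in R_1$ and using $f_{\cA_1}R=\bigoplus_\alpha f_{\cA_1}R_1\,y^\alpha$, divisibility by $f_{\cA_1}$ in $R$ can be tested component by component: $f_{\cA_1}\mid\theta'(f_{\cA_1})$ holds iff $\sum_i a_{i,\alpha}\partial_{x_i}\in\Der(-\log\cA_1)$ for all $\alpha$. Hence the module of admissible $x$-parts is precisely $\Der(-\log\cA_1)\otimes_{R_1}R$, viewed inside $\bigoplus_i R\,\partial_{x_i}$ via the natural map (injective, by flatness of $R$ over $R_1$); symmetrically for the $y$-parts. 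Together with the previous step this yields the graded $R$-module decomposition
$$\Der(-\log\cA_1\times\cA_2)=\bigl(\Der(-\log\cA_1)\otimes_{R_1}R\bigr)\oplus\bigl(\Der(-\log\cA_2)\otimes_{R_2}R\bigr).$$

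Finally I would conclude. If both $\cA_i$ are free, each summand is $R$-free of rank $n+1$, resp. $m+1$, so $\Der(-\log\cA_1\times\cA_2)$ is free of rank $n+m+2$ and $\cA_1\times\cA_2$ is free. Conversely, if $\cA_1\times\cA_2$ is free then the left-hand side is a finitely generated graded free $R$-module, so each direct summand is a finitely generated graded projective, hence free, $R$-module; comparing the minimal graded free resolution of $\Der(-\log\cA_1)$ over $R_1$ with its flat base change to $R$ (which stays minimal), the freeness of $\Der(-\log\cA_1)\otimes_{R_1}R$ forces $\Der(-\log\cA_1)$ to be $R_1$-free, and likewise for $\cA_2$. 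The step I expect to demand the most care is the coefficient-wise divisibility test of the third paragraph and its exact packaging as the base change $\Der(-\log\cA_1)\otimes_{R_1}R$; the freeness descent in the last step is routine once the decomposition is available. (The statement is classical; compare \cite{Orlik-Terao}.)
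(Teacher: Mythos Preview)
Your argument is correct and is essentially the standard proof: the key decomposition
\[
\Der(-\log\,\cA_1\times\cA_2)\;\cong\;\bigl(\Der(-\log\,\cA_1)\otimes_{R_1}R\bigr)\oplus\bigl(\Der(-\log\,\cA_2)\otimes_{R_2}R\bigr)
\]
is exactly what underlies \cite[Proposition~4.28]{Orlik-Terao}, which is all the paper invokes here (the paper gives no proof of its own, only the citation). Your coprimality/coefficient-wise divisibility argument is the clean way to obtain this decomposition, and your descent step---graded direct summands of a free module are free, and freeness of $M\otimes_{R_1}R$ forces freeness of $M$ by faithful flatness---is standard and sound. In short, you have reproduced the classical proof that the paper defers to Orlik--Terao.
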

\begin{proof}
See \cite[Proposition 4.28]{Orlik-Terao}.
\end{proof}

We end this preliminary section with a lemma which will play an important role later.

\begin{lemma}\label{section} Let $\cA =\{ H_1, \hdots, H_r \}$, $r\ge 2$, be a hyperplane arrangement in $\PP^n$ with equation $f _{\cA }$. Let $\cA '=\cA \bigcup \{ H_{r+1}, \hdots, H_{r+s} \}$, $s\ge 1$, be another hyperplane arrangement in $\PP^n$ with equation $f_{\cA'}=f_{\cA }\prod_{j=1}^sh_{r+j}$ being $h_{i}$ the linear equation of $H_i$. In particular, $(\bigcup _{i=1}^rH_i)\bigcap(\bigcup _{j=1}^sH_{r+j})$ has codimension 2 in $\PP^n$. Set $g_s=\prod_{j=1}^sh_{r+j}$. Then, there is an injection
$$0\longrightarrow \cT_{\cA} (-s) \stackrel{g_s }{\longrightarrow } \cT_{\cA '}.$$
\end{lemma}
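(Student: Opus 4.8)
The plan is to build the map first on the modules of logarithmic derivations and then sheafify. The starting point is that multiplication by $g_s$ sends $\Der(-\log\cA)$ into $\Der(-\log\cA')$. Indeed, if $\theta\in\Der(-\log\cA)$, write $\theta(f_{\cA})=c\,f_{\cA}$ with $c\in R$; using that $\theta$ is a derivation,
\[
g_s\,\theta(f_{\cA'})=g_s\,\theta(f_{\cA}\,g_s)=g_s\bigl(c\,f_{\cA}\,g_s+f_{\cA}\,\theta(g_s)\bigr)=f_{\cA}\,g_s\bigl(c\,g_s+\theta(g_s)\bigr)\in(f_{\cA}g_s)=(f_{\cA'}),
\]
so $g_s\theta\in\Der(-\log\cA')$. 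Thus $\theta\mapsto g_s\theta$ is an $R$-linear, degree-preserving map $\Der(-\log\cA)(-s)\to\Der(-\log\cA')$, and it is injective because $\Der_k(R)\cong R^{n+1}$ is free over the domain $R$ and $g_s\neq0$.

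Next I would pass to the quotients by the Euler derivation. Since $g_s\theta_E\in R\theta_E$, the map above carries $R\theta_E\subset\Der(-\log\cA)$ into $R\theta_E\subset\Der(-\log\cA')$, hence descends to a degree-preserving map $\Der(-\log\cA)_0(-s)\to\Der(-\log\cA')_0$. Applying the (exact) sheafification functor yields a morphism $\varphi\colon\cT_{\cA}(-s)\to\cT_{\cA'}$, which by construction is multiplication by $g_s$.

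What remains is injectivity of $\varphi$, and this is the one step that is not pure formalism, since injectivity of a graded module map need not survive passing to a quotient. Using the splitting $\Der(-\log\cA)=R\theta_E\oplus\syz(J_{\cA})$, the sheaf $\cT_{\cA}$ is the sheafification of $\syz(J_{\cA})$, hence torsion-free, being a submodule of the free module $R^{n+1}$. Therefore $\ker\varphi$ is a torsion-free coherent subsheaf of $\cT_{\cA}(-s)$, so it vanishes as soon as its stalk at the generic point $\xi$ of $\PP^{n}$ does; but $\varphi_{\xi}$ is multiplication by the unit $g_s$ of the function field $K(\PP^{n})$ on the $K(\PP^{n})$-vector space $\cT_{\cA,\xi}$, hence an isomorphism, so $\ker\varphi=0$. (Alternatively, one argues directly on modules: if $g_s\theta=a\theta_E$ then $g_s\mid ax_j$ for all $j$, and $\gcd(x_0,\dots,x_n)=1$ forces $\theta\in R\theta_E$, so the induced map on quotients is already injective.) Finally, I note that the codimension-two condition in the statement is automatic here, as distinct hyperplanes always meet in codimension $2$; it is recorded only because it describes the geometric situation in which this injection is afterwards completed to a short exact sequence.
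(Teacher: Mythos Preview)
Your proof is correct and follows essentially the same approach as the paper: the core computation is the Leibniz-rule identity $g_s\theta(f_{\cA'})=g_s(\theta(f_{\cA})g_s+f_{\cA}\theta(g_s))\in(f_{\cA'})$, which the paper carries out directly on local sections of the sheaves while you do it on the graded modules and then sheafify. If anything, you are more careful than the paper, which does not explicitly discuss injectivity; your treatment of that point (via torsion-freeness, or via the divisibility argument on $R\theta_E$) is a welcome addition.
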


\begin{proof}
Let us consider an open subset $U \subset \mathbb{P}^n$ and the local description of the logarithmic tangent sheaves involved, see for example \cite{Dolgachev} for more details. Denote also by $f_\cA$ and $f_{\cA'}$ the equations which define locally the hyperplanes. Consider a derivation $\theta \in \left(\cT_{\cA} (-s)\right)_{\mid U}$, i.e. $\theta(f_\cA) \in (f_\cA)$. We have that $g_s\theta(f_{\cA'}) = g_s \theta(f_{\cA}) g_s + g_s f_{\cA} \theta(g_s) \in (g_s f_{\cA}) = (f_{\cA'})$. Therefore $g_s \theta \in \left(\cT_{\cA'} \right)_{\mid U}$ and this concludes the proof.
\end{proof}


\section{The local freeness of hypertetrahedral arrangements}
\label{local freeness of Hypertetrahedral arrangements}

This section is entirely devoted to study the local freeness of hypertetrahedral arrangements. Let us introduce their definition and fix the notation we will use in the sequel. We denote by $\mathcal{L}_{i,j}\subset \PP^n$ the codimension 2 linear subspace passing through the vertices $e_0, \hdots, \widehat{e_{i}}, \hdots, \widehat{e_{j}}, \hdots, e_{n}$ with $i < j$.

\begin{definition}\rm A {\em hypertetrahedral arrangement} is a hyperplane arrangement $\cA $ such that any of its hyperplanes passes through at least one linear subspace $\mathcal{L}_{i,j}\subset \PP^n$, $0\le i < j \le n$. In particular, when $n=2$, we will call it a \textit{triangular arrangement} of lines in $\PP^2$. The hypertetrahedral arrangement $\cA $ is complete if it contains the hyperplanes $L_i:= \{ x_i=0\}$, \; $i=0,\hdots ,n$.
\end{definition}

 We will denote $$\cH(s_{i,j})_{0 \leq i < j \leq n}$$ (or, simply   $\cH(s_{i,j})$)
 the set of all complete hypertetrahedral arrangements $\cA $ in $\PP^n$ with $s_{i,j} + 2$ different hyperplanes passing through $\mathcal{L}_{i,j}$. We will assume that, for all $0 \leq i < j \leq n$, $s_{i,j} \geq 1$ and without loss of generality, we can always assume that $s_{n-1,n} \geq s_{i,j}$, for all $0 \leq i < j \leq n$.

\vspace{3mm}

For any $0 \leq i < j \leq n$, we denote by
$$L_{i,j}^{r}=a_i^{(r;i,j)}x_i+a_j^{(r;i,j)}x_j, \quad \mbox{for} \quad 1 \leq r \leq s_{i,j},$$
where we always assume that the coefficients $a_i^{(r;i,j)}$ and $a_j^{(r;i,j)}$ are different from zero. We  call them the {\em inner hyperplanes} of $\cA$.
We denote by  $L_{i,j}= \{ L_{i,j}^{r}=0 \}_{1 \leq r \leq s_{i,j}}$ the family of inner hyperplanes passing through $\mathcal{L}_{i,j}$. According to this notation our arrangement is given by
$$\cA=\{x_0,x_1,\hdots ,x_n,L_{0,1}^{1}, \hdots,L_{0,1}^{s_{0,1}}, \hdots,L_{n-1,n}^{1}, \hdots,L_{n-1,n}^{s_{n-1,n}}  \}.$$

 Observe that any of these arrangements $\cA \in \cH(s_{i,j})$ has $n+1+\sum _{0\le i < j \leq n} s_{i,j}$ hyperplanes.

\begin{example} \rm

Fix an integer $a\ge 2$. The {\em Fermat arrangement} $\cA$ in $\PP^n$, defined by $\prod _{0\le i<j\le n} (x_i^a-x_j^a)$, is free with exponents $(1, a+1, 2a+1,  \hdots  , (n-1)a+1, na+1)$. In this case, for all $0\le i<j\le n$ and $1 \le r \le a$, we have $L_{i,j}^r=x_i-\eta ^{r-1}x_j$ where $\eta$ is a  primitive $a$-th root of 1. Therefore, we have: $a_i^{(r;i,j)}=1$ and $a_j^{(r;i,j)}=- \eta^{r-1}$.
\end{example}

Despite the above examples, complete hypertetrahedral arrangements are in general not free and the next goal is to characterize whether they are locally free.
For any hyperplane arrangement in $\PP^{n}$, the local freeness was characterized by Musta\c{t}\u{a} and Schenck in \cite[Theorem 3.3]{Mustata-Schenck}. We specialize this criterion for complete hypertetrahedral arrangements.
We fix $\cA \in \cH(s_{i,j})$. For any inner hyperplane $H \in \cA$ with equation $x_{i}-\lambda x_{j}$, with $\lambda\neq 0$, we define $\supp(H) := \{x_{i},x_{j}\}$ the support of $H$. Furthermore, if $H$ is the coordinate hyperplane with equation $x_i$, we define $\supp(H) := \{x_{i}\}$. Similarly, given $\cA'$ a subarrangement of $\cA$, we define $\supp(\cA') := \bigcup_{H \in \cA'} \supp(H)$. In particular, for $X \in L(\cA)$ we define $\supp(X) := \supp(\cA_{X})$ the support of $X$. Finally, for a subarrangement $\cA'$ of $\cA$, we denote by $n-k-1$ the cardinality of the linear system ${\mathcal L}:=\langle \{ x_{j} \, \mid \, x_{j} \notin \supp(\cA')\} \rangle$ and by $\;  \pi \,: \, \PP^{n} \to \PP^{k}$ the projection associated to 
${\mathcal L}$.

We begin associating to each $X \in L(\cA)$ a pair of arrangements $(\cA_{W_{X}}, \cA_{\Gamma_{X}})$,  where $\cA_{W_{X}}$ is a complete hypertetrahedral arrangement and $\cA_{\Gamma_{X}}$ is a graphic arrangement.

\begin{remark}\rm  Let $\cA'$ be a subarrangement of $\cA$. Assume that
$\supp(\cA') = \{x_{i_{0}},\hdots, x_{i_{k}}\}$ with $k < n$. Then
$(\cT_\cA')_{\mid \PP^k}$ is isomorphic to $\cT_{\pi(\cA')} \oplus \mathcal{O}_{\PP^k}^{n-k-1}$.  Hence  $\cA'$ is free if and only if $\pi(\cA')$ is free.
\end{remark}

\begin{definition}\rm
Let $X\in L(\cA)$ be a linear subspace. Define $W_{X}$ as the intersection, of minimal dimension, of coordinate hyperplanes containing $X$. If $X$ is not contained in any coordinate hyperplane, then we set $W_{X}:=\emptyset$ and we say that {\em $X$ is inner}.
\end{definition}

Since $W_{X} \in L(\cA)$, we can consider the localized arrangement $\cA_{W_{X}}$. Assuming that $W_{X}$ is the intersection of $j$ coordinate hyperplanes, we have that $\pi(\cA_{W_{X}})$ is a complete hypertetrahedral arrangement in $\PP^{j-1}$. The following lemma shows that it is a component of $\pi(\cA_{X})$.

\begin{lemma}\label{Lemma:Direct sum decomposition}
$\pi(\cA_{X}) \cong \pi(\cA_{W_{X}}) \times \pi(\cA_{X} - \cA_{W_{X}})$.
\end{lemma}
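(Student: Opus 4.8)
The plan is to establish the claimed product decomposition by a direct analysis of the linear forms defining the hyperplanes in $\cA_X$ and how they interact after the projection $\pi$. First I would set up notation: suppose $X \in L(\cA)$ has $W_X$ equal to the intersection of the $j$ coordinate hyperplanes $L_{i_1}, \hdots, L_{i_j}$, so that $\supp(\cA_{W_X}) = \{x_{i_1}, \hdots, x_{i_j}\}$; call this set $S$. The key observation is a dichotomy for the inner hyperplanes $H \in \cA_X$ that are not in $\cA_{W_X}$. For such an $H$ with equation $x_a - \lambda x_b$, $\lambda \neq 0$, either $\{x_a, x_b\} \subseteq S$ or $\{x_a, x_b\} \cap S = \emptyset$; the \textbf{mixed case}, where exactly one of $x_a, x_b$ lies in $S$, must be ruled out. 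Indeed, if say $x_a \in S$ and $x_b \notin S$, then $X \subseteq H \cap L_{i_1} \cap \hdots \cap L_{i_j}$, and since $X \subseteq L_a$ (as $x_a \in S$ means $L_a \supseteq W_X \supseteq X$), we get $X \subseteq H \cap L_a = \{x_a = x_b = 0\} \subseteq L_b$; this would force $L_b$ into the minimal intersection of coordinate hyperplanes containing $X$, contradicting the minimality in the definition of $W_X$ (it would mean $x_b \in S$). Also, every inner hyperplane of $\cA_{W_X}$ has support contained in $S$ by an analogous argument, and every \emph{coordinate} hyperplane $L_c \in \cA_X$ satisfies $X \subseteq L_c$, hence $L_c \in \cA_{W_X}$ by minimality, so $x_c \in S$. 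Consequently $\cA_X - \cA_{W_X}$ consists exactly of inner hyperplanes whose support is disjoint from $S$.

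Next I would pass to the projection. By the discussion preceding the lemma (the Remark on restriction of $\cT_{\cA'}$ to $\PP^k$, together with the projection $\pi$ associated to the linear system spanned by the coordinates not appearing in $\supp(\cA_X)$), both $\pi(\cA_{W_X})$ and $\pi(\cA_X - \cA_{W_X})$ live in complementary coordinate subspaces: the first involves only the coordinates in $S$, the second only coordinates in $\supp(\cA_X) \setminus S$. After projecting away the coordinates that appear in no hyperplane of $\cA_X$, the ambient $\PP^k$ splits as the join of the subspace on the $S$-coordinates and the subspace on the remaining coordinates, and $\pi(\cA_X)$ is precisely the union of $\pi(\cA_{W_X})$ (pulled back from the first factor) and $\pi(\cA_X - \cA_{W_X})$ (pulled back from the second factor). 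This is exactly the defining-equation condition for a product arrangement: writing the equation of $\pi(\cA_X)$ as a product of the equation of $\pi(\cA_{W_X})$, in the $S$-variables, and the equation of $\pi(\cA_X - \cA_{W_X})$, in the complementary variables, gives $f_{\pi(\cA_X)} = f_{\pi(\cA_{W_X})} \cdot f_{\pi(\cA_X - \cA_{W_X})}$ as a polynomial in disjoint sets of variables, which is the definition of the product $\pi(\cA_{W_X}) \times \pi(\cA_X - \cA_{W_X})$.

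The step I expect to be the main obstacle is the careful bookkeeping in the previous two paragraphs: making sure the ``mixed case'' is genuinely impossible (this is where the minimality in the definition of $W_X$ is essential, and one must handle correctly the subtlety that $X$ may be strictly smaller than $W_X$), and verifying that no coordinate of $\PP^k$ is shared between the two factors, so that the join decomposition of the ambient space is honest and the product-arrangement definition applies verbatim. I would also double-check the degenerate cases: if $X$ is inner then $W_X = \emptyset$, $\cA_{W_X} = \cE$, $f_{\cA_{W_X}} = 1$, and the statement reduces to $\pi(\cA_X) \cong \cE \times \pi(\cA_X)$, which holds trivially; and if $\cA_X = \cA_{W_X}$ the second factor is the empty arrangement, again giving the statement trivially. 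Everything else is the routine translation between ``union of hyperplanes in complementary coordinate subspaces'' and ``product arrangement,'' using the Remark to move freely between $\cA_X$ on $\PP^n$ and its image on $\PP^k$.
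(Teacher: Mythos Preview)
Your proposal is correct and follows essentially the same approach as the paper's proof. Both arguments reduce the lemma to showing that any $H \in \cA_X - \cA_{W_X}$ has support disjoint from the coordinates defining $W_X$, and both derive this from the minimality in the definition of $W_X$; the paper does so by choosing a point $p \in X$ with a suitable nonzero coordinate, while you reach the same contradiction via the set-theoretic containment $X \subseteq H \cap L_a \subseteq L_b$, which is a clean equivalent formulation.
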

\begin{proof}
We may assume that $W_{X} \neq \emptyset$ and $X \subsetneq W_{X}$, otherwise there is nothing to prove.
Let $j$ be the codimension of $W_{X}$. Without loss of generality we assume that $\supp(X) = \{x_{0}, \dotsc, x_{k}\}$ with $j\leq k$ and $W_{X} = H_{0} \cap \dotsb \cap H_{j-1}$. Since $\cA_{X} = \cA_{W_{X}} \bigcup (\cA_{X} - \cA_{W_{X}})$, it is enough to prove that if $H \in \cA_{X} - \cA_{W_{X}}$, then $\supp(H) \subset \{x_{j}, \dotsc, x_{k}\}$. Notice that $\supp(H)\nsubseteq \{x_{0}, \dotsc, x_{j-1}\}$, otherwise $H \in \cA_{W_{X}}$. By contradiction, let us assume that $H$ has equation $x_{l} - \lambda x_{m}$ with $0 \leq l < j \leq m \leq k$. There exists $p = (a_{0}: \dotsb :a_{k}:a_{k+1}: \dotsb :a_{n}) \in X$, such that $a_{0} = \dotsb = a_{j-1} = 0$ and $a_{m} \neq 0$. Otherwise $X$ is contained in $H_{m}$ and $W_{X}$ is not of minimal dimension. Since $p \in H$, then $\lambda = 0$ and $H = H_{l}$ which is a contradiction.
\end{proof}

\begin{remark} \rm \label{Remark:Inner intersection}
Let $Y$ be the intersection of all hyperplanes in $\cA_{X} - \cA_{W_{X}}$.
Then $X = Y \cap W_{X}$ and $\pi(Y)$ is inner in $L(\pi(\cA_{X} - \cA_{W_{X}}))$. Therefore $\cA_{X} - \cA_{W_{X}}$ contains at most one hyperplane passing through each $\mathcal{L}_{i,j}$.
\end{remark}

Now we attach a graphic arrangement $\cA_{\Gamma_{X}}$ to $\cA_{X}$ using the subarrangement $\cA_{X}-\cA_{W_{X}}$. Next, we prove that the pair $(\cA_{W_{X}}, \cA_{\Gamma_{X}})$ determines completely the freeness of $\cA_{X}$.

\begin{definition} \rm  Let $X \in L(\cA)$ be a linear subspace such that $X \subsetneq W_{X}$ and assume that $\supp(\cA_{X} - \cA_{W_{X}}) = \{x_{i_{0}}, \dotsc, x_{i_{k}}\}$. We define $\Gamma_{X} = (V_{X},E_{X})$ the graph associated to $X$ with vertices $V_{X} = \{i_0,\hdots,i_k\}$ and $E_{X}$ the set of edges $(i_l,i_m)$ such that there is $H \in \cA_{X} - \cA_{W}$ with $\supp(H) = \{x_{i_{l}},x_{i_{m}}\}$. We define $\cA_{\Gamma_{X}}$ the graphic arrangement associated to $\Gamma_{X}$.

If $X = W_{X}$ we set $\Gamma_{X} := \emptyset$.
\end{definition}

\begin{theorem}\label{Theorem: Main Locally Free} Let $X \in L(\cA)$ be a linear subspace.
\begin{itemize}
\item[(i)] $\pi(\cA_{X}) \cong \pi(\cA_{W_{X}}) \times \cA_{\Gamma_{X}}$.
\item[(ii)] $\cA_{X}$ is free if and only if $\pi(\cA_{W_{X}})$ is free and $\Gamma_{X}$ is chordal.
\end{itemize}
\end{theorem}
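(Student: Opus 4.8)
The plan is to prove (i) first and then deduce (ii) from it using the results collected in the preliminary section. For part (i), I would combine Lemma \ref{Lemma:Direct sum decomposition} with the identification of $\pi(\cA_X - \cA_{W_X})$ as a graphic arrangement. Concretely, Lemma \ref{Lemma:Direct sum decomposition} gives $\pi(\cA_{X}) \cong \pi(\cA_{W_{X}}) \times \pi(\cA_{X} - \cA_{W_{X}})$, so it suffices to show $\pi(\cA_X - \cA_{W_X}) \cong \cA_{\Gamma_X}$. By Remark \ref{Remark:Inner intersection}, the subarrangement $\cA_X - \cA_{W_X}$ contains at most one inner hyperplane through each $\mathcal{L}_{i,j}$ and no coordinate hyperplane; after the projection $\pi$ the support variables $\{x_{i_0},\dots,x_{i_k}\}$ become the coordinates of $\PP^k$. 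Each surviving hyperplane has equation $x_{i_l} - \lambda x_{i_m}$, and I would argue that on the linear subspace $\pi(X)$, which is inner by Remark \ref{Remark:Inner intersection}, the scalar $\lambda$ is forced to be a specific value; rescaling the coordinates $x_{i_l}$ appropriately (using that $\pi(X)$ meets all coordinate hyperplanes properly) turns every such equation into $x_{i_l} - x_{i_m}$. This exhibits $\pi(\cA_X - \cA_{W_X})$, up to a linear change of coordinates, as the graphic arrangement $\cA_{\Gamma_X}$ with vertex set $V_X$ and edges $E_X$ exactly as in the definition of $\Gamma_X$, proving (i). I should also handle the degenerate cases $X = W_X$ (where $\Gamma_X = \emptyset$ and $\cA_X = \cA_{W_X}$) and $W_X = \emptyset$ ($X$ inner) separately, but these are immediate.

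For part (ii), I would invoke Proposition \ref{prop:freeness of product}: since $\pi(\cA_X) \cong \pi(\cA_{W_X}) \times \cA_{\Gamma_X}$ by part (i), the product is free if and only if both factors are free. Then $\cA_{\Gamma_X}$ is free if and only if $\Gamma_X$ is chordal by Proposition \ref{Prop:freenes graphic arrangement}. Finally I need to pass from the freeness of $\pi(\cA_X)$ to the freeness of $\cA_X$ itself: this is exactly the content of the Remark immediately preceding the definition of $W_X$, which says that for a subarrangement $\cA'$ with $\supp(\cA') = \{x_{i_0},\dots,x_{i_k}\}$ with $k < n$, one has $(\cT_{\cA'})_{\mid \PP^k} \cong \cT_{\pi(\cA')} \oplus \cO_{\PP^k}^{n-k-1}$, hence $\cA'$ is free iff $\pi(\cA')$ is free. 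Applying this to $\cA' = \cA_X$ closes the chain of equivalences: $\cA_X$ free $\iff$ $\pi(\cA_X)$ free $\iff$ $\pi(\cA_{W_X})$ and $\cA_{\Gamma_X}$ both free $\iff$ $\pi(\cA_{W_X})$ free and $\Gamma_X$ chordal.

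The main obstacle I anticipate is the careful bookkeeping in part (i): showing that the defining scalars $\lambda$ of the hyperplanes in $\pi(\cA_X - \cA_{W_X})$ can be simultaneously normalized to $1$ by a single diagonal change of coordinates. The key point is that $\pi(X)$ is a single well-defined linear subspace that is inner in $L(\pi(\cA_X - \cA_{W_X}))$, so all the hyperplanes through it are consistent with one another; one picks a point $p = (b_{i_0} : \dots : b_{i_k})$ of $\pi(X)$ with all coordinates nonzero (such a point exists precisely because $\pi(X)$ is inner and thus not contained in any coordinate hyperplane), and then $x_{i_l} - \lambda x_{i_m}$ vanishing at $p$ forces $\lambda = b_{i_l}/b_{i_m}$; the substitution $x_{i_l} \mapsto b_{i_l}^{-1} x_{i_l}$ (equivalently, working in coordinates where $p = (1:\dots:1)$) simultaneously sends every such equation to $x_{i_l} - x_{i_m}$, so that the resulting arrangement is literally $\cA_{\Gamma_X}$. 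Once this normalization is in place, the isomorphism is a change of coordinates and the rest is formal. I would also double-check that no two edges of $\Gamma_X$ could collapse, i.e. that distinct hyperplanes of $\cA_X - \cA_{W_X}$ give distinct edges, which again follows from Remark \ref{Remark:Inner intersection}.
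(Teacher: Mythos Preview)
Your proposal is correct and follows essentially the same approach as the paper: reduce (i) to showing $\pi(\cA_X - \cA_{W_X}) \cong \cA_{\Gamma_X}$ via Lemma~\ref{Lemma:Direct sum decomposition}, pick a point of the inner locus with all coordinates nonzero and use the diagonal change of coordinates sending it to $(1:\cdots:1)$ to normalize every $x_{i_l}-\lambda x_{i_m}$ to $x_{i_l}-x_{i_m}$, then derive (ii) from Proposition~\ref{prop:freeness of product}, Proposition~\ref{Prop:freenes graphic arrangement}, and the remark that $\cA'$ is free iff $\pi(\cA')$ is free. The only cosmetic difference is that the paper phrases the normalization as the projectivity fixing the coordinate points and sending $p$ to the unit point, and picks $p$ in $\pi(Y)$ (the intersection of the hyperplanes of $\cA_X-\cA_{W_X}$) rather than in $\pi(X)$; since $X = Y\cap W_X$ these project to the same thing in the relevant factor, so your choice works just as well.
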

\begin{proof}
We assume that $X \subsetneq W_{X}$, otherwise the result follows directly from \cite[Theorem 2.3]{Mustata-Schenck}.

(i) Without loss of generality, we  suppose that $\supp(\cA_{X} - \cA_{W_{X}}) = \{x_{0}, \dotsc, x_{k}\}$. By Lemma \ref{Lemma:Direct sum decomposition}, it is enough to see that $\pi(\cA_{X} - \cA_{W_{X}})$ is isomorphic to $\cA_{\Gamma_{X}}$. Let $\pi(Y)$ be as in Remark \ref{Remark:Inner intersection}. Since $\pi(Y)$ is inner in $L(\pi(\cA_{X} - \cA_{W_{X}}))$, there exists $p=(a_{0}: \dotsb :a_{k}) \in \pi(Y)$ with $a_{i} \neq 0$, for all $i$. We consider the projectivity $\phi$ that
fixes the coordinate points and sends the point $p$ to the unit point $(1:\cdots:1)$. We claim that $\phi$ is the desired isomorphism. 
Indeed, we have at most one hyperplane $H_{ij}$ in $\pi(\cA_{X} - \cA_{W_{X}})$ containing $\cL_{ij}\subset \PP^{k}$. Since $\phi(H_{ij})$ passes through the unit point, then up to scalar multiplication $\phi(H_{ij})$ is defined by $x_{i}-x_{j}$.

(ii) The freeness of $\cA_{X}$ is equivalent to the freeness of $\pi(\cA_{X})$. Then, by (i) $\cA_X$ is free if and only if both $\pi(\cA_{W_{X}})$ and $\cA_{\Gamma_{X}}$ are free. Finally, by Proposition \ref{Prop:freenes graphic arrangement}, $\cA_{\Gamma_{X}}$ is free if and only if $\Gamma_{X}$ is chordal, which completes the proof.
\end{proof}

\begin{example}\rm
Let $\cA$ be the complete hypertetrahedral arrangement in $\PP^{5}$ with equation
$$f_{\cA}=x_{0}x_{1}x_{2}x_{3}x_{4}x_{5}
(x_{0}-\frac{1}{2}x_{1})(x_{0}-\frac{1}{3}x_{1})
(x_{0}+x_{2})
(x_{0}-x_{3})
(x_{0}+\frac{1}{3}x_{4})
(x_{0}-\frac{1}{2}x_{5})
$$
$$
(x_{1}+x_{2})(x_{1}+3x_{3})(x_{1}+x_{4})(x_{1}+x_{5})
(x_{2}^{2}-x_{3}^{2})(x_{2}^{2}-x_{4}^{2})(x_{2}^{2}-x_{5}^{2})
$$
$$
(x_{3}^{2}-x_{4}^{2})(x_{3}-\frac{1}{3}x_{4})(x_{3}+2x_{4})
(x_{3}^{2}-x_{5}^{2})(x_{4}^{2}-x_{5}^{2}).
$$
Consider $X_{1}=\{(1:2:-1:1:-2:2)\}$, $X_{2}:=\{x_{2}=x_{3}=x_{4}=x_{5}=0\}$ and $X_{3}=\{(1:3:0:-1:-3:0)\}$ three elements of $L(\cA)$. The localized arrangements at $X_{i}$ have equations:
$$
f_{X_{1}}=(x_{0}-\frac{1}{2}x_{1})(x_{0}+x_{2})(x_{0}-x_{3})(x_{0}-\frac{1}{2}x_{5})(x_{1}+x_{4})(x_{2}+x_{3})(x_{4}+x_{5}).
$$
$$
f_{X_{2}}=x_{2}x_{3}x_{4}x_{5}(x_{2}^{2}-x_{3}^{2})(x_{2}^{2}-x_{4}^{2})(x_{2}^{2}-x_{5}^{2})
(x_{3}^{2}-x_{4}^{2})(x_{3}-\frac{1}{3}x_{4})(x_{3}+2x_{4})
(x_{3}^{2}-x_{5}^{2})(x_{4}^{2}-x_{5}^{2}).
$$
$$
f_{X_{3}}=x_{2}x_{5}(x_{0}-\frac{1}{3}x_{1})(x_{0}+\frac{1}{3}x_{4})(x_{1}+3x_{3})(x_{3}-\frac{1}{3}x_{4})(x_{2}^{2}-x_{5}^{2}).
$$
$X_{1}$ is inner with the graph in Figure \ref{fig1}.
Notice that the cycle $(0,1,4,5)$ has no chords, so $\Gamma_{X_{1}}$ is not chordal and hence, $\cA_{X_{1}}$ is not free.
On the other hand, $X_{2}$ is an intersection of coordinate hyperplanes and by Proposition \ref{sharp}, $\cA_{X_{2}}$ is free. Finally, $W_{X_{3}}=\{x_{2}=x_{5}=0\}$ and $\Gamma_{X_{3}}$ is the graph in Figure \ref{fig2}. Since $\pi(\cA_{W_{X_{3}}})$ is an arrangement in $\PP^{1}$, it is free. However, $\cA_{X_{3}}$ is not free because the cycle $(0,1,3,4)$ in $\Gamma_{X_{3}}$ has no chord.

\begin{figure}[!htb]
   \begin{minipage}{0.48\textwidth}
     \centering
     \includegraphics[scale=0.35]{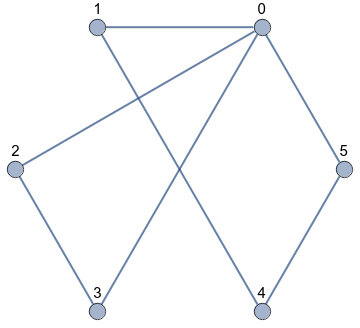}
\caption{$\Gamma_{X_{1}}$}\label{fig1}
   \end{minipage}\hfill
   \begin{minipage}{0.48\textwidth}
     \centering
     \includegraphics[scale=0.35]{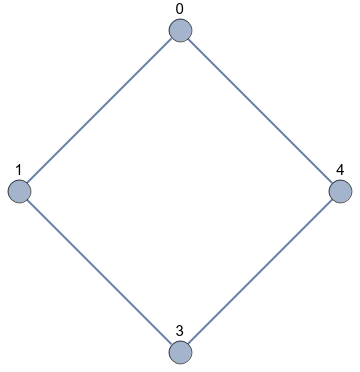}
\caption{$\Gamma_{X_{3}}$}\label{fig2}
   \end{minipage}
\end{figure}
\end{example}

As a direct consequence of Theorem \ref{Theorem: Main Locally Free} we obtain a characterization of the local freeness of complete hypertetrahedral arrangements. Namely, we express it  using only smaller dimensional complete hypertetrahedral arrangements and a graph theory property.

\begin{corollary} $\cA$ is locally free if and only if for each $X \in L(\cA)$, the complete hypertetrahedral arrangement $\cA_{W_{X}}$ is free and $\Gamma_{X}$ is chordal.
\end{corollary}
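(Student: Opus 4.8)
The plan is to obtain the corollary simply by running Theorem \ref{Theorem: Main Locally Free}(ii) over every flat of the intersection lattice, after unwinding the definition of local freeness. Recall that $\cA$ is locally free exactly when the (always reflexive) sheaf $\cT_\cA$ on $\PP^n$ is a vector bundle; by the Musta\c{t}\u{a}--Schenck characterization \cite[Theorem 3.3]{Mustata-Schenck} (the criterion recalled before Theorem \ref{Theorem: Main Locally Free}) this holds if and only if the localized arrangement $\cA_X$ is free for every $X \in L(\cA)$. Since $\cT_\cA$ is reflexive it is automatically locally free in codimension $\le 2$, so one may even restrict the quantifier to $X$ with $\codim X \ge 3$, but this does not affect the statement. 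Thus it suffices to show, flat by flat, that $\cA_X$ is free if and only if $\cA_{W_X}$ is free and $\Gamma_X$ is chordal, and then take the conjunction over all $X$.

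Fix $X \in L(\cA)$. If $X = W_X$ --- in particular if $X$ is an intersection of coordinate hyperplanes, or if $X$ is inner so that $W_X = \cE_n$ --- then by convention $\Gamma_X = \emptyset$ is chordal, while $\cA_{W_X} = \cA_X$ (resp.\ $\cA_{W_X} = \cE_n$, which is free), so the asserted equivalence is either a tautology or is \cite[Theorem 2.3]{Mustata-Schenck}. If instead $X \subsetneq W_X$, then Theorem \ref{Theorem: Main Locally Free}(ii) gives that $\cA_X$ is free if and only if $\pi(\cA_{W_X})$ is free and $\Gamma_X$ is chordal. It remains only to replace $\pi(\cA_{W_X})$ by $\cA_{W_X}$: by the Remark identifying the freeness of a subarrangement $\cA'$ of $\cA$ with that of its essentialization $\pi(\cA')$ (together with Proposition \ref{prop:freeness of product}), $\pi(\cA_{W_X})$ is free if and only if $\cA_{W_X}$ is free. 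Assembling these equivalences over all $X \in L(\cA)$ yields the corollary.

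I do not expect any genuine obstacle here: the statement is a bookkeeping consequence of Theorem \ref{Theorem: Main Locally Free}. The only points that require a careful sentence are (i) quoting the Musta\c{t}\u{a}--Schenck characterization in the exact form ``$\cA$ locally free $\Longleftrightarrow$ $\cA_X$ free for all $X\in L(\cA)$'', so that local freeness is tested precisely on the flats to which Theorem \ref{Theorem: Main Locally Free}(ii) applies; and (ii) the harmless passage between $\pi(\cA_{W_X})$ --- the complete hypertetrahedral arrangement in $\PP^{j-1}$, $j=\codim W_X$, that appears in Theorem \ref{Theorem: Main Locally Free} --- and the localized arrangement $\cA_{W_X}$ named in the corollary, which share the same freeness by the Remark. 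Once these two identifications are recorded, the proof is essentially one line.
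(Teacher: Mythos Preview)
Your proposal is correct and matches the paper's approach exactly: the paper states the corollary with no proof, treating it as an immediate consequence of Theorem \ref{Theorem: Main Locally Free} together with the Musta\c{t}\u{a}--Schenck criterion, and you have simply unpacked this.

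One small correction to your case analysis: when $X$ is inner you do \emph{not} have $X = W_X$; rather $W_X = \emptyset$ (the empty intersection of coordinate hyperplanes) with $X \subsetneq W_X$, so $\Gamma_X$ is \emph{not} declared empty by convention --- it is the graph built from all of $\cA_X$, as in the example of $X_1$ following Theorem \ref{Theorem: Main Locally Free}. In that case Theorem \ref{Theorem: Main Locally Free}(ii) applies directly with $\cA_{W_X}=\cE_n$ trivially free, and the equivalence becomes ``$\cA_X$ free $\Longleftrightarrow$ $\Gamma_X$ chordal'', which is what you want. This does not affect the validity of your argument, only the labeling of the cases.
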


We end this section with two  applications of Theorem \ref{Theorem: Main Locally Free}.

\begin{proposition} Let $X \in L(\cA)$ be inner with $\supp(X) = \{x_{i_{0}},\hdots, x_{i_{k}}\}$. Assume that there are $H_{1},\hdots, H_{k} \in L(\cA_{X})$ linearly independent hyperplanes such that $\bigcap_{l=1}^{k} \supp(H_{l}) = \{x_{i_{j}}\}$ for some  $x_{i_{j}} \in \supp(X)$. Then $\cA_{X}$ is free.
\end{proposition}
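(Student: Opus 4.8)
The plan is to specialize Theorem~\ref{Theorem: Main Locally Free} to this situation: the hypothesis is designed so that the graph $\Gamma_X$ attached to $X$ is forced to be a complete star, hence chordal.

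\textbf{Reduction.} Since $X$ is inner, $W_X=\emptyset$, so $\cA_{W_X}=\cE_n$ is the empty arrangement, which is free (one has $\Der(-\log\cE_n)=\Der_k(R)$), and $\cA_X-\cA_{W_X}=\cA_X$. By Theorem~\ref{Theorem: Main Locally Free}(ii) (with $\pi(\cA_{W_X})$ free), $\cA_X$ is free if and only if $\Gamma_X$ is chordal; equivalently, Theorem~\ref{Theorem: Main Locally Free}(i) gives $\pi(\cA_X)\cong\cA_{\Gamma_X}$, and chordality of $\Gamma_X$ is equivalent to freeness of $\cA_{\Gamma_X}$ by Proposition~\ref{Prop:freenes graphic arrangement}. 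So it suffices to prove that $\Gamma_X$ is a complete star centered at $i_j$.

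\textbf{Identification of $\Gamma_X$.} Because $X$ is inner, $\cA_X$ contains no coordinate hyperplane, so each of its hyperplanes is inner, of support size $2$; by Remark~\ref{Remark:Inner intersection}, distinct hyperplanes of $\cA_X$ have distinct supports, so $\Gamma_X$ is a simple graph on the vertex set $\{i_0,\dots,i_k\}$ whose edges are precisely these supports. From $\bigcap_{l=1}^{k}\supp(H_l)=\{x_{i_j}\}$ we get $\supp(H_l)=\{x_{i_j},x_{i_{c_l}}\}$ with $c_l\ne j$ for every $l$. The indices $i_{c_1},\dots,i_{c_k}$ are pairwise distinct: were $c_l=c_{l'}$ for some $l\ne l'$, then $H_l$ and $H_{l'}$ would be two distinct hyperplanes through the codimension-$2$ subspace $\mathcal{L}_{i_j,i_{c_l}}$, so by linear independence $H_l\cap H_{l'}=\mathcal{L}_{i_j,i_{c_l}}\subseteq\{x_{i_j}=0\}$, and $X\subseteq H_l\cap H_{l'}$ would lie in a coordinate hyperplane, contradicting innerness. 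Since $\{i_0,\dots,i_k\}\setminus\{i_j\}$ has exactly $k$ elements, $\{i_{c_1},\dots,i_{c_k}\}$ exhausts it, so $\supp(H_1),\dots,\supp(H_k)$ already form a spanning star of $\Gamma_X$ centered at $i_j$. As $H_1,\dots,H_k$ are all the hyperplanes of $\cA_X$, there is no further edge, and $\Gamma_X$ is exactly the complete star centered at $i_j$; having no cycle of length $\ge 3$, it is chordal by Definition~\ref{Definition:chordal_complete star}, which by the reduction shows that $\cA_X$ is free.

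\textbf{Expected obstacle.} The whole argument sits in the identification of $\Gamma_X$ with the complete star on $\supp(X)$ centered at $i_j$. The ``spanning'' half is the delicate one and comes from the ``no repeated leaf'' computation, which is exactly where the linear independence of the $H_l$ and the innerness of $X$ are used; the other half --- that $\Gamma_X$ carries no extra edge, in particular no chordless cycle --- is where one invokes that the $H_l$ account for the entire localized arrangement $\cA_X$.
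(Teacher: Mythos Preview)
Your reduction via Theorem~\ref{Theorem: Main Locally Free} to the chordality of $\Gamma_X$, and the identification of the spanning star on $\{i_0,\dots,i_k\}$ centered at $i_j$, track the paper's argument exactly.

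The gap is the sentence ``As $H_1,\dots,H_k$ are all the hyperplanes of $\cA_X$''. The hypothesis does not say this, and it need not hold: the proposition only assumes that \emph{among} the hyperplanes of $\cA_X$ one can find $k$ linearly independent ones whose supports all contain $x_{i_j}$. Nothing forbids further hyperplanes $H\in\cA_X$ with $x_{i_j}\notin\supp(H)$, and each such $H$ contributes an edge of $\Gamma_X$ not incident to the center $i_j$. For instance, with $n=k=4$ and $X=(1:1:1:1:1)$ in a suitable complete hypertetrahedral arrangement, $\cA_X$ can contain the four hyperplanes $x_0-x_l$ ($l=1,\dots,4$) together with $x_1-x_2$, $x_2-x_3$, $x_3-x_4$, $x_1-x_4$; here $\Gamma_X$ strictly contains the star. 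The paper's proof does not assert your equality: it records only the inclusion $\{(0,1),\dots,(0,k)\}\subset E_X$ and concludes chordality from that. Your ``Expected obstacle'' paragraph correctly isolates this as the crux, but then dispatches it by an assertion that is neither given nor derivable from the hypothesis, so the argument as written does not close.
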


\begin{proof} Without loss of generality, we can assume that $\supp(X) = \{x_{0},\hdots, x_{k}\}$ and $\bigcap_{l=1}^{k} \supp(H_{l}) = \{x_{0}\}$.
By Theorem \ref{Theorem: Main Locally Free}, $\cA_{X}$ is free if and only if $\Gamma_{X}$ is chordal. The hypothesis implies that we can consider $H_{1},\hdots, H_{k}$ such that $\supp(H_{i}) = \{x_{0},x_{i}\}$. Hence
$\{(0,1), \hdots, (0,k)\} \subset E_{X}$, which is a complete star (see Definition \ref{Definition:chordal_complete star}) and then the graph is chordal.
\end{proof}

\begin{proposition} Let $n \geq 3$ and $\cA \in \cH(s_{i,j})$. If $\cA$ is general, then $\cA$ is not locally free.
\end{proposition}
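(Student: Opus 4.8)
The plan is to use Theorem \ref{Theorem: Main Locally Free} to reduce local freeness of a general complete hypertetrahedral arrangement $\cA$ to the existence of a suitable non-chordal graph $\Gamma_X$ among the linear subspaces $X \in L(\cA)$. Since $n \geq 3$, the key observation is that one can exhibit an inner point $X \in L(\cA)$ whose associated graph $\Gamma_X$ contains an induced $4$-cycle, which is not chordal; by part (ii) of the theorem, this forces $\cA_X$ to be non-free, hence $\cA$ is not locally free.

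The main steps I would carry out are as follows. First, using that $\cA$ is general — every intersection outside the $n$-simplex has minimal dimension — I would argue that if we pick four indices, say $0,1,2,3$, and one inner hyperplane through each of $\cL_{0,1}$, $\cL_{1,2}$, $\cL_{2,3}$, $\cL_{0,3}$ (which exist since $s_{i,j}\geq 1$ for all pairs), then the four hyperplanes $H_{0,1}, H_{1,2}, H_{2,3}, H_{0,3}$ meet in a codimension-$3$ (or lower-dimensional, but by generality exactly the expected codimension) linear subspace $X$, whose support is exactly $\{x_0,x_1,x_2,x_3\}$. Concretely, each $H_{l,m}$ has equation $x_l - \lambda_{l,m} x_m$ with all $\lambda$'s nonzero, and one checks that the intersection $X$ of the four is a point (or small linear space) not contained in any coordinate hyperplane, i.e. inner, as long as the $\lambda$'s are generic — which is exactly what "general" guarantees. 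Second, I would verify $W_X = \emptyset$ (so $\cA_X - \cA_{W_X} = \cA_X$) and that the localized arrangement $\cA_X$ consists of precisely these inner hyperplanes plus possibly others through the $\cL_{i,j}$ with $i,j \in \{0,1,2,3\}$; by generality, the point $X$ lies on no further inner hyperplane, so $\cA_X = \{H_{0,1},H_{1,2},H_{2,3},H_{0,3}\}$ and $\Gamma_X$ is the $4$-cycle $(0,1,2,3)$. Third, since a $4$-cycle has no chord, $\Gamma_X$ is not chordal, and Theorem \ref{Theorem: Main Locally Free}(ii) gives that $\cA_X$ is not free; since local freeness requires $\cA_X$ to be free for every $X \in L(\cA)$ (the Musta\c{t}\u{a}–Schenck criterion, as encoded in the corollary above), $\cA$ is not locally free.

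The delicate point — the main obstacle — is to pin down exactly what "general" buys us: one needs that four inner hyperplanes $H_{0,1},H_{1,2},H_{2,3},H_{0,3}$ (one from each of the four "sides" of the $4$-cycle) can be chosen so that (a) their common intersection $X$ has the expected codimension and (b) $X$ is not contained in any further hyperplane of $\cA$, and in particular lies on no coordinate hyperplane. Part (a) is automatic from the definition of general (intersections outside the simplex have minimal dimension), but (b) requires a short transversality-type argument: the $n+1$ coordinate hyperplanes, together with any fifth inner hyperplane, would cut $X$ in something of strictly smaller dimension, again forced by generality, so $X$ avoids all of them. Here one must be slightly careful because $X$ could a priori be forced onto a coordinate hyperplane if, say, $x_0 - \lambda_{0,1} x_1$, $x_1 - \lambda_{1,2}x_2$, $x_2-\lambda_{2,3}x_3$, $x_0-\lambda_{0,3}x_3$ have a "cyclic compatibility" $\lambda_{0,1}\lambda_{1,2}\lambda_{2,3} = \lambda_{0,3}$ — but this is a codimension-one condition on the coefficients which the generality hypothesis excludes, making $X$ genuinely lower-dimensional and inner. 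Once this is set up, the rest is a direct invocation of the theorem and the chordality criterion, so I expect the write-up to be short.
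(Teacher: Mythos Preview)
Your approach has a genuine and fatal gap, and in fact the mechanism you are trying to exploit is exactly the one that generality \emph{rules out}.

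You want four inner hyperplanes $H_{0,1},H_{1,2},H_{2,3},H_{0,3}$, with equations $x_0-\lambda_{0,1}x_1,\;x_1-\lambda_{1,2}x_2,\;x_2-\lambda_{2,3}x_3,\;x_0-\lambda_{0,3}x_3$, to meet in an inner $X$ with $\Gamma_X$ the $4$-cycle. Compute the $4\times4$ determinant of these forms in $x_0,\dots,x_3$: it equals $\lambda_{0,1}\lambda_{1,2}\lambda_{2,3}-\lambda_{0,3}$. Thus the four forms are linearly independent precisely when the cyclic compatibility $\lambda_{0,1}\lambda_{1,2}\lambda_{2,3}=\lambda_{0,3}$ \emph{fails}. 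In that case their common zero locus is $\{x_0=x_1=x_2=x_3=0\}$, which is empty for $n=3$ and a coordinate face for $n\ge4$; either way $X$ is not inner and $\Gamma_X$ is not the $4$-cycle. Conversely, the only way to get an inner $X$ with this $\Gamma_X$ is for the compatibility to \emph{hold}, and that is a codimension-one condition that the ``general'' hypothesis excludes. You diagnosed the right condition but reversed its role. More generally, for any inner $X$ in a general hypertetrahedral arrangement, each cycle in $\Gamma_X$ would impose a multiplicative relation among the coefficients, so $\Gamma_X$ is forced to be a forest, hence always chordal. Non-local-freeness therefore cannot be detected through $\Gamma_X$ at all in the general case.

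The paper's proof works on the other factor in Theorem~\ref{Theorem: Main Locally Free}(ii): it localizes at the vertex $X=e_n$, so that $W_X=X$, $\Gamma_X=\emptyset$, and $\pi(\cA_{W_X})$ is a general complete hypertetrahedral arrangement in $\PP^{n-1}$. The base case $n=3$ is handled by showing that a general complete triangular arrangement in $\PP^2$ is not free (using \cite[Proposition~2.2]{MV}: there are no inner triple points, which forces some $s_{i,j}=0$ if the arrangement were free); the induction step simply observes that if $\pi(\cA_{W_X})$ is not even locally free it is certainly not free. So the obstruction lives in $\cA_{W_X}$, not in $\Gamma_X$.
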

\begin{proof} We proceed by induction on $n$. For $n = 3$, we set $X = \{(0:0:0:1)\}$. Then $\pi(\cA_{X}) = \pi(\cA_{W_{X}})\in \cH(s_{0,1},s_{0,2},s_{1,2})$ is a complete triangular arrangement in $\PP^{2}$, which is also general. By \cite[Proposition 2.2]{MV}, we have the following exact sequence:
$$
0 \longrightarrow \mathcal{T}_{\pi(\cA_{X})} \longrightarrow \bigoplus_{0 \leq i < j \leq 2} \mathcal{O}(-s_{i,j} -1) \longrightarrow I_{G}(-1) \longrightarrow 0, $$
where $G$ is the set of inner triple points of $\pi(\cA_{X})$. Since $\pi(\cA_{X})$ is also general,  $G = \emptyset$. If  $\mathcal{T}_{\pi({\cA}_{X})}$ is free, then necessarily some $s_{i,j}$ equals to $0$, which is a contradiction. Therefore by Theorem \ref{Theorem: Main Locally Free}, $\cA$ is not locally free.

Now we assume that the result holds in $\PP^{n-1}$ for $n \geq 4$. We localize $\cA$ at $X=\{(0: \hdots :0:1)\}$, as before $\pi(\cA_{X}) = \pi(\cA_{W_{X}})$ is a generic complete hypertetrahedral arrangement in $\PP^{n-1}$ with $s_{i,j} \geq 1$ for $0\leq i < j \leq n-1$. By induction, $\pi(\cA_{X})$ is not locally free, hence not free and the result follows from Theorem \ref{Theorem: Main Locally Free}.
\end{proof}

We remark that not all locally free complete hypertetrahedral arrangements are free, as the following example shows.

\begin{example} \rm Let $\cA$ be the complete hypertetrahedral arrangement in $\PP^{3}$ with defining equation
$$f_{\cA} = x_{0}x_{1}x_{2}x_{3}(x_{0}-x_{1})(x_{0}-2x_{1})(x_{0}-x_{2})
(x_{0}-x_{3})(x_{1}-x_{2})(x_{1}-x_{3})(x_{2}^{2}-x_{3}^{2}).$$
Applying Theorem \ref{Theorem: Main Locally Free}, it is straightforward to check that $\cA$ is locally free. Using the software Macaulay2 (\cite{M2}), we compute a minimal free resolution of $J_{\cA}$:
$$0 \to R(-7) \to R(-5) \oplus R(-6)^{3} \to R(-4)^{3} \oplus R(-5)^{3} \to R^{4} \to J_{\cA}(11) \to 0,$$
which shows that $\cA$ is not free.
\end{example}


\section{Jacobian ideal of hypertetrahedral arrangements}
\label{JacobianIdeal}

\vspace{0.3cm}
The goal of this section is to establish lower and upper bounds for the initial degree of the syzygy module associated to any complete hypertetrahedral arrangement in terms of the values $s_{i,j}$. Given $\cA \in \mathcal{H}(s_ {i,j})$, we define $0 \leq i_0 \leq n$ to be an integer such that
$$\sum _{\substack{j=0 \\ j\ne i_0}}^ns_{i_0,j} = \min\{\sum _{\substack{j=0\\ j\ne i}}^ns_{i,j}, \;\; 0\le i \le n\}.$$
We have the following result. 

\begin{proposition}
\label{Dimcan}
 With the above notation, the Jacobian ideal $J_{\cA}$ of  any  complete hypertetrahedral arrangement $\cA \in \mathcal{H}(s_ {i,j})$  has a syzygy of degree $\sum _{\substack{j=0 \\ j \neq i_0}}^{n}s_{i_0,j}+1$. In particular,
$$\indeg(\syz(J_{\cA}))\le \sum _{\substack{j=0 \\ j \neq i_0}}^{n} s_{i_0,j}+1.$$
\end{proposition}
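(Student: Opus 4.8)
The plan is to produce, for every index $i$, an explicit logarithmic derivation of degree $\sum_{j\neq i}s_{i,j}+1$ which is not an $R$-multiple of the Euler derivation $\theta_E$; feeding this into the decomposition $\Der(-\log\cA)=R\cdot\theta_E\oplus\syz(J_\cA)$ produces a syzygy of that degree, and taking $i=i_0$ yields the stated bound.

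After relabelling the coordinates we may assume $i_0=0$. Put $D:=\sum_{j=1}^{n}s_{0,j}+1$ and
$$\psi:=\prod_{j=1}^{n}\prod_{r=1}^{s_{0,j}}L_{0,j}^{r}\ \in\ R_{D-1},\qquad \theta:=x_0\,\psi\,\partial_{x_0},$$
a derivation of degree $D$. To see that $\theta\in\Der(-\log\cA)$ it suffices, by the Leibniz rule applied to $f_{\cA}=\prod_{H\in\cA}\ell_H$, to check that $\ell_H\mid\theta(\ell_H)$ for the linear form $\ell_H$ of each hyperplane $H\in\cA$. If $\ell_H$ does not involve $x_0$ — that is, $H=L_i$ with $i\geq 1$, or $H$ is an inner hyperplane $L_{i,j}^{r}$ with $1\leq i<j$ — then $\theta(\ell_H)=0$. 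For $H=L_0$ one has $\theta(x_0)=x_0\psi\in(x_0)$. Finally, for $H=L_{0,j}^{r}$ one gets $\theta(L_{0,j}^{r})=a_0^{(r;0,j)}x_0\psi$, which lies in $(L_{0,j}^{r})$ because $L_{0,j}^{r}$ is one of the linear factors of $\psi$. Hence $\theta(f_{\cA})\in(f_{\cA})$.

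It remains to transfer $\theta$ to $\syz(J_{\cA})$. Write $\theta=h\,\theta_E+\sigma$ according to $\Der(-\log\cA)=R\cdot\theta_E\oplus\syz(J_{\cA})$, with $h\in R_{D-1}$ and $\sigma\in\syz(J_{\cA})_{D}$. Since the $x_1$-coefficient of $\theta$ is zero, the $x_1$-coefficient of $\sigma$ equals $-hx_1$; if $\sigma=0$ then $h=0$ and therefore $\theta=0$, contradicting $\psi\neq 0$. Thus $\sigma$ is a nonzero syzygy of $J_{\cA}$ of degree $D$, so $J_{\cA}$ has a syzygy of degree $\sum_{j\neq i_0}s_{i_0,j}+1$ and, consequently, $\indeg(\syz(J_{\cA}))\leq\sum_{j\neq i_0}s_{i_0,j}+1$.

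The argument is a direct construction and uses nothing beyond the splitting $\Der(-\log\cA)=R\cdot\theta_E\oplus\syz(J_{\cA})$; the only points that require a little care are the hyperplane-by-hyperplane verification of the logarithmic condition and the passage to $\Der(-\log\cA)_0\cong\syz(J_{\cA})$, which is what forces $\sigma\neq 0$ and pins down its degree.
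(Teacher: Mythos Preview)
Your proof is correct and is essentially the same construction as the paper's. The paper writes $f_{\cA}=gh$ with $h=x_0\psi$ (in your notation), computes $h\,\partial_{x_0}f_{\cA}=Pf_{\cA}$ for an explicit $P$ of degree $D-1$, and then combines this with the Euler relation to obtain the syzygy $(x_0P-\deg(f_{\cA})h,\,Px_1,\dotsc,Px_n)$; this is precisely the projection of your derivation $\theta=x_0\psi\,\partial_{x_0}=h\,\partial_{x_0}$ onto $\syz(J_{\cA})$. Your hyperplane-by-hyperplane verification of the logarithmic condition replaces the paper's explicit computation of $P$, but the underlying derivation and the resulting syzygy are identical.
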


\begin{proof} To simplify the notation, we assume that $i_0 = 0$. Set $d=n+1+\sum _{0 < i \le n}s_{0,i}$ and take $$\cA =\{x_0,x_1,\hdots ,x_n,L_{0,1}^{1}, \hdots,L_{0,1}^{s_{0,1}}, \hdots,L_{n-1,n}^{1}, \hdots,L_{n-1,n}^{s_{n-1,n}}  \}.$$  We write the defining equation of $\cA$ as $f_{\cA}=gh$, where $g=x_1\cdots x_{n}\prod _{0 < i<j\le n}\prod _{1\le t\le s_{i,j}}L_{i,j}^t$ is a polynomial of degree $n+\sum _{0 <  i<j\le n}s_{i,j}$ and $h=x_0\prod_{0 < i\le n}\prod _{1\le t\le s_{0,i}}L_{0,i}^t$ is a polynomial of degree $1+\sum _{0 < i\le n}s_{0,i}$. To simplify, we write $h=\prod _{i=1}^{\alpha }L_i$ where $\alpha =1+\sum _{0 < i\le n}s_{0,i}$ and we denote by $a_{L_{i}}$ the coefficient of $x_0$ in $L_i$. On has
$$\partial_{x_{0}} f_{\cA}=g \partial_{x_{0}} h =gh\sum _{i=1}^{\alpha }\frac{a_{L_i}}{L_{i}}=f_{\cA }\sum _{i=1}^{\alpha }\frac{a_{L_i}}{L_{i}}=
f_{\cA} \frac{P}{h},$$
where $P$ is a polynomial of degree  $\sum _{0 < i\le n}s_{0,i}$ such that $\gcd(P,h)=1$. Therefore we obtain
 $$dh\partial_{x_{0}} f_{\cA}=dPf_{\cA}=P(\sum _{i=0}^nx_i \partial_{x_{i}} f_{\cA})$$
 or equivalently,
 $$(x_0P-dh)\partial_{x_{0}} f_{\cA}+P(\sum _{i=1}^n x_i \partial_{x_{i}} f_{\cA})=0$$
 as we wanted to prove. 
\end{proof}

The following result shows that the above upper bound for the initial degree of the first syzygy module of the Jacobian ideal is sharp for complete triangular arrangements. 

\begin{proposition}
Let $\cA \in \cH(s_{i,j})$ be a complete general triangular arrangement with $s_{0,1} \leq s_{0,2} \leq s_{0,3}$. Then $\indeg(J_{\cA}) = s_{0,1}+s_{0,2}+1$. 
\end{proposition}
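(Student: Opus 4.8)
The plan is to identify $\syz(J_{\cA})$ with the module of global sections of $\cT_{\cA}$, to present $\cT_{\cA}$ via the exact sequence of \cite[Proposition 2.2]{MV}, and to compute the resulting initial degree through the Koszul complex; the upper bound is then exactly Proposition \ref{Dimcan}. Write $R=k[x_{0},x_{1},x_{2}]$ and let $\mathfrak{m}=(x_{0},x_{1},x_{2})$. The ordering hypothesis on the $s_{i,j}$ $(0\le i<j\le 2)$ says precisely that $s_{0,1}+s_{0,2}=\min\{s_{0,1}+s_{0,2},\,s_{0,1}+s_{1,2},\,s_{0,2}+s_{1,2}\}$, equivalently that one may take $i_{0}=0$ in Proposition \ref{Dimcan}; thus $\indeg(\syz(J_{\cA}))\le s_{0,1}+s_{0,2}+1$ and only the reverse inequality has to be proved.

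First I would reduce to cohomology. By \cite{S} the module $\Der(-\log\cA)$ is reflexive, and by \cite[Proposition 2.4]{Terao} one has $\Der(-\log\cA)=R\cdot\theta_{E}\oplus\syz(J_{\cA})$; being a direct summand of a reflexive module, $\syz(J_{\cA})$ is itself reflexive, hence $H^{0}_{\mathfrak{m}}(\syz(J_{\cA}))=H^{1}_{\mathfrak{m}}(\syz(J_{\cA}))=0$ (reflexivity implies Serre's condition $S_{2}$). Since $\cT_{\cA}$ is by definition the sheafification of $\Der(-\log\cA)_{0}\cong\syz(J_{\cA})$, this yields a graded isomorphism $\syz(J_{\cA})\cong\bigoplus_{t\in\ZZ}H^{0}(\PP^{2},\cT_{\cA}(t))$, and therefore $\indeg(\syz(J_{\cA}))=\min\{t\in\ZZ \, \mid \, H^{0}(\PP^{2},\cT_{\cA}(t))\neq 0\}$.

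Next I would apply \cite[Proposition 2.2]{MV}. As $\cA$ is a complete \emph{general} triangular arrangement it has no inner triple point, so the set $G$ appearing there is empty and we get a short exact sequence
$$0\longrightarrow\cT_{\cA}\longrightarrow\bigoplus_{0\le i<j\le 2}\mathcal{O}_{\PP^{2}}(-s_{i,j}-1)\longrightarrow\mathcal{O}_{\PP^{2}}(-1)\longrightarrow 0,$$
whose right-hand map is given by a triple $(\phi_{0,1},\phi_{0,2},\phi_{1,2})$ of forms with $\phi_{i,j}\in R_{s_{i,j}}$. Surjectivity means that $\phi_{0,1},\phi_{0,2},\phi_{1,2}$ have no common zero in $\PP^{2}$, i.e. the ideal $(\phi_{0,1},\phi_{0,2},\phi_{1,2})$ is $\mathfrak{m}$-primary; hence $\phi_{0,1},\phi_{0,2},\phi_{1,2}$ is a homogeneous system of parameters of the Cohen--Macaulay ring $R$, and therefore a regular sequence. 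Twisting by $\mathcal{O}_{\PP^{2}}(t)$ and taking global sections identifies $H^{0}(\PP^{2},\cT_{\cA}(t))$ with the graded component in degree $t-1$ of the first syzygy module
$$S:=\{(A,B,C)\in R(-s_{0,1})\oplus R(-s_{0,2})\oplus R(-s_{1,2}) \, \mid \, A\phi_{0,1}+B\phi_{0,2}+C\phi_{1,2}=0\}$$
of $(\phi_{0,1},\phi_{0,2},\phi_{1,2})$. Since these forms are a regular sequence, the Koszul complex on them is a minimal free resolution, so $S$ is minimally generated by the three Koszul relations $(\phi_{0,2},-\phi_{0,1},0)$, $(\phi_{1,2},0,-\phi_{0,1})$, $(0,\phi_{1,2},-\phi_{0,2})$, which lie in degrees $s_{0,1}+s_{0,2}$, $s_{0,1}+s_{1,2}$, $s_{0,2}+s_{1,2}$ respectively. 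By the hypothesis $\indeg(S)=s_{0,1}+s_{0,2}$, so $S_{d}=0$ for $d<s_{0,1}+s_{0,2}$ while $S_{s_{0,1}+s_{0,2}}\neq 0$. Combined with the previous paragraph this gives $H^{0}(\PP^{2},\cT_{\cA}(t))=S_{t-1}=0$ for $t\le s_{0,1}+s_{0,2}$ and $H^{0}(\PP^{2},\cT_{\cA}(s_{0,1}+s_{0,2}+1))\neq 0$, whence $\indeg(\syz(J_{\cA}))=s_{0,1}+s_{0,2}+1$.

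The care is in the two identifications: that $\syz(J_{\cA})$ equals $\bigoplus_{t}H^{0}(\PP^{2},\cT_{\cA}(t))$, which rests on its reflexivity, and the translation of the sheaf-theoretic sequence of \cite[Proposition 2.2]{MV} into a genuine graded syzygy module over $R$. After that, recognizing $\phi_{0,1},\phi_{0,2},\phi_{1,2}$ as a regular sequence --- which is exactly what the ``general'' hypothesis provides --- reduces the statement to pure Koszul bookkeeping; the only minor subtlety is that for small $t$ some of $t-s_{0,1}-1$, $t-s_{0,2}-1$, $t-s_{1,2}-1$ are negative, forcing the corresponding coordinate of a relation to vanish, but this never produces a relation of degree below $s_{0,1}+s_{0,2}$.
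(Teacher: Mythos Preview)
Your proof is correct and takes a genuinely different route from the paper's. The paper argues by induction on $s_{0,1}$: the base case $s_{0,1}=0$ is read off from Theorem \ref{triang}(i), and for the inductive step one removes an inner line $l$ through $\mathcal{L}_{0,1}$, uses the deletion exact sequence of \cite[Propositions 5.1--5.2]{FV}, dualizes and twists to obtain $0\to\cT_{\cA-l}(s_{1}+s_{2}-1)\to\cT_{\cA}(s_{1}+s_{2})\to\cO_{l}(s_{1}-s_{3}-1)\to 0$, and concludes $H^{0}(\cT_{\cA}(s_{1}+s_{2}))=0$ from the induction hypothesis and $s_{1}\le s_{3}$. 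Your argument instead invokes \cite[Proposition 2.2]{MV} once, observes that for a general arrangement the scheme $G$ is empty so the presentation map is given by a regular sequence $(\phi_{0,1},\phi_{0,2},\phi_{1,2})$, and reads the initial degree directly off the Koszul complex. This is cleaner: it avoids induction, does not need the external input from \cite{Dimca16} for the base case, and in fact yields the full splitting type $\cT_{\cA}\cong\cO(-s_{0,1}-s_{0,2}-1)\oplus\cO(-s_{0,1}-s_{1,2}-1)\oplus\cO(-s_{0,2}-s_{1,2}-1)\big/\cO(-s_{0,1}-s_{0,2}-s_{1,2}-1)$ of the syzygy module at no extra cost. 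The paper's approach, on the other hand, illustrates the deletion technique in a way that would adapt to situations where $G\neq\emptyset$ and no such regular sequence is available.
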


\begin{proof} To simplify the notation we set $s_{0,1} = s_{1}, s_{0,2} = s_{2}$ and $s_{1,2} = s_{3}$. We proceed by induction on $s_{1}$. 
The initial case $s_{1} = 0$ follows directly from Theorem \ref{triang}(i). 

Let $s_{1} \geq 1$ and we assume that the result is true in $\cH(s_{1}-1,s_{2},s_{3})$, for any integers $s_{1}-1 \leq s_{2} \leq s_{3}$. Let $\cA \in \cH(s_1,s_2,s_3)$ be a complete general triangular arrangement and let $l$ be any non coordinate line passing through $L_{0,1}$. Since $l$ has no multiple points except from the vertex, we dualize the exact sequence $0 \to \cT_{\cA} \to \cT_{\cA-l} \to 
\mathcal{O}_{l}(-s_1) \to 0$ (see \cite[Proposition 5.1 and 5.2]{FV}) and then we obtain 
$$0 \to (\cT_{\cA-l})^{\vee} \to (\cT_{\cA})^{\vee} \to 
\mathcal{O}_{l}(s_1+1) \to 0.$$
Given that $\cT_{\cA-l}$ and $\cT_{\cA}$ have both rank $2$, we get equivalently 
$$0 \to \cT_{\cA-l}(s_1+s_2+s_3+1) \to \cT_{\cA}(s_1+s_2+s_3+2) \to 
\mathcal{O}_{l}(s_1+1) \to 0.$$
After applying $\otimes \mathcal{O}_{\PP^{2}}(-s_3-2)$, we obtain
$$0 \to \cT_{\cA-l}(s_1 + s_2 -1) \to \cT_{\cA}(s_1 + s_2) \to \mathcal{O}_{l}(s_1-s_3-1) \to 0.$$ 
Notice that $\mbox{H}^{0}(\mathcal{O}_{l}(s_1 - s_3 -1)) = 0$, indeed 
$s_{1} \leq s_{3}$. Since $\cA\setminus \{l\}$ is also general, by induction we have $\mbox{H}^{0}(\cT_{\cA-l}(s_1+s_2-1)) = 0$, which implies that $\mbox{H}^{0}(\cT_{\cA}(s_1+s_2)) = 0$. Therefore, $\indeg(J_{\cA}) \geq s_1+s_2+1$ and by Proposition \ref{Dimcan}, the result follows. 
\end{proof}

Let us see a couple of examples that illustrate Proposition \ref{Dimcan}. 
\begin{example} \rm \label{extendedFermat}  (i) With the above notation, assume $s_1+s_2 \leq s_3$ and set $\rho$ a primitive root of unity of order $s_3$. The complete triangular arrangement $\cA \in \cH(s_1,s_2,s_3)$ with defining equation
$$x_0x_1x_2\prod _{j=0}^{s_1-1}(x_0-{\rho}^{j}x_1)\prod _{j=0}^{s_2-1}(x_0-\rho^j x_2) \prod _{j=0}^{s_3-1}(x_1-\rho^j x_2)$$
is free with exponents $(1,s_1+s_2+1,s_3)$ (see \cite[Remark 2.7]{MV}).

(ii) The {\em extended Fermat arrangement} has defining equation $x_0x_1\cdots x_n\prod _{0\le i<j \le n}(x_i^a-x_j ^a)$ and it
is free with exponents $(1, a+1, 2a+1, \hdots , na+1)$. Since all $s_{i,j} = a$, we can take $i_0 = 0$ and we have $\sum _{0 < i\le n}s_{0,i}+1=na+1$.
\end{example}

Next we establish a lower bound for the initial degree of $\syz(J_{\cA})$. It turns out to be  a sharp lower bound for a large families of complete hypertetrahedral arrangements and for any triangular arrangement. We need to introduce some new notations.

Fix integers $i_0,q_0$ with $0 \leq i_0 < q_0 \leq n$. We denote by $T^{i_0,q_0}$ the set  of  all possibles $(n-1)$-uples $((i_0,j_0,q_0),(i_1,j_1,q_1), \hdots, (i_{n-2},j_{n-2},q_{n-2}))$ of triples $(i_m,j_m,q_m)$ of integers with $0 \leq i_{m},j_{m},q_{m} \leq n$ such that $j_{0} \notin \{i_0, q_{0}\}$ and, for each $1 \leq m \leq n-2$, the following two  conditions are satisfied:

\begin{itemize}
\item[(i)] $j_{m} \notin \{i_0,\hdots, i_{m-1}, j_{0}, \hdots, j_{m-1}, q_0, \hdots, q_{m-1}\}$,
\item[(ii)] $i_{m}  <  q_{m}$ and $i_{m},q_{m} \in \{i_0,\hdots, i_{m-1}, j_{0}, \hdots, j_{m-1}, q_0, \hdots, q_{m-1}\}$.
\end{itemize}

\noindent Each $(n-1)$-uple $v \in T^{i_0,q_0}$ uniquely determines a set
$$S^{i_0,q_0}_{v}:= \{s_{i_0,q_0}, s_{i_0,j_0}+ s_{j_0,q_0}, s_{i_1,j_1} + s_{j_1,q_1}, \hdots , s_{i_{n-2},j_{n-2}} + s_{j_{n-2},q_{n-2}}\},$$

\noindent and we define $m^{i_0,q_0}_{v} := \min \; S^{i_0,q_0}_{v}$ and $M^{i_0,q_0} := \max_{v \in T^{i_0,q_0}}\{m^{i_0,q_0}_{v}\}$.

Before presenting our result, let us illustrate the above notation with a couple of examples.

\begin{example}\rm (i) Let us fix $n = 2$ and $\cA$ a complete triangular arrangement in $\PP^2$. For
$i_0 = 1$ and $q_0 = 2$, $T^{1,2} = \{(1,0,2)\}$ and $M^{1,2} = \min\{s_{1,2},s_{1,0}+s_{0,2}\}$.

(ii) In $\PP^3$ consider the complete hypertetrahedral arrangement $\cA$ with defining equation $xyzt(x-y)(x-2z)(x^2-t^2)(y^3-z^3)(y^2-t^2)(z^3-t^3)$. Then $s_{0,1} = s_{0,2} = 1 $, $s_{0,3}  = s_{1,3} = 2$ and $s_{1,2}=s_{2,3} = 3$. It  follows directly that $M^{0,1} = M^{0,2} = 1$ and $M^{0,3} = M^{1,3} = 2$. We focus on determining $M^{1,2}$ and $M^{2,3}$. For $i_0 = 1$ and $q_0 = 2$, we have

$T^{1,2} = \{((1,0,2),(1,3,2)),((1,0,2),(0,3,1)),((1,0,2),(0,3,2)),((1,3,2),(1,0,3)), ((1,3,2), $ $(2,0,3))\},$

$$m_{v}^{1,2} = \begin{cases}
2, \quad \quad v \in T^{1,2} -\{(((1,3,2),(1,0,3)), ((1,3,2),(2,0,3))\}\\
3, \quad \quad \text{otherwise}.
\end{cases}$$
Thus $M^{1,2} = 3$. Analogously we compute $M^{2,3}$.

$T^{2,3} = \{((2,0,3),(2,1,3)),((2,0,3),(0,1,2)),((2,0,3),(0,1,3)),((2,1,3),(1,0,2)),((2,1,3), $ $(1,0,3))\}.$
$$m_{v}^{2,3} = \begin{cases}
2, \quad \quad v = ((2,1,3),(1,0,2))\\
3, \quad \quad \text{otherwise}.
\end{cases}$$
Hence $M^{2,3} = 3$.
\end{example}

\begin{theorem}
\label{ThmBound} Fix an integer $n\ge 2$ and $\cA \in \mathcal{H}(s_ {i,j})$. Let $D := \max_{0 \leq i_0 < q_0 \leq n}\{M^{i_0,q_0}\}$.  Then for all $1 \leq d \leq D$
$$\Der(-\log \cA )_d \subset R_{d-1}\theta_{E}.$$
\end{theorem}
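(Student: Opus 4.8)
The goal is to show that every logarithmic derivation of $\cA$ of degree $d\le D$ is a polynomial multiple of the Euler derivation; equivalently, that $\Der(-\log\cA)_0$ — or the sheaf $\cT_\cA$ — has no nonzero global sections in degrees $1,\hdots,D$. I would phrase everything in terms of $\syz(J_\cA)$: a degree-$d$ element of $\Der(-\log\cA)$ not in $R_{d-1}\theta_E$ is the same as a nonzero syzygy $\sum_{i=0}^n\theta_i\partial_{x_i}(f_\cA)=0$ of degree $d$, i.e. a relation $\sum_i\theta_i\,(\partial_{x_i}f_\cA)=0$ with $\theta_i\in R_d$ not all proportional via $\theta_i=cx_i$. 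The plan is to assume such a nonzero syzygy $\theta=\sum\theta_i\partial_{x_i}$ of minimal degree $d\le D$ exists and derive a contradiction by exploiting the very rigid divisibility structure forced by the coordinate hyperplanes and the inner hyperplanes through each $\cL_{i,j}$.

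\textbf{Step 1: restrict to coordinate hyperplanes.} Since $x_0,\hdots,x_n\in\cA$, $\theta(f_\cA)\in(f_\cA)$ forces $\theta(x_i)=\theta_i\in(x_i)$ for each $i$ (a standard Saito-type computation: $x_i\mid f_\cA$ and the other factors are coprime to $x_i$, so $x_i\mid\theta_i$). Thus we may write $\theta_i=x_i g_i$ with $g_i\in R_{d-1}$. Subtracting $g_0\theta_E$ we may further normalize $g_0=0$, i.e. assume $\theta_0=0$; the hypothesis that $\theta\notin R_{d-1}\theta_E$ becomes the assertion that not all $g_i$ are equal to a common constant, and after the normalization it becomes: some $g_i\ne 0$.

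\textbf{Step 2: restrict to the inner hyperplanes.} For each pair $i<j$ and each inner hyperplane $L_{i,j}^r=a_i x_i+a_j x_j$ with $a_i,a_j\ne0$, the condition $L_{i,j}^r\mid\theta(f_\cA)$ gives, modulo $L_{i,j}^r$, a divisibility constraint relating $\theta_i$ and $\theta_j$; substituting $x_i\equiv -\tfrac{a_j}{a_i}x_j$ one gets that $a_i\theta_i+a_j\theta_j\equiv 0\pmod{L_{i,j}^r}$, i.e. $L_{i,j}^r\mid (a_i x_i g_i+a_j x_j g_j)$. Since there are $s_{i,j}$ such inner hyperplanes, all distinct, and each is coprime to $x_i x_j$, this says: $\prod_{r=1}^{s_{i,j}}L_{i,j}^r$ divides $a_i x_i g_i+a_j x_j g_j$ evaluated appropriately — more precisely, on the line $\cL_{i,j}$-complement one concludes that $g_i-g_j$, viewed in the quotient by the ideal generated by the $L_{i,j}^r$, vanishes to high order. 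The cleanest way is: $a_i x_i g_i + a_j x_j g_j \equiv x_j(a_i(-\tfrac{a_j}{a_i})g_i + a_j g_j)\pmod{L_{i,j}^r}$, giving $L_{i,j}^r\mid (g_j-g_i)$ up to a unit; so $\prod_r L_{i,j}^r \mid (g_j-g_i)$ for each pair, hence $g_j-g_i$ is divisible by a product of $s_{i,j}$ pairwise-independent linear forms in $x_i,x_j$.

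\textbf{Step 3: propagate along chains of pairs and count degrees.} Here is where the combinatorial quantity $M^{i_0,q_0}$ enters. Fix a pair $(i_0,q_0)$. For any admissible $(n-1)$-uple $v\in T^{i_0,q_0}$ one builds a chain: $g_{q_0}-g_{i_0}$ is divisible by $s_{i_0,q_0}$ independent forms in $x_{i_0},x_{q_0}$; also $g_{j_0}-g_{i_0}$ and $g_{q_0}-g_{j_0}$ carry divisibilities of degrees $s_{i_0,j_0}$ and $s_{j_0,q_0}$ respectively, and since these linear-form products involve genuinely different variable-sets they are coprime, so $g_{q_0}-g_{i_0}$ acquires divisibility of degree $\ge\min\{s_{i_0,q_0},\,s_{i_0,j_0}+s_{j_0,q_0}\}$ along two independent variable-directions simultaneously — which for a polynomial of degree $d-1$ in finitely many variables forces, after iterating over all $n-2$ further triples in $v$, that $\deg(g_{q_0}-g_{i_0})\ge m^{i_0,q_0}_v$ unless $g_{q_0}=g_{i_0}$. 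Taking the best chain gives: if $d-1<M^{i_0,q_0}$ then $g_{i_0}=g_{q_0}$. Doing this for every pair, and using that the ``pair-connectedness'' graph on $\{0,\hdots,n\}$ is complete, forces all $g_i$ equal; being a common polynomial of degree $d-1$ with $\theta_0=0$ we then get $g_i\equiv c$, and then $\theta=c\theta_E$ with the $x_0$-coefficient zero forces $c=0$, contradicting Step 1's conclusion that some $g_i\ne0$. Since $d\le D=\max_{i_0<q_0}M^{i_0,q_0}$ guarantees $d-1<M^{i_0,q_0}$ for at least the maximizing pair, and one needs it for \emph{all} pairs to collapse everything, the induction is actually run slightly more carefully: one collapses $g$'s pair-by-pair, each collapse valid as soon as $d-1<M^{i_0,q_0}$ for \emph{that} pair, and the statement claims the range $d\le D$ where $D$ is the max, so in fact the argument shows the stronger pairwise statements $g_{i_0}=g_{q_0}$ whenever $d\le M^{i_0,q_0}$, and the theorem as stated follows because reaching $D$ forces at least that chain's collapse, which together with the complete graph propagates.

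\textbf{Main obstacle.} The delicate point is Step 3: making rigorous the claim that simultaneous divisibility of $g_{q_0}-g_{i_0}$ by a product of $s_{i_0,j_0}$ independent forms in $\{x_{i_0},x_{j_0}\}$ \emph{and} a product of $s_{j_0,q_0}$ independent forms in $\{x_{j_0},x_{q_0}\}$ yields a lower bound $s_{i_0,j_0}+s_{j_0,q_0}$ on its degree — and that these bounds along different triples in a chain \emph{add} rather than merely max-out. This requires a careful coprimality bookkeeping: the linear forms appearing at different stages of the chain involve variable-indices that are forced to be distinct by conditions (i)–(ii) defining $T^{i_0,q_0}$, which is exactly why those conditions are imposed. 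I expect the bulk of the proof to be this inductive degree-counting lemma; the reductions in Steps 1–2 are routine Saito-type arguments.
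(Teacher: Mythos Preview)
Your Steps 1 and 2 are correct and yield the clean statement $\prod_{r=1}^{s_{i,j}} L_{i,j}^r \mid (g_j-g_i)$ for every pair $i<j$. Step 3, however, has a genuine gap, and it is not the obstacle you identify at the end.

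You assert that $g_{q_0}-g_{i_0}$ ``acquires divisibility of degree $\ge\min\{s_{i_0,q_0},\,s_{i_0,j_0}+s_{j_0,q_0}\}$'' from the triple $(i_0,j_0,q_0)$. But Step 2 only tells you that $\prod_r L_{i_0,j_0}^r$ divides $g_{j_0}-g_{i_0}$ and that $\prod_r L_{j_0,q_0}^r$ divides $g_{q_0}-g_{j_0}$; neither product has any reason to divide the sum $g_{q_0}-g_{i_0}=(g_{q_0}-g_{j_0})+(g_{j_0}-g_{i_0})$. The only divisor of $g_{q_0}-g_{i_0}$ you actually possess is $\prod_r L_{i_0,q_0}^r$, of degree $s_{i_0,q_0}$, and since every $m_v^{i_0,q_0}\le s_{i_0,q_0}$ the chain contributes nothing new to that particular polynomial. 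Iterating the further triples of $v$ does not help for the same reason; the ``degree-counting lemma'' you anticipate is false in the form you state it.

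The repair is to propagate \emph{equalities}, not divisibilities, along the chain. Pick one pair $(i_0,q_0)$ and one $v\in T^{i_0,q_0}$ with $m_v^{i_0,q_0}=D$. Since $d\le D\le s_{i_0,q_0}$ and $\deg(g_{q_0}-g_{i_0})\le d-1$, the direct divisibility forces $g_{i_0}=g_{q_0}$. Now walk along $v$: at step $m$, conditions (i)--(ii) guarantee that $i_m,q_m$ lie among the indices already known to share a common $g$, so $g_{i_m}=g_{q_m}$. Hence $g_{j_m}-g_{i_m}=g_{j_m}-g_{q_m}$ is a \emph{single} polynomial divisible by both $\prod_r L_{i_m,j_m}^r$ and $\prod_r L_{j_m,q_m}^r$; these products are coprime in the UFD $R$ (their linear factors have distinct two-variable supports with nonzero coefficients, hence are pairwise non-proportional), so their product, of degree $s_{i_m,j_m}+s_{j_m,q_m}\ge d>d-1$, divides $g_{j_m}-g_{i_m}$, forcing $g_{j_m}=g_{i_m}$. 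After $n-1$ steps the chain has visited all $n+1$ indices --- this is precisely what conditions (i)--(ii) arrange --- so all $g_i$ coincide and $\theta\in R_{d-1}\theta_E$. Your closing worry about needing the bound for every pair and then invoking ``the complete graph'' is therefore misdirected: a single $v$ realizing $D$ already suffices.

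Once repaired in this way, your argument is in fact more transparent than the paper's own proof. The paper does not use divisibility of the $g_j-g_i$ at all; it expands each $g_t$ in monomials, rewrites the constraint coming from $L_{i,j}^r$ as the vanishing, at the ratio $a_i^{(r;i,j)}/a_j^{(r;i,j)}$, of an auxiliary univariate polynomial whose coefficients are differences $\alpha^i_{(w)}-\alpha^j_{(w)}$, and then proves the analogue of the inductive step above (its ``Claim'') via a second, more delicate root-counting that mixes two such families of relations across all partitions of $d-1$. Your one-line UFD/coprimality argument replaces that entire computation; the only subtlety is organising the induction in the correct direction, which is exactly where your draft went astray.
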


\begin{proof} First of all, we observe that a derivation $\theta = \widetilde{f_0}\partial_{x_0}+ \cdots + \widetilde{f_n}\partial_{x_{n}} \in \Der(-\log \cA )_d$ if and only if for each $k$ with $0 \leq k \leq n$ there is $f_k \in R_{d-1}$ such that $\widetilde{f_{k}} = f_kx_k$ and, for each $i,j$ with $0 \leq i < j \leq n$ and each $r$ with $1 \leq r \leq s_{i,j}$ there is $f^{(r;i,j)} \in R_{d-1}$  such that
\begin{equation} \label{L-col1}  f^{(r;i,j)} L_{i,j}^{r}=\theta(L_{i,j}^{r})=a_i^{(r;i,j)} x_i f_i +a_j^{(r;i,j)} x_j f_j.  \end{equation}

Let us write
\[ f_t= \sum_{i_0+\cdots+i_n=d-1} \alpha^t_{(i_0, \hdots,i_n)}x_0^{i_0} x_1^{i_1} \cdots x_n^{i_n}, \quad \; 0 \leq t \leq n \]
and, for each $i,j$ with $0 \leq i < j \leq n$ and each $r$ with $1 \leq r \leq s_{i,j},$
\[ f^{(r;i,j)}= \sum_{i_0+\cdots+i_n=d-1} \alpha^{(r;i,j)}_{(i_0, \hdots,i_n)}x_0^{i_0} x_1^{i_1} \cdots x_n^{i_n}. \]
Our goal is to see that $\theta$ is a multiple of the Euler derivation $\theta_{E} \in \Der(-\log(\cA))_1$, that is to prove the equality $f_{0}=\dotsb=f_{n}$ or, equivalently, to see that for any partition $(i_0,\dotsc,i_{n})$ of $d-1$, and any $i,j$ with $0\leq i<j\leq n$, 
\begin{equation}\label{desired equalities}
  \alpha^{i}_{(i_0,\dotsc,i_{n})}=\alpha^{j}_{(i_0,\dotsc,i_{n}).}
\end{equation} To achieve this goal, we construct polynomials $P_{\ell}(X) \in k[X]$ of certain degree $\ell$ (see for instance (\ref{Eq:Polynomial})) having as coefficients the differences $\alpha^{i}_{(i_{0},\dotsc,i_{n})}-\alpha^{j}_{(i_{0},\dotsc,i_{n})}$. We show that such polynomials $P_{\ell}(X)$ have more than $\ell$ roots, thus obtaining the vanishing of all their coefficients, and so the desired equalities (\ref{desired equalities}) follow.

\vspace{4mm}
First, notice that equations (\ref{L-col1}) hold if and only if for any partition $(k_0, \hdots, k_n)$ of $d$ and for all $i,j,r$, with $0 \leq i < j \leq n$ and $1 \leq r \leq s_{i,j}$,
\[0=a_i^{(r;i,j)}(\alpha^{(r;i,j)}_{(k_0, \hdots,k_i-1, \hdots,k_n)}-\alpha^{i}_{(k_0, \hdots,k_i-1, \hdots,k_n)} )+ a_j^{(r;i,j)}(\alpha^{(r;i,j)}_{(k_0, \hdots,k_j-1, \hdots,k_n)}-\alpha^{j}_{(k_0, \hdots,k_j-1, \hdots,k_n)}), \]
with the convention that $\alpha^{(r;i,j)}_{(r_0, \hdots,r_i, \hdots,r_n)}=0$ and $\alpha^{t}_{(r_0, \hdots,r_i, \hdots,r_n)}=0$ if some $r_l<0$. For any partition $(k_0, \hdots, k_n)$ of $d$ and any pair $i<j$, we will write $$k_j=d-k_i-\sum_{p\neq i,j}k_p=d-k_i-\overline{k}_{j}^{i}.$$ According to this notation,
equations (\ref{L-col1}) hold if and only if for any partition $(k_0, \hdots, k_n)$  of $d$ and all $i,j,r$ with $0 \leq i < j \leq n$ and $1 \leq r \leq s_{i,j}$,

\begin{multline}\label{Eq:substitute1}
0=a_i^{(r;i,j)}(\alpha^{(r;i,j)}_{(k_0, \hdots,k_i-1, \hdots,k_n)}-\alpha^{i}_{(k_0, \hdots,k_i-1, \hdots,k_n)} )\\
+ a_j^{(r;i,j)}(\alpha^{(r;i,j)}_{(k_0, \hdots,d-k_i-\overline{k}_{j}^{i}-1, \hdots,k_n)}-\alpha^{j}_{(k_0, \hdots,d-k_i-\overline{k}_{j}^{i}-1, \hdots,k_n)} ).
\end{multline}
Since $a_j^{(r;i,j)} \neq 0$, for all partitions with $k_i=0$ we have
\begin{equation}\label{Eq:substitute ki=0}
    \alpha^{(r;i,j)}_{(k_0, \hdots,0, \hdots, d-\overline{k}_{j}^{i}-1, \hdots,k_n)}=\alpha^{j}_{(k_0, \hdots,0, \hdots, d-\overline{k}_{j}^{i}-1, \hdots,k_n)}. 
\end{equation}
 For any partition with $k_i=1$, and combining (\ref{Eq:substitute1}) and (\ref{Eq:substitute ki=0}), we get
 \begin{multline}\label{Eq:substitute ki=1}
 a_j^{(r;i,j)}\alpha^{(r;i,j)}_{(k_0, \hdots,1, \hdots, d-2-\overline{k}_{j}^{i}, \hdots,k_n)}= a_i^{(r;i,j)}(\alpha^{i}_{(k_0, \hdots,0, \hdots, d-1-\overline{k}_{j}^{i}, \hdots,k_n)}-\alpha^{j}_{(k_0, \hdots,0, \hdots, d-1-\overline{k}_{j}^{i}, \hdots,k_n)})\\
+ a_j^{(r;i,j)}\alpha^{j}_{(k_0, \hdots,1, \hdots, d-2-\overline{k}_{j}^{i}, \hdots,k_n)}.
 \end{multline}

Continuing with the same argument and using the fact that for any partition such that $k_i=d-\overline{k}_j^i$ we have $\alpha^{(r;i,j)}_{(k_0, \hdots,d-\overline{k}_j^i-1, \hdots,0, \hdots,k_n)}=\alpha^{i}_{(k_0, \hdots,d-\overline{k}_j^i-1, \hdots,0, \hdots,k_n)}$, we can conclude that

\[\alpha^{i}_{(k_0, \hdots,d-\overline{k}_j^i-1, \hdots,0, \hdots,k_n)}= \sum_{l=0}^{d-\overline{k}_j^i-2}(-1)^{l+1} \frac{(a_i^{(r;i,j)})^{d-\overline{k}_j^i-1-l}}{(a_j^{(r;i,j)})^{d-\overline{k}_j^i-l-1}}
( \alpha^i_{(k_0, \hdots, l, \hdots, d-\overline{k}_j^i -1-l, \hdots, k_n)}- \]
\[\alpha^j_{(k_0, \hdots, l, \hdots, d-\overline{k}_j^i -1-l, \hdots, k_n)} )  +  \alpha^j_{(k_0, \hdots, d-\overline{k}_j^i-1, \hdots, 0, \hdots, k_n)}. \]

Therefore,

\begin{equation}\label{Eq:PolynomialEvaluated}
\sum_{l=0}^{d-\overline{k}_j^i-1}(-1)^{l+1} (\frac{a_i^{(r;i,j)}}{a_j^{(r;i,j)} })^{d-\overline{k}_j^i-1-l}
( \alpha^i_{(k_0, \hdots, l, \hdots, d-\overline{k}_j^i -1-l, \hdots, k_n)}-\alpha^j_{(k_0, \hdots, l, \hdots, d-\overline{k}_j^i -1-l, \hdots, k_n)} )=0.
\end{equation}

Expression (\ref{Eq:PolynomialEvaluated}) can be seen as the evaluation at $\frac{a_i^{(r;i,j)}}{a_j^{(r;i,j)}}$ of the polynomial $P_{\ell}(X)\in k[X]$ of degree $\ell=d-\overline{k}_j^i-1$:
\begin{equation}\label{Eq:Polynomial}
P_{\ell}(X)=\sum_{l=0}^{d-\overline{k}_j^i-1}(-1)^{l+1}
( \alpha^i_{(k_0, \hdots, l, \hdots, d-\overline{k}_j^i -1-l, \hdots, k_n)}-\alpha^j_{(k_0, \hdots, l, \hdots, d-\overline{k}_j^i -1-l, \hdots, k_n)} )X^{d-\overline{k}_j^i-1-l}.
\end{equation}

Since the $s_{i,j}$ hyperplanes defined by $L_{i,j}^r = a_i^{(r;i,j)}x_{i}+a_j^{(r;i,j)}x_{j}$, $1 \leq r \leq s_{i,j}$, are pairwise different in $\PP^{n}$, we obtain that $P_{\ell}(X)$ has $s_{i,j}$ different roots. If $s_{i,j} \geq d-\overline{k}_j^i$, then $P_{\ell}(X)$ would have more roots than its degree $\ell$, implying that $P_{\ell}=0$. In particular, we obtain that for any $l$ with $0 \leq l \leq d-\overline{k}_j^i-1$,
\begin{equation} \label{L-AA} \alpha^i_{(k_0, \hdots, l, \hdots, d-\overline{k}_j^i -1-l, \hdots, k_n)}=\alpha^j_{(k_0, \hdots, l, \hdots, d-\overline{k}_j^i -1-l, \hdots, k_n)}.\end{equation}
On the contrary, if $s_{i,j} \leq d-\overline{k}_j^i-1$, we get the following expression
\[\alpha^j_{(k_0, \hdots, d-\overline{k}_j^i -1, \hdots,0, \hdots, k_n)}=  \sum_{l=0}^{d-\overline{k}_j^i-2}(-1)^{l+1} (\frac{a_i^{(r;i,j)}}{a_j^{(r;i,j)} })^{d-\overline{k}_j^i-1-l}
(\alpha^i_{(k_0, \hdots, l, \hdots, d-\overline{k}_j^i -1-l, \hdots, k_n)}-
\]
\begin{equation} \label{L-A} \alpha^j_{(k_0, \hdots, l, \hdots, d-\overline{k}_j^i -1-l, \hdots, k_n)} ) +\alpha^i_{(k_0, \hdots, d-\overline{k}_j^i -1, \hdots,0, \hdots, k_n)}.\end{equation}
\vspace{3mm}

\noindent {\bf Claim:} Let $k,j,i$ be pairwise different integers with $0 \leq k,j,i \leq n$. Assume that $s_{i,k} + s_{i,j} \geq d$ and that for any partition $(w):=(w_{0},\dotsc,w_{n})$ of $d-1$ we have $\alpha^{k}_{(w)}=\alpha^{j}_{(w)}$. Then, for any partition $(w)$ of $d-1$ we get
\[
\alpha^{i}_{(w)}=\alpha^{k}_{(w)}=\alpha^{j}_{(w)}.
\]
\vspace{3mm}

\noindent {\bf Proof of the Claim:} To simplify the notation, we will assume that $k=n-1$ and $j=n$. By hypothesis, $s_{i,n-1} + s_{i,n} \geq d$ and for any partition $(w)$ of $d-1$ we have $\alpha^{n-1}_{(w)}=\alpha^n_{(w)}$.  We also can assume that $s_{i,n-1} \leq s_{i,n}$, otherwise we can permute the role of $s_{i,n-1}$ by the one of $s_{i,n}$. Notice also that if $d \leq s_{i,n-1}$ or $d \leq s_{i,n}$, equation (\ref{L-AA}) would give us the desired equalities. So, we assume that $d > s_{i,n-1}$ and $d > s_{i,n}$,  we write $s_{i,n-1}=d-a$ and $s_{i,n}=d-b$ with $a \geq b > 0$ and,  we denote 
\[ \gamma^{(r;i,n-1)}:= \frac{a_i^{(r;i,n-1)}}{a_{n-1}^{(r;i,n-1)}} \quad \mbox{and} \quad \beta^{(r;i,n)}:= \frac{a_i^{(r;i,n)}}{a_{n}^{(r;i,n)}}.  \]
From (\ref{L-A}) we deduce that when $a \leq \overline{k}^{i}_{n-1}$, for any $0 \leq l \leq d-\overline{k}_{n-1}^i-1$
\begin{equation} \label{L-2}  \alpha^i_{(k_0, \hdots, d-\overline{k}_{n-1}^i -1-l, \hdots,l, k_n)}=\alpha^{n-1}_{(k_0, \hdots, d-\overline{k}_{n-1}^i -1-l, \hdots,l, k_n)}.
\end{equation}
If $a \geq \overline{k}^{i}_{n-1}+1$, we have
 \[  \sum_{l=0}^{d-\overline{k}_{n-1}^i-1}(-1)^{l} (\gamma^{(r;i,n-1)})^{l}
( \alpha^i_{(k_0, \hdots, d-\overline{k}_{n-1}^i -1-l, \hdots,l, k_n)}-\alpha^{n-1}_{(k_0, \hdots, d-\overline{k}_{n-1}^i -1-l, \hdots,l, k_n)} )=0. \]
Equivalently, when $b \leq \overline{k}^{i}_{n}$, for any $0 \leq l \leq, d-\overline{k}_{n}^i-1$
\begin{equation}  \alpha^i_{(k_0, \hdots, d-\overline{k}_{n}^i -1-l, \hdots, l)}=\alpha^{n}_{(k_0, \hdots, d-\overline{k}_n^i -1-l, \hdots, l)}
\end{equation}
and, if $b \geq \overline{k}^{i}_{n}+1$, we obtain
\[  \sum_{l=0}^{d-\overline{k}_{n}^i-1}(-1)^{l} (\beta^{(r;i,n)})^{l}
( \alpha^i_{(k_0, \hdots, d-\overline{k}_{n}^i -1-l, \hdots,l)}-\alpha^{n}_{(k_0, \hdots, d-\overline{k}_{n}^i -1-l, \hdots,l)} )=0. \]
The assumption $s_{i,n-1}+s_{i,n} \geq d$ implies that $a \leq d-b$. If $\overline{k}_{n}^{i} \leq b-1$, then we have $t \leq d-\overline{k}_n^i-1$ and the last equality can be written as
\[\sum_{l=0}^{a-1}(-1)^{l} (\beta^{(r;i,n)})^{l}
( \alpha^i_{(k_0, \hdots, d-\overline{k}_{n}^i -1-l, \hdots,l)}-\alpha^{n}_{(k_0, \hdots, d-\overline{k}_{n}^i -1-l, \hdots,l)} ) \]
 \begin{equation} \label{L-4} +   \sum_{l=t}^{d-\overline{k}_{n}^i-1}(-1)^{l} (\beta^{(r;i,j)})^{l}
( \alpha^i_{(k_0, \hdots, d-\overline{k}_{n}^i -1-l, \hdots,l)}-\alpha^{n}_{(k_0, \hdots, d-\overline{k}_{n}^i -1-l, \hdots,l)} )=0. \end{equation}
All the partitions appearing in the second addend of (\ref{L-4}) verify the conditions in (\ref{L-2}). Hence, for any $l$ with $t \leq l \leq  d-\overline{k}_n^i-1$, we have
\[\alpha^i_{(k_0, \hdots, d-\overline{k}_{n}^i -1-l, \hdots,l)}=\alpha^{n-1}_{(k_0, \hdots, d-\overline{k}_{n}^i -1-l, \hdots,l)}.   \]
By hypothesis, for any partition $(w)$ of $d-1$,  we have $\alpha_{(w)}^{n-1} = \alpha_{(w)}^{n}$. Thus, equation (\ref{L-4}) is reduced to
\[ 0= \sum_{l=0}^{t-1}(-1)^{l} (\beta^{(r;i,n)})^{l}
( \alpha^i_{(k_0, \hdots, d-\overline{k}_{n}^i -1-l, \hdots,l)}-\alpha^{n}_{(k_0, \hdots, d-\overline{k}_{n}^i -1-l, \hdots,l)} ). \]
This can be seen as the evaluation at $\beta^{(r;i,n)}$ of the polynomial $P_{t-1}(X)$of degree $t-1$: \[P_{t-1}(X) = 
\sum_{l=0}^{t-1}(-1)^{l} ( \alpha^i_{(k_0, \hdots, d-\overline{k}_{n}^i -1-l, \hdots,l)}-\alpha^{n}_{(k_0, \hdots, d-\overline{k}_{n}^i -1-l, \hdots,l)}) X^{l}.\]
Given that $t \leq s_{i,n}$, $P_{t-1}(X)$ has more than $t$ roots implying $P_{t-1} = 0$. Then, for any $l$ with $0 \leq l \leq t-1$, we get
\[\alpha^i_{(k_0, \hdots, d-\overline{k}_{n}^i -1-l, \hdots,l)}=\alpha^{n}_{(k_0, \hdots, d-\overline{k}_{n}^i -1-l, \hdots,l)} .  \]
Putting altogether, we obtain that for any partition $(w)$ of $d-1$ we have
\[ \alpha^i_{(w)}=\alpha^{n-1}_{(w)}=\alpha^{n}_{(w)}, \]
which finishes the proof of the claim.

\vspace{3mm}
Fix $v \in T^{n-1,n}$. Without loss of generality we can assume that  $d \leq \min m_{v}^{n-1,n}$, otherwise we permute the indexes in the argument below. Thus $d \leq s_{n-1,n}$ and
\[ d \leq \min \{s_{n-1,j_1}+s_{j_1,n}, s_{i_2,j_2}+s_{j_2,q_2},\hdots,  s_{i_{n-2},j_{n-2}}+s_{j_{n-2},q_{n-2}} \}. \]
Finally, using iteratively the {\bf Claim}, we will see that for any partition $(w) = (w_{0},\hdots, w_{n})$ of $d-1$, we have $\alpha^0_{(w)}= \alpha^1_{(w)}= \cdots = \alpha^n_{(w)}.$ 

Let us see the first steps of the iteration. Since $d \leq s_{n-1,n}$, from (\ref{L-AA}) it follows that $\alpha^{n-1}_{(w)}=  \alpha^n_{(w)}$, for any partition $(w)$. Moreover $d \leq s_{n-1,j_1}+s_{j_1,n}$ and, hence, by the claim we obtain equalities
\[  \alpha^{j_1}_{(w)}= \alpha^{n-1}_{(w)}=  \alpha^n_{(w)}. \]
We also know that $j_2 \notin \{j_1,n-1,n \} $ and $i_2,q_2 \in \{j_1,n-1,n\}$. Therefore, $\alpha^{i_2}_{(w)}= \alpha^{q_2}_{(w)}$ for any partition $(w)$ and, by assumption, $d \leq s_{i_2j_2}+s_{j_2q_2}$. Also by the {\bf Claim}
\[\alpha^{j_2}_{(w)}=  \alpha^{j_1}_{(w)}= \alpha^{n-1}_{(w)}=  \alpha^n_{(w)}, \]
for any partition $(w)$.
Now, $j_3 \notin \{i_2,j_1,j_2,q_2,n-1,n \} $ and $i_3,q_3 \in \{i_2,j_1,j_2,q_2,n-1,n\}$. Thus, $\alpha^{i_3}_{(w)}= \alpha^{q_3}_{(w)}$ for any partition $(w)$. Since $d \leq s_{i_3j_3}+s_{j_3q_3}$, applying  again the {\bf Claim} we obtain 
\[\alpha^{j_3}_{(w)}=\alpha^{j_2}_{(w)}=  \alpha^{j_1}_{(w)}= \alpha^{n-1}_{(w)}=  \alpha^n_{(w)}, \]
for any partition $(w)$.
Repeating the same argument for each summand $s_{i_m,j_m}+s_{j_m,q_m}$ in $S_{v}^{n-1,n}$, we obtain
\[\alpha^{j_m}_{(w)}= \cdots=\alpha^{j_2}_{(w)}=  \alpha^{j_1}_{(w)}= \alpha^{n-1}_{(w)}=  \alpha^n_{(w)}. \]
Given that we have $n-2$ summands and at each step $j_m \notin S^{n-1,n}_{m-1}$, after $n-2$ iterations we finally conclude that
\[\alpha^{0}_{(w)}=\alpha^{1}_{(w)}= \cdots=  \alpha^{n-1}_{(w)}=  \alpha^n_{(w)}, \]
for any partition $(w)$. In particular this implies that $\Der(-\log \cA )_d \subset R_{d-1} \theta_E$.
\end{proof}

\vspace{3mm}
\begin{remark}
\label{Remark-bound-Triangular}
 \rm  (1) When $n = 2$, the bound in Theorem \ref{ThmBound} simplifies to
$$M^{1,2} = \min \; S^{1,2} = \min\{s_{1,2}, s_{1,0} + s_{0,2}\}.$$
Indeed, $S^{0,1} = \min\{s_{0,1},s_{0,2}+s_{1,2}\} \leq s_{0,1}$ and $S^{0,2} = \min\{s_{0,2}, s_{0,1}+s_{1,2}\} \leq s_{0,2}$. Both are clearly smaller or equal than $s_{1,2}$ and $s_{1,0}+s_{0,2}$ anyway.

(2) Given integers $1 \leq s_{i,j}$, $ 0 \leq i < j \leq n$ such that $D = s_{n-1,n}$, there exists $\cA \in \cH(s_{i,j})$ with $\indeg(\syz(J_{\cA})) = D+1$.
Consider the extended Fermat arrangement $\cA_{s_{n-1,n}}$ with associated equation $x_{0}\cdots x_{n}\prod_{0 \leq i < j \leq n}(x_{i}^{s_{n-1,n}} - x_{j}^{s_{n-1,n}})$. For each $i,j$ with $0 \leq i < j \leq n$, we set $h_{i,j}$ to be the product of the $s_{i,j}$ different linear factors of $(x_{i}^{s_{n-1,n}}- x_{j}^{s_{n-1,n}})$. We claim that the arrangement $\cA$ with associated equation $x_{0}\cdots x_{n} \prod_{0 \leq i < j \leq n} h_{i,j}$ has $\indeg(\syz(J_{\cA})) = D+1$. Indeed, by Theorem \ref{ThmBound} we have that $\indeg(\syz(J_{\cA})) \geq D+1$. Since $\indeg(\syz(J_{\cA_{s_{n-1,n}}})) = D+1$ and $\syz(J_{\cA}) \subset \syz(\cA_{s_{n-1,n}})$, the claim follows.
\end{remark}

The following result collects examples of complete hypertetrahedral arrangements reaching the bound in Theorem \ref{ThmBound}.

\begin{proposition}\label{sharp}  For any integers $n\ge 2$,  $a\ge 2$ and $r\ge 1$ the hypertetrahedral arrangement $\cA _{a,n}^r$ with defining equation
$$x_0x_1\cdots x_n\prod _{0\le i<j \le n}(x_i^a-x_j ^a)g_{(r-1)a}(x_{n-1},x_n),$$ where $g_{(r-1)a}(x_{n-1},x_n)\in k[x_{n-1},x_n]$ is a general homogeneous polynomial of degree $(r-1)a$, is free with exponents  $(1,2a+1, \hdots , na+1, ra+1)$.
\end{proposition}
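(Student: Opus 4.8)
The plan is to apply Saito's criterion (the determinantal freeness test, \cite[Theorem~1.8]{S}) directly: I will write down $n+1$ explicit logarithmic derivations of $\cA:=\cA_{a,n}^r$ of degrees $1,2a+1,3a+1,\dots,na+1,ra+1$ and check that the determinant of their coefficient matrix is a nonzero scalar multiple of $f_{\cA}$; freeness with the stated exponents then follows immediately. It is convenient to write $u=x_{n-1}$, $v=x_n$, $h=g_{(r-1)a}(u,v)$, so $f_{\cA}=x_0\cdots x_n\prod_{0\le i<j\le n}(x_i^a-x_j^a)\,h$, and to let $\cB$ be the extended Fermat arrangement obtained by deleting $h$; generality of $g_{(r-1)a}$ is used only to guarantee that $\cA$ is a genuine arrangement (i.e. that the $ra$ linear factors of $(u^a-v^a)h$ are pairwise distinct).

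The derivations I would use are: the Euler derivation $\theta_E$, of degree $1$; for $2\le j\le n$, the derivation
\[
\theta_j=\sum_{i=0}^{n-2}x_i^{(j-2)a+1}\,(x_i^a-u^a)(x_i^a-v^a)\,\partial_{x_i},
\]
of degree $ja+1$, which has vanishing $\partial_u$- and $\partial_v$-components; and
\[
\psi=h\cdot\Bigl(\,\sum_{i=0}^{n}x_i\bigl(\textstyle\sum_{l\ne i}x_l^a\bigr)\,\partial_{x_i}\Bigr),
\]
of degree $(r-1)a+(a+1)=ra+1$. Checking that these lie in $\Der(-\log\cA)$ is a short verification against the factors of $f_{\cA}$. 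For $\theta_j$ one uses $\theta_j(x_i)\in(x_i)$, $\theta_j(u)=\theta_j(v)=0$, the identities $\theta_j(x_p^a-u^a)=a(x_p^a)^{j-1}(x_p^a-u^a)(x_p^a-v^a)$ and its analogue for $v$, the congruence $(x_p^a)^{j-1}(x_p^a-u^a)(x_p^a-v^a)\equiv(x_q^a)^{j-1}(x_q^a-u^a)(x_q^a-v^a)\pmod{x_p^a-x_q^a}$ for $p,q\le n-2$, and $\theta_j(u-\mu v)=0$ for each linear factor $u-\mu v$ of $h$. For $\psi$ one first notes that $\sum_i x_i(\sum_{l\ne i}x_l^a)\partial_{x_i}\in\Der(-\log\cB)$ (its value on $x_p^a-x_q^a$ is $a(x_p^a-x_q^a)\sum_{l\ne p,q}x_l^a$), and then invokes Lemma~\ref{section}: multiplying a logarithmic derivation of $\cB$ by $h$, the product of the defining forms of the hyperplanes of $\cA\setminus\cB$, yields a logarithmic derivation of $\cA$ (the computation in the proof of that lemma shows $h\cdot\Der(-\log\cB)\subseteq\Der(-\log\cA)$).

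To compute the Saito determinant, order the rows $\theta_E,\theta_2,\dots,\theta_n,\psi$ and the columns $\partial_{x_0},\dots,\partial_{x_{n-2}},\partial_u,\partial_v$. Because each $\theta_j$ ($2\le j\le n$) vanishes in the last two columns, a Laplace expansion along those columns leaves a single nonzero term, indexed by the two rows $\theta_E$ and $\psi$, namely
\[
\pm\det\begin{pmatrix} u & v \\ h\,u\sum_{l\ne n-1}x_l^a & h\,v\sum_{l\ne n}x_l^a \end{pmatrix}=\pm\,h\,uv\,(u^a-v^a)
\]
times the complementary $(n-1)\times(n-1)$ minor of $\theta_2,\dots,\theta_n$ in the columns $\partial_{x_0},\dots,\partial_{x_{n-2}}$. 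Factoring $x_i(x_i^a-u^a)(x_i^a-v^a)$ out of the $i$-th column of that minor reduces it to the Vandermonde determinant of $x_0^a,\dots,x_{n-2}^a$, equal to $\pm\prod_{0\le i<i'\le n-2}(x_i^a-x_{i'}^a)$. Collecting the factors, the total determinant equals, up to sign,
\[
h\,x_0\cdots x_{n-2}\,uv\,(u^a-v^a)\prod_{i=0}^{n-2}(x_i^a-u^a)(x_i^a-v^a)\prod_{0\le i<i'\le n-2}(x_i^a-x_{i'}^a),
\]
and regrouping $\prod_{0\le i<j\le n}(x_i^a-x_j^a)$ shows this is exactly $\pm f_{\cA}$; Saito's criterion then gives the proposition.

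The step I expect to be the real obstacle is finding the derivations $\theta_2,\dots,\theta_n$. The naive choice — the basic derivations $\sum_i x_i\,e_j\!\left(x_0^a,\dots,\widehat{x_i^a},\dots,x_n^a\right)\partial_{x_i}$ of $\cB$ coming from elementary symmetric functions — fails to be logarithmic for $\cA$ as soon as the general hyperplanes $u-\mu v$ appear, since $e_j(x^a\setminus u^a)\not\equiv e_j(x^a\setminus v^a)\pmod{u-\mu v}$ for generic $\mu$. The remedy is to decouple the coordinates $x_0,\dots,x_{n-2}$ from the distinguished pair $u,v$ by inserting the factor $(x_i^a-u^a)(x_i^a-v^a)$, which annihilates the $\partial_u$- and $\partial_v$-directions altogether; once this is arranged the Laplace/Vandermonde evaluation above is routine. (For $r=1$ one has $h=\mathrm{const}$, $\psi$ reduces to the displayed inner derivation, and the argument recovers the known freeness of the extended Fermat arrangement.)
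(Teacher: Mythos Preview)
Your proof is correct and takes a genuinely different route from the paper's. The paper argues indirectly: it multiplies $n-1$ known sections of $\cT_{\cA_{a,n}^1}$ by the new factor to obtain sections of $\cT_{\cA_{a,n}^r}$ in degrees $2a+1,\dots,na+1$, forms the exact sequence
\[
0\longrightarrow \bigoplus_{i=2}^{n}\cO_{\PP^n}(-ia-1)\longrightarrow \cT_{\cA_{a,n}^r}\longrightarrow I_Z(-ra-1)\longrightarrow 0,
\]
and then carries out a Chern-class computation---comparing $c_2$ obtained from this sequence with $c_2$ obtained from the Jacobian sequence and an enumeration of the codimension-$2$ strata of the singular locus---to conclude $\deg Z=0$, hence $Z=\emptyset$ and the sequence splits. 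Your argument bypasses all of this by exhibiting the full basis directly and invoking Saito's criterion; the Laplace/Vandermonde evaluation replaces the Chern-class bookkeeping.

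The key idea in your approach, which has no counterpart in the paper, is the construction of the $\theta_j$: rather than using the symmetric-function derivations of the extended Fermat arrangement (which, as you observe, fail for the new hyperplanes $u-\mu v$), you force the $\partial_u$- and $\partial_v$-components to vanish by inserting the factor $(x_i^a-u^a)(x_i^a-v^a)$. This simultaneously guarantees tangency along the extra hyperplanes and makes the determinant block-triangular. Your method is more elementary---no sheaf theory, no intersection-theoretic computation---and yields the explicit basis; it also makes transparent that generality of $g_{(r-1)a}$ is needed only for reducedness. The paper's approach, by contrast, is more structural and would adapt more readily to situations where an explicit basis is not apparent, since one only needs enough sections to leave a rank-one cokernel.
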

\begin{proof}  By Example \ref{extendedFermat}(ii),  we know that the result is true for $r=1$. Let us assume $r>1$ and write the equation $f_{\cA _{a,n}^r}$ of the arrangement $\cA _{a,n}^r$
as follows $$
\begin{array}{rcl} f_{\cA _{a,n}^r} & = & \displaystyle x_0x_1\cdots x_n\prod _{0\le i<j \le n}(x_i^a-x_j ^a)g_{(r-1)a}(x_{n-1},x_n) \\
 & = & f_{\cA _{a,n}^{r-1}}g_{a}(x_{n-1},x_n),
 \end{array}
 $$ where $g_{a}(x_{n-1},x_n)\in k[x_{n-1},x_n]$ is a homogeneous polynomial of degree $a$.
By Lemma  \ref{section}, any section $0\ne s_i\in \mbox{H}^0(\cT_{\cA_{a,n}^1}(-ia-1))$, $1\le i \le n-1$, induces a section $0\ne \sigma _i=s_i g_{a}(x_{n-1},x_n)\in \mbox{H}^0(\cT_{\cA_{a,n}^r}(-(i+1)a-1))$, being $\cT_{\cA_{a,n}^r}=\ker (\cO_{\PP^{n}}^{n+1} \longrightarrow J_{\cA _{a,n}^r}(\binom{n+1}{ 2}a+(r-1)a+n))$. Hence, we have two exact sequences to deal with:
\begin{equation}\label{jacobseq}
 0\longrightarrow \cT_{\cA_{a,n}^r}  \longrightarrow \cO_{\PP^{n}}^{n+1} \longrightarrow J_{\cA _{a,n}^r}(\binom{n+1}{2}a+(r-1)a+n)\longrightarrow 0
 \end{equation}
where $J_{\cA _{a,n}^r}$ is the Jacobian ideal of $f_{\cA _{a,n}^{r}}$, and
\begin{equation}\label{globsect}
0\longrightarrow \bigoplus  _{i=2}^n  \cO_{\PP^{n}} (-ia-1) \longrightarrow \cT_{\cA_{a,n}^r} \longrightarrow I_Z(-ra-1) \longrightarrow 0
\end{equation}
where $Z$ is a codimension 2 subscheme of $\PP^n$.

\noindent {\bf Claim:} $Z= \emptyset $.
In order to prove that $\deg (Z)=0$ we will compute the second Chern class of $\cT_{\cA_{a,n}^r}$ using two different approaches.
Using the exact sequence (\ref{globsect}), we get $c_t(\cT_{\cA_{a,n}^r})=c_t(\bigoplus  _{i=2}^n  \cO_{\PP^{n}} (-ia-1))c_t(I_Z(-ra-1)) $. Therefore, we have

\begin{equation}
\label{c2ONE}
\begin{array}{rcl}
c_2(\cT_{\cA_{a,n}^r}) & = & \displaystyle c_2(I_Z(-ra-1))+c_2(\bigoplus _{i=2}^n \cO_{\PP^{n}} (-ia-1))+\vspace{2mm}\\
& &  \displaystyle +c_1(I_Z(-ra-1))c_1(\bigoplus _{i=2}^n \cO_{\PP^{n}} (-ia-1)) \vspace{2mm}\\
& = & \displaystyle \deg (Z)+\sum _{2\le i<j\le n}(ia+1)(ja+1)+\sum _{i=2}^n(ia+1)(ra+1)\vspace{2mm}\\
& = & \deg (Z) +\binom{n}{2}+a(n-1)(\binom{n-1}{2}+r-1)+\frac{a^2}{2}(\binom{n+1}{3}\frac{3n+2}{2} +\\
& & +(r-1)(n-1)(n+2)).
\end{array}
\end{equation}

On the other hand, the second Chern class of $\cT_{\cA_{a,n}^r}$ can be computed using Jacobian map (\ref{jacobseq}) in addition to the codimension 2 part of the singular locus of $\cA _{a,n}^r$. The latter turns out to be:
\begin{itemize}
\item $\binom{n+1}{2}-1$ linear subspaces of codimension 2 which are the intersection of $a+2$ hyperplanes. Indeed, they correspond to the $a+2$ hyperplanes passing through the codimension 2 linear subspace $x_i=x_j=0$ with $0\le i <j\le n$ and $(i,j)\ne (n-1,n)$.
\item 1 linear subspace of codimension 2 which is the intersection of $ra+2$ hyperplanes. Indeed, the arrangement contains $ra+2$ hyperplanes containing the codimension 2 linear subspace $x_n=x_{n-1}=0$.
\item $\binom{n+1}{3}a^2$ linear subspaces of codimension 2 which are the intersection of 3 hyperplanes. Indeed, for $0\le i_1<i_2<i_3\le n$ and $\epsilon _1, \epsilon _2$ $a$-th roots of 1, the 3 hyperplanes $x_{1_1}-\epsilon _1x_{i_2}$, $x_{1_2}-\epsilon _2x_{i_3}$ and $x_{1_1}-\epsilon _1\epsilon _2x_{i_3}$ meet in a codimension 2 linear subspace.  
\item $\frac{a^2}{2}\binom{n+1}{2}\binom{n-1}{ 2}+(\binom{n+1}{2}+r-1)(n-1)a+(\binom{n+1}{ 2}-1)(r-1)a^2$ linear subspaces of codimension 2 which are the intersection of 2 hyperplanes.
\end{itemize}

Therefore, using the exact sequence (\ref{jacobseq}) and the properties of Chern classes (see \cite[Section 2.5]{Ful}),  we obtain:
\begin{equation}\label{c2BIS}
\begin{array}{rcl}
c_2(\cT_{\cA_{a,n}^r}) & = & - c_1( \cT_{\cA_{a,n}^r})c_1( J_{\cA _{a,n}^r}(\binom{n+1}{2}a+(r-1)a+n))-c_2( J_{\cA _{a,n}^r}(\binom{n+1}{2}a+(r-1)a+n))\\
& = & (\binom{n+1}{ 2}a+n+(r-1)a)^2-(\binom{n+1}{2}-1)(a+1)^2-(ra+1)^2\\
 & & -4\binom{n+1}{3}a^2 -\frac{a^2}{2}\binom{n+1}{2}\binom{n-1}{ 2}-(\binom{n+1}{2}+r-1)(n-1)a\\
 & & -(\binom{n+1}{2}-1)(r-1)a^2 \\
& = & \binom{n}{2}+a(n-1)(\binom{n-1}{
2}+r-1)+\frac{a^2}{2}(\binom{n+1}{3}\frac{3n+2}{2}\\
& & +(r-1)(n-1)(n+2)).
\end{array}
\end{equation}
Comparing the equalities (\ref{c2ONE}) and (\ref{c2BIS}),  we get $\deg(Z)=0$. Hence, the exact sequence  (\ref{globsect}) splits and  we conclude that $\cA_{a,n}^r$ is free with exponents $2a+1$, $\hdots $ , $na+1$, $ra+1$.
 \end{proof}

\vspace{0.3cm}
\noindent {\bf Triangular arrangements.}

\vspace{0.2cm}
In the last part of this section, we apply the previous results  to the particular case of complete triangular arrangements. From now on, we set
 $x=x_{0}$, $y=x_{1}$ and $z=x_{2}$ and $(s_1,s_2,s_3)=(s_{0,1},s_{0,2},
 s_{1,2})$ with $0 < s_{1} \leq s_{2} \leq s_{3}$. Any complete triangular arrangement $\cA\in \cH(s_1, s_2 ,s_3)$ of $s_1+s_2+s_3+3$  lines in $\PP^{2}$ is given by $x$, $y$, $z$ and the following $s_1+s_2+s_3$ linear forms:
 \[ l_i=x-a_iy, \quad 1 \leq i \leq s_1\]
  \[ l_j=b_jx-z, \quad 1 \leq j \leq s_2\]
  \[ l_t=y-c_tz, \quad 1 \leq t \leq s_3.\]

By Remark \ref{Remark-bound-Triangular}, $\indeg(\syz(J_{\cA}))\geq \min\{s_{3}, s_{1} + s_{2}\}+1$. Furthermore, we have the following result.

\begin{theorem}\label{triang} For any $\cA\in\cH(s_1,s_2,s_3)$ we have the following.
\begin{itemize}
\item[(i)] If $s_3\geq s_1+s_2$, then $\indeg(\syz(J_{\cA}))=s_1+s_2+1$. In other words, $\Der(-\log \cA )_d \subset R\theta_{E}$ for all $1 \leq d \leq \min(s_{1} + s_{2},s_3)$. 
\item[(ii)] If $s_3+1\leq s_1+s_2$, then $s_3+1\leq\indeg(\syz(J_{\cA}))\leq s_1+s_2+1$. Moreover, if $\indeg(\syz(J_{\cA}))=s_3+1$, then $\cA$ is free with exponents $(1,s_{3}+1,s_{1}+s_{2}+1)$.
\end{itemize}

\end{theorem}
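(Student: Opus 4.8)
The overall plan is to extract the two numerical bounds directly from Proposition~\ref{Dimcan} and Theorem~\ref{ThmBound}, and then to settle the freeness assertion in (ii) by applying Saito's criterion to an explicit triple of logarithmic derivations.

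\emph{The bounds.} By Remark~\ref{Remark-bound-Triangular}(1), for $n=2$ the integer $D$ of Theorem~\ref{ThmBound} is $D=M^{1,2}=\min\{s_3,\,s_1+s_2\}$. Since $\Der(-\log\cA)=R\theta_E\oplus\syz(J_\cA)$ with $(R\theta_E)_d=R_{d-1}\theta_E$, Theorem~\ref{ThmBound} says exactly that $\syz(J_\cA)_d=0$ for all $1\le d\le\min\{s_3,s_1+s_2\}$, i.e. $\indeg(\syz(J_\cA))\ge\min\{s_3,s_1+s_2\}+1$. For the opposite inequality, note that because $s_1\le s_2\le s_3$ one has $s_{0,1}+s_{0,2}=s_1+s_2\le s_1+s_3,\ s_2+s_3$, so in Proposition~\ref{Dimcan} we may take $i_0=0$; this produces a syzygy of degree $s_1+s_2+1$ (not a multiple of $\theta_E$), whence $\indeg(\syz(J_\cA))\le s_1+s_2+1$. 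If $s_3\ge s_1+s_2$ the two bounds agree and give (i); the reformulation there is just Theorem~\ref{ThmBound} in this range, using $R_{d-1}\theta_E\subset R\theta_E$. If $s_3+1\le s_1+s_2$ they give $s_3+1\le\indeg(\syz(J_\cA))\le s_1+s_2+1$, the first half of (ii).

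\emph{Freeness in (ii).} Assume $s_3+1\le s_1+s_2$ and $\indeg(\syz(J_\cA))=s_3+1$. By the lower bound above this is equivalent to the existence of a nonzero logarithmic derivation $\theta_1=(\theta_1)_0\partial_x+(\theta_1)_1\partial_y+(\theta_1)_2\partial_z$ of degree $s_3+1$ with $\theta_1(f_\cA)=0$. Since $\cA$ is reduced, $\ell\mid\theta_1(\ell)$ for every defining linear form $\ell$ of a hyperplane of $\cA$; applied to $x,y,z$ this gives $(\theta_1)_0=xA_0$, $(\theta_1)_1=yA_1$, $(\theta_1)_2=zA_2$ with $A_0,A_1,A_2\in R_{s_3}$, and applied to $l_i=x-a_iy$, $l_j=b_jx-z$, $l_t=y-c_tz$ it gives
\[
\textstyle\prod_{i=1}^{s_1}l_i \mid (A_0-A_1),\qquad \prod_{j=1}^{s_2}l_j \mid (A_0-A_2),\qquad \prod_{t=1}^{s_3}l_t \mid (A_1-A_2).
\]
The last divisibility together with $\deg(A_1-A_2)=s_3=\deg\prod_t l_t$ yields $A_1-A_2=c'\prod_t l_t$ for a scalar $c'$. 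The key point is that $c'\neq0$: if $c'=0$ then $A_0-A_1=A_0-A_2$ is divisible by the coprime polynomials $\prod_i l_i$ and $\prod_j l_j$, hence by their product of degree $s_1+s_2$; as it lies in $R_{s_3}$ with $s_3<s_1+s_2$ it must vanish, so $A_0=A_1=A_2$ and $\theta_1=A_0\theta_E\in R\theta_E\cap\syz(J_\cA)=0$, contradicting $\theta_1\neq0$. This non-vanishing of $c'$ is the only delicate step, and it is precisely here that the hypothesis $s_3+1\le s_1+s_2$ is used.

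\emph{Conclusion via Saito.} Let $\theta_2=(xP-dh)\partial_x+yP\partial_y+zP\partial_z$ be the degree $s_1+s_2+1$ logarithmic derivation of Proposition~\ref{Dimcan} for $i_0=0$, with $h=x\prod_i l_i\prod_j l_j$, $d=s_1+s_2+3$ and $P\in R_{s_1+s_2}$, $\gcd(P,h)=1$. In the Saito matrix of $(\theta_E,\theta_1,\theta_2)$ replace the third row by itself minus $P$ times the Euler row; expanding along the resulting row $(-dh,0,0)$ gives
\[
\det=-dh\bigl(y(\theta_1)_2-z(\theta_1)_1\bigr)=-dh\,yz\,(A_2-A_1)=d\,c'\,xyz\,\textstyle\prod_i l_i\prod_j l_j\prod_t l_t=d\,c'\,f_\cA ,
\]
which is a nonzero scalar multiple of $f_\cA$ by the previous step. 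Hence $\cA$ is free by Saito's criterion, and since $\deg\theta_E=1$, $\deg\theta_1=s_3+1$, $\deg\theta_2=s_1+s_2+1$ add up to $\deg f_\cA$, these are the exponents of $\cA$.
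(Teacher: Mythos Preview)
Your argument is correct and, unlike the paper's own treatment (which simply cites \cite[Theorem 1.2]{Dimca16}), it is entirely self-contained within the paper: the bounds come straight from Proposition~\ref{Dimcan} and Theorem~\ref{ThmBound} via Remark~\ref{Remark-bound-Triangular}(1), and the freeness assertion is obtained by an explicit application of Saito's criterion. The key observation that $c'\neq 0$ is handled cleanly, and the Saito determinant computation is correct. This is a genuinely different route from the cited external reference, and it has the advantage of making the result an internal corollary of the paper's own machinery.

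One small slip to fix: you write $d=s_1+s_2+3$, copying the value of $d$ set in the proof of Proposition~\ref{Dimcan}. That value is a typo in the paper; for the Euler identity $\sum x_i\partial_{x_i}f_{\cA}=d\,f_{\cA}$ used there (and hence for $\theta_2$ to actually lie in $\Der(-\log\cA)$) one needs $d=\deg f_{\cA}=s_1+s_2+s_3+3$. With this correction your $\theta_2$ is a genuine logarithmic derivation and the Saito determinant is the nonzero scalar $d\,c'$ times $f_{\cA}$, so the conclusion stands unchanged.
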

\begin{proof}
See \cite[Theorem 1.2]{Dimca16}.  
\end{proof}

Let $\cA$ be a triangular arrangement in $\cH(s_{1},s_{2},s_{3})$ with $s_{3}+1\leq s_{1}+s_{2}$. By Theorem \ref{triang}, we know that there is an integer $s_{3}+1\leq d_{\cA}\leq s_{1}+s_{2}$ such that $\Der(-\log \cA)_{d_{\cA}}\nsubseteq R_{d_{\cA}-1}\theta_{E}$, but $\Der(-\log \cA)_{d}\subset R_{d-1}\theta_{E}$ for any $d\leq d_{\cA}$. Given an integer $s_{3}+1 \leq d \leq s_{1}+s_{2}$, we study which conditions must be satisfied by  a triangular arrangement $\cA\in\cH(s_{1},s_{2},s_{3})$ in order that $d_{\cA}=d$.

\begin{notation} \rm
Given  a complete triangular arrangement $\cA$ with $1\leq s_1\leq s_2\leq s_3$ and $s_3+1\leq s_1+s_2$, we denote 
\[f_{ijr}^{s}:=b_j^{-s}-(a_ic_r)^{s}.\]
For any integer $d$ in the range $s_3+1\leq d\leq s_1+s_2$, we define the matrices  $M_{\cA}^{d}$ as follows.

If $2d \leq s_1+s_2+s_3-1$, $M_{\cA}^{d}$ will be the matrix with $(d-s_1)(2d-s_2-s_3-1)$ columns and  $s_1s_2s_3$ rows $l_{ijr}$ given by
\[l_{ijr}=(g_{ijr}|h_{ijr}), \quad 1 \leq i \leq s_1, \quad  1 \leq j \leq s_2, \quad 1 \leq r \leq s_3,\quad\text{where}\]
\begin{flushleft}
{\scriptsize
$  \begin{array}{lllllllllll} g_{ijr}= (&f_{ijr}^{s_1} & \cdots & f_{ijr}^{d-1} &
c_rf_{ijr}^{s_1-1} & \cdots &c_rf_{ijr}^{d-2} & \cdots &c_r^{d-s_2-1}f_{ijr}^{s_1+s_2+1-d} & \cdots & c_r^{d-s_2-1} f_{ijr}^{s_2} \end{array})
$

$ \begin{array}{lllllllllll} h_{ijr}= (&f_{ijr}c_r^{s_1-1} & \cdots & f_{ijr}c_r^{d-2} & f_{ijr}^{2}c_r^{s_1-2} & \cdots & f_{ijr}^{2}c_r^{d-3} & \cdots &
 f_{ijr}^{d-s_3-1}c_r^{s_1+s_3+1-d} & \cdots & f_{ijr}^{d-s_3-1}c_r^{s_3}
 \end{array}).
 $
}
\end{flushleft}

 If $2d \geq  s_1+s_2+s_3$, we set
 {\footnotesize
 $$m=\frac{1}{2}(s_{1}(s_{1}-1)-(s_{2}+s_{3})(s_{2}+s_{3}+1-2d)).$$ }
In this case, $M_{\cA}^{d}$ will be the matrix with $m$  columns and  $s_1s_2s_3$ rows $l_{ijr}$ given by
\[l_{ijr}=(h_{ijr}|p_{ijr}|q_{ijr}|t_{ijr}), \quad 1 \leq i \leq s_1, \quad  1 \leq j \leq s_2, \quad 1 \leq r \leq s_3, \quad
\text{where}  \]

\begin{flushleft}
{\scriptsize
$ \begin{array}{lllllllllll} h_{ijr}= (&f_{ijr}c_r^{s_1-1} & \cdots & f_{ijr}c_r^{d-2} & f_{ijr}^{2}c_r^{s_1-2} & \cdots & f_{ijr}^{2}c_r^{d-3} & \cdots &
 f_{ijr}^{d-s_3-1}c_r^{s_1+s_3+1-d} & \cdots & f_{ijr}^{d-s_3-1}c_r^{s_3}
 \end{array}  )
$

$\begin{array}{lrrrllll} p_{ijr}= (&c_r^{s_1+s_3-d} f_{ijr}^{d-s_3} & \cdots & c_r^{s_1+s_3-d}f_{ijr}^{3d-s_1-s_2-2s_3-1} &
c_r^{s_1+s_3-d+1}f_{ijr}^{d-s_3} & \cdots \\
&&&\cdots&-c_r^{s_1+s_2-d+1}f_{ijr}^{3d-s_1-s_2-2s_3-2} & \cdots &c_r^{d-s_2-1}f_{ijr}^{d-s_3}) \end{array}
$

$ \begin{array}{lrrrrrrrrl} q_{ijr}=(&  f_{ijr}^{s_1} & \cdots & f_{ijr}^{2d-s_2-s_3-1} &
c_rf_{ijr}^{s_1-1} & \cdots &c_rf_{ijr}^{2d-s_2-s_3-2} & \cdots &c_r^{s_1+s_3-d-1}f_{ijr}^{d-s_3+1} &\cdots \\
&&&&&&&&\cdots & c_r^{s_1+s_3-d-1} f_{ijr}^{3d-s_1-s_2}) \end{array}
$

$ \begin{array}{lllllllllll} t_{ijr}= (&f_{ijr}^{2d-s_2-s_3} & \cdots & f_{ijr}^{d-1} &
c_rf_{ijr}^{2d-s_2-s_3-1} & \cdots &c_rf_{ijr}^{d-2} & \cdots &c_r^{d-s_2-1}f_{ijr}^{d-s_3+1} & \cdots & c_r^{d-s_2-1} f_{ijr}^{s_2} \end{array}).
$
 }
 \end{flushleft}

Finally, let us define the {\em expected rank} of $M_{\cA}^{d}$ to be

{\scriptsize
\[
\exprk(M_{\cA}^{d})=
\left\{
\begin{array}{l}
(d-s_{1})(2d-s_{2}-s_{3}-1),\quad\text{if}\quad 2d\leq s_{1}+s_{2}+s_{3}-1\\
\frac{1}{2}(s_{1}(s_{1}-1)-(s_{2}+s_{3})(s_{3}+s_{3}+1-2d),\quad\text{if}\quad 2d\geq s_{1}+s_{2}+s_{3}.
\end{array}
\right.
\]
}
\end{notation}

Keeping this notation, we can determine when the arrangement $\cA$ has a degree $d$ derivation in the range $s_3+1\leq d\leq s_1+s_2$.

\begin{theorem}
\label{trunyo}
Let $\cA$ be a complete triangular arrangement with $0\leq s_1\leq s_2\leq s_3$ and assume that $s_3+1\leq s_1+s_2$.  Let $s_3+1\leq d\leq s_1+s_2$. Then,
$\Der(-\log \cA)_{d}\subseteq R_{d-1}\theta_{E}$ if and only if $M_{\cA}^{d}$ has maximal rank.
\end{theorem}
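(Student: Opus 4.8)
\emph{Sketch of the proof.} The argument rests on the explicit description of degree-$d$ logarithmic derivations obtained in the proof of Theorem~\ref{ThmBound}, specialized to $n=2$. Write a candidate derivation as $\theta=\widetilde f_0\partial_x+\widetilde f_1\partial_y+\widetilde f_2\partial_z$. The coordinate lines $x,y,z\in\cA$ force $\widetilde f_k=f_kx_k$ with $f_k\in R_{d-1}$, and, using that the coefficients $a_i,b_j,c_t$ are nonzero and that the lines in each of the three families are pairwise distinct, the conditions $\theta(l)\in(l)$ for the non-coordinate lines $l$ amount to
\[
A\mid f_0-f_1,\qquad B\mid f_0-f_2,\qquad C\mid f_1-f_2,
\]
where $A=\prod_{i=1}^{s_1}(x-a_iy)$, $B=\prod_{j=1}^{s_2}(b_jx-z)$ and $C=\prod_{t=1}^{s_3}(y-c_tz)$. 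Since $R_{d-1}\theta_{E}$ is precisely the locus $f_0=f_1=f_2$, after passing to the quotient $\Der(-\log\cA)_d/R_{d-1}\theta_{E}$ the derivation is recorded by the pair $(P,Q):=(f_0-f_1,\,f_0-f_2)\in R_{d-1}^{2}$, subject to $A\mid P$, $B\mid Q$ and $C\mid(Q-P)$. Hence the statement $\Der(-\log\cA)_d\subseteq R_{d-1}\theta_{E}$ is equivalent to: these three divisibilities force $P=Q=0$.

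Next I would make these divisibilities explicit on coefficients, as is done in equations~(\ref{Eq:substitute1})--(\ref{L-A}) of the proof of Theorem~\ref{ThmBound}. Writing $P$, $Q$ and $Q-P$ in the monomial basis and grouping by one of the variables, $A\mid P$ says that the $z^{c}$-component of $P$ vanishes when $c>d-1-s_1$ (degree reasons) and vanishes at the $s_1$ points $(a_i:1)$ when $c\le d-1-s_1$; symmetrically, $B\mid Q$ imposes vanishing at the $s_2$ points $(1:b_j)$ on the $y^{b}$-components of $Q$, and $C\mid(Q-P)$ imposes vanishing at the $s_3$ points $(c_r:1)$ on the $x^{a}$-components of $Q-P$. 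Running, one family at a time, the ``polynomial with more roots than its degree'' mechanism of Theorem~\ref{ThmBound} and using the identity $Q=(Q-P)+P$ to couple the three families, one cascades down the unknowns until the whole system is carried by a distinguished subset of them; a degree count identifies the number of these surviving unknowns with $(d-s_1)(2d-s_2-s_3-1)$ when $2d\le s_1+s_2+s_3-1$ and with $\frac{1}{2}\bigl(s_1(s_1-1)-(s_2+s_3)(s_2+s_3+1-2d)\bigr)$ when $2d\ge s_1+s_2+s_3$ --- exactly the number of columns of $M^{d}_{\cA}$, which is $\exprk(M^{d}_{\cA})$ by definition. The residual equations are naturally indexed by the triples $(i,j,r)$ with $1\le i\le s_1$, $1\le j\le s_2$, $1\le r\le s_3$ (one line chosen from each family), and a direct computation that combines the powers of $a_i$, of $c_r$ and of $b_j$ produced by the three cascades around the triangle $l_i\cup l_j\cup l_r$ identifies the coefficients of the $(i,j,r)$-th equation with the row $l_{ijr}=(g_{ijr}\mid h_{ijr})$, respectively $l_{ijr}=(h_{ijr}\mid p_{ijr}\mid q_{ijr}\mid t_{ijr})$, of the matrix $M^{d}_{\cA}$ displayed in the Notation preceding the theorem, where $f_{ijr}^{s}=b_j^{-s}-(a_ic_r)^{s}$. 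In short, the reduced system is $M^{d}_{\cA}\mathbf{v}=0$, with $\mathbf{v}$ the vector of surviving unknowns. (One may note that $f_{ijr}^{1}=0$ precisely when $l_i,l_j,l_r$ are concurrent, i.e.\ span an inner triple point, which explains the relevance of this quantity.)

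Finally, $P=Q=0$ is forced if and only if $M^{d}_{\cA}\mathbf{v}=0$ has only the trivial solution, i.e.\ if and only if $M^{d}_{\cA}$ has full column rank; since the number of columns equals $\exprk(M^{d}_{\cA})$, this is the same as $\operatorname{rank}M^{d}_{\cA}=\exprk(M^{d}_{\cA})$, that is, $M^{d}_{\cA}$ has maximal rank. Combined with the first paragraph this gives the asserted equivalence. The translation of the first paragraph and the vanishing-at-points reformulation are routine; the delicate point, which I expect to be the main obstacle, is the bookkeeping of the second paragraph --- pinning down exactly which unknowns persist after the cascade (this is where the dichotomy between $2d$ and $s_1+s_2+s_3$ and the precise shapes of the blocks $g,h,p,q,t$ enter), checking that their number equals $\exprk(M^{d}_{\cA})$, and verifying that the persisting relations are faithfully indexed by the triples $(i,j,r)$ with the coefficients prescribed in the Notation. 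The rest is formal linear algebra.
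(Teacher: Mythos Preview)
Your sketch is correct and follows essentially the same route as the paper: the divisibility reformulation $A\mid P$, $B\mid Q$, $C\mid(Q-P)$ is exactly the coefficient system the paper writes down (their $X^{12},X^{13},X^{23}$ are the coefficients of your $P,Q,Q-P$), and the ``cascade'' you describe is precisely the substitution chain (\ref{x12})--(\ref{0x23}) that reduces everything to the single coupled relation (\ref{lhsrhs}), whose expansion in the two regimes $2d\lessgtr s_1+s_2+s_3$ produces the rows $l_{ijr}$ of $M^d_{\cA}$. The bookkeeping you flag as the main obstacle is indeed the bulk of the paper's argument, carried out there by explicit manipulation of the two sides of (\ref{lhsrhs}).
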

\begin{proof}

Let us recall the proof of Theorem \ref{ThmBound} in the case $n=2$. A derivation $\theta\in \Der(-\log \cA )_{d}$ is identified with a triple $(f_{1},f_{2},f_{3})\in R_{d-1}^{3}$ such that, writing $f_{i}=\sum_{w_{1}+w_{2}+w_{3}=d-1}\alpha^{i}_{(w_{1},w_{2},w_{3})}x^{w_{1}}y^{w_{2}}z^{w_{3}}$ the following conditions are satisfied:
 $$  \sum_{l=0}^{d-1-k} a_{i}^{l} (\alpha ^1_{(l,d-1-k-l,k)} -\alpha ^2_{(l,d-1-k-l,k)}) = 0 \text{ for } 0\le k \le d-1 \text{ and } 1\le i \le s_1,$$
 $$ \sum_{l=0}^{d-1-k} b_{j}^{l} (\alpha ^1_{(d-1-k-l,k,l)} -\alpha ^3_{(d-1-k-l,k,l)}) = 0 \text{ for } 0\le k \le d-1 \text{ and } 1\le j\le s_2,$$
 $$ \sum_{l=0}^{d-1-k} c_{r}^{l} (\alpha ^2_{(k,l,d-1-k-l)} -\alpha ^3_{(k,l,d-1-k-l)}) = 0\text{ for } 0\le k \le d-1, \text{ and } 1\le r\le s_3,$$
 $$ \alpha ^2_{(k,l,d-1-k-l)}=\alpha ^3_{(k,l,d-1-k-l)} \text{ for all } 0\le k \le d-1 \text{ and  } 0\le l\le d-1-k,$$
 $$\alpha ^1_{(l,d-1-k-l,k)}=\alpha ^2_{(l,d-1-k-l,k)} \text{ for all } d-s_1\le k \le d-1 \text{ and  } 0\le l\le d-1-k, \text{ and } $$
 $$\alpha ^1_{(d-1-k-l,k,l)}=\alpha ^3_{(d-1-k-l,k,l)} \text{ for all } d-s_2\le k \le d-1 \text{ and  } 0\le l\le d-1-k.$$

 Let us now define, for any partition $(w_{1},w_{2},w_{3})$ of $d-1$, 
 $$X^{1,2}_{(w_{1},w_{2},w_{3})} =\alpha ^1_{(w_{1},w_{2},w_{3})} -\alpha ^2_{(w_{1},w_{2},w_{3})},$$
 $$X^{1,3}_{(w_{1},w_{2},w_{3})} =\alpha ^1_{(w_{1},w_{2},w_{3})} - \alpha ^3_{(w_{1},w_{2},w_{3})} \text{ and }$$
 $$X^{2,3}_{(w_{1},w_{2},w_{3})} =\alpha ^2_{(w_{1},w_{2},w_{3})} - \alpha ^2_{(w_{1},w_{2},w_{3})}.$$

 Hence, any derivation $\theta\in \Der(-\log \cA )_{d}$ corresponds bijectively with a solution of the following homogeneous linear system:

\begin{equation}\label{System}
\left\{
\begin{array}{l}
 \sum_{l=0}^{d-1-k} a_i^lX_{(l,d-1-k-l,k)}^{12}=0, \quad \mbox{for} \quad 1 \leq i \leq s_1\\
 \sum_{l=0}^{d-1-k} b_j^lX_{(d-1-k-l,k,l)}^{13}=0, \quad \mbox{for} \quad 1 \leq j \leq s_2 \\
 \sum_{l=0}^{d-1-k} c_r^lX_{(k,l,d-1-k-l)}^{23}=0,\quad \mbox{for} \quad 1 \leq r \leq s_3 \\
 X_{(l,d-1-k-l,k)}^{12}=0 \quad \mbox{for} \quad d-s_1 \leq k \leq d-1 \quad \mbox{and} \quad 0 \leq l \leq d-1-k \\
 X_{(d-1-k-l,k,l)}^{13}=0 \quad \mbox{for} \quad d-s_2 \leq k \leq d-1 \quad \mbox{and} \quad 0 \leq l \leq d-1-k \\
 X_{(k,l,d-1-k-l)}^{23}=0 \quad \mbox{for} \quad d-s_3 \leq k \leq d-1 \quad \mbox{and} \quad 0 \leq l \leq d-1-k \\
 X_{(w_{1},w_{2},w_{3})}^{13}=X_{(w_{1},w_{2},w_{3})}^{12}+X_{(w_{1},w_{2},w_{3})}^{23} \quad \mbox{for} \quad w_{1}+w_{2}+w_{3}=d-1.
 \end{array}
 \right.
\end{equation}

Moreover, notice that $\Der(-\log \cA )_{d}\nsubseteq R_{d-1}\theta_{E}$ if and only if (\ref{System}) has some solution different from zero. 
Our strategy will be to determine when the matrix associated to the system (\ref{System}) has maximal rank.
First, from (\ref{System}) we deduce the following equalities:

\begin{equation} \label{x12}  X^{12}_{(0,d-1-k,k)}=-\sum_{l=1}^{d-1-k} a_i^lX_{(l,d-1-k-l,k)}^{12}, \quad \mbox{for} \quad 1 \leq i \leq s_1
\quad \mbox{and} \quad 0 \leq k \leq d-s_1-1 \end{equation}
\begin{equation}  \label{x13} X_{(0,k,d-1-k)}^{13}=- \sum_{l=1}^{d-k-1} b_j^{-l}X_{(l,k,d-1-k-l)}^{13}, \quad \mbox{for} \quad 1 \leq j \leq s_2
\quad \mbox{and} \quad 0 \leq k \leq d-s_2-1 \end{equation}
\begin{equation} \label{x23} X_{(k,0,d-1-k)}= - \sum_{l=1}^{d-1-k} c_r^lX_{(k,l,d-1-k-l)}^{23},\quad \mbox{for} \quad 1 \leq r \leq s_3
\quad \mbox{and} \quad 0 \leq k \leq d-s_3-1. \end{equation}
In addition, for any $l$ with $0 \leq l \leq d-1-k$,
\begin{equation} \label{0x12}  X_{(l,d-1-k-l,k)}^{12}=0, \quad X_{(l,d-1-k-l,k)}^{23}=X_{(l,d-1-k-l,k)}^{13} \quad \mbox{for} \quad d-s_1 \leq k \leq d-1   \end{equation}
\begin{equation} \label{0x13} X_{(d-1-k-l,k,l)}^{13}=0, \quad X_{(d-1-k-l,k,l)}^{23}=-X_{(d-1-k-l,k,l)}^{12} \quad \mbox{for} \quad d-s_2 \leq k \leq d-1  \end{equation}
\begin{equation} \label{0x23} X_{(k,l,d-1-k-l)}^{23}=0, \quad X_{(k,l,d-1-k-l)}^{12}=X_{(k,l,d-1-k-l)}^{13} \quad \mbox{for} \quad d-s_3 \leq k \leq d-1 \end{equation}
and
\begin{equation} \label{lhsrhs}
\sum_{l=0}^{d-2}b_j^{l-d+1}X_{(d-1-l,0,l)}^{13}=\sum_{l=1}^{d-1}c_r^{l}X_{(0,l,d-1-l)}^{23},\quad \mbox{for}\quad 1\leq j\leq s_2\quad\mbox{and}\quad 1\leq r\leq s_3.
\end{equation}

Therefore, a solution of equations from (\ref{x12}) to (\ref{0x23}) corresponds to a derivation in $\Der(-\log \cA )_{d}$ if and only if it satisfies the condition (\ref{lhsrhs}). In order to study this last equation we need to distinguish two different cases:

\noindent {\bf Case 1: $2d \leq  s_1+s_2+s_3-1$.}

First of all, we will analyze the left hand side of (\ref{lhsrhs}). To this end, we write it as
\[ \begin{array}{ll} L:= \sum_{l=0}^{d-2}b_j^{l-d+1}X_{(d-1-l,0,l)}^{13} & =\sum_{s=1}^{d-1}b_j^{-s}X_{(s,0,d-1-s)}^{13} \\
&= \sum_{s=1}^{d-s_3-1}b_j^{-s}X_{(s,0,d-1-s)}^{13}+ \sum_{s=d-s_3}^{s_1-1}b_j^{-s}X_{(s,0,d-1-s)}^{13} \\ & +\sum_{s=s_1}^{d-1}b_j^{-s}X_{(s,0,d-1-s)}^{13} \\
& = \sum_{s=1}^{d-s_3-1}b_j^{-s}X_{(s,0,d-1-s)}^{23} +\sum_{s=s_1}^{d-1}b_j^{-s}X_{(s,0,d-1-s)}^{12},
\end{array}\]
where the last equality follows from (\ref{0x13}) and (\ref{0x23}). Now using (\ref{x23}) and again equalities (\ref{0x12}) and (\ref{0x13}) we get
 \[ \begin{array}{ll} L
& = \sum_{s=1}^{d-s_3-1}b_j^{-s}[ - \sum_{l=1}^{d-s_2-1}c_r^lX^{23}_{(s,l,d-1-s-l)} -\sum_{l=s_1-s}^{d-s-1}c_r^lX^{23}_{(s,l,d-1-s-l)} ] \\ & +\sum_{s=s_1}^{d-1}b_j^{-s}X_{(s,0,d-1-s)}^{12} \\
& = -\sum_{s=1}^{d-s_3-1}  \sum_{l=1}^{d-s_2-1}b_j^{-s}c_r^lX^{13}_{(s,l,d-1-s-l)}  +\sum_{s=1}^{d-s_3-1} \sum_{l=s_1-s}^{d-s-1}b_j^{-s}c_r^lX^{12}_{(s,l,d-1-s-l)} \\ & +\sum_{s=s_1}^{d-1}b_j^{-s}X_{(s,0,d-1-s)}^{12}. \\
\end{array}\]
Now we will analyze the right hand side of (\ref{lhsrhs}). To this end, we write it as
\[ \begin{array}{ll} R:= \sum_{l=1}^{d-1}c_r^{l}X_{(0,l,d-1-l)}^{23} &  =\sum_{s=1}^{d-1}c_r^{s}X_{(0,s,d-1-s)}^{23} \\
&= \sum_{s=1}^{d-s_2-1}c_r^sX^{13}_{(0,s,d-1-s)}- \sum_{s=s_1}^{d-1}c_r^sX^{12}_{(0,s,d-1-s)} \\
& =\sum_{s=1}^{d-s_2-1}c_r^sX^{13}_{(0,s,d-1-s)}- \sum_{s=0}^{d-s_1-1}c_r^{d-1-s}X^{12}_{(0,d-1-s,s)}, \end{array} \]
where we have used equalities from (\ref{0x12}) to (\ref{0x23}) and we have performed a change of variables in the last equality.
Using (\ref{x13}) and (\ref{x12}) and having in mind the ranges  where the variables vanish (given by (\ref{0x12}) to (\ref{0x23})),  we get
\[ \begin{array}{ll} R &=  - \sum_{s=1}^{d-s_2-1} \sum_{l=1}^{d-s_3-1}c_r^sb_j^{-l}X^{13}_{(l,s,d-1-s-l)} -
\sum_{s=1}^{d-s_2-1} \sum_{l=s_1-s}^{d-s-1}c_r^sb_j^{-l}X^{12}_{(l,s,d-1-s-l)} \\
&+  \sum_{s=0}^{d-s_1-1} \sum_{l=1}^{d-s_3-1}c_r^{d-1-s}a_i^{l}X^{12}_{(l,d-1-s-l,s)} +
\sum_{s=0}^{d-s_1-1} \sum_{l=s_2-s}^{d-s-1}c_r^{d-1-s}a_i^{l}X^{12}_{(l,d-1-s-l,s)}. \end{array} \]
Writing together both expressions, we obtain:
{
  \[ \begin{array}{lll}
 -\sum_{s=1}^{d-s_3-1}  \sum_{l=1}^{d-s_2-1}b_j^{-s}c_r^lX^{13}_{(s,l,d-1-s-l)} & & \\ -\sum_{s=1}^{d-s_3-1} \sum_{l=s_1-s}^{d-s-1}b_j^{-s}c_r^lX^{12}_{(s,l,d-1-s-l)}& & \\ +\sum_{s=s_1}^{d-1}b_j^{-s}X_{(s,0,d-1-s)}^{12} & & \\
 & = &  - \sum_{s=1}^{d-s_2-1} \sum_{l=1}^{d-s_3-1}c_r^sb_j^{-l}X^{13}_{(l,s,d-1-s-l)} \\ & &  -
\sum_{s=1}^{d-s_2-1} \sum_{l=s_1-s}^{d-s-1}c_r^sb_j^{-l}X^{12}_{(l,s,d-1-s-l)} \\
& & +  \sum_{s=0}^{d-s_1-1} \sum_{l=1}^{d-s_3-1}c_r^{d-1-s}a_i^{l}X^{12}_{(l,d-1-s-l,s)} \\ & &  +
\sum_{s=0}^{d-s_1-1} \sum_{l=s_2-s}^{d-s-1}c_r^{d-1-s}a_i^{l}X^{12}_{(l,d-1-s-l,s)}. \end{array} \]
}
Reordering the equation and performing suitable changes of coordinates we get:
{
  \[ \begin{array}{l}
 \sum_{s=1}^{d-s_3-1} \sum_{l=s_1-s}^{d-s-1}c_r^{l}[b_j^{-s}-(a_ic_r)^{s} ]X^{12}_{(s,l,d-1-s-l)} +  \\
 \sum_{l=0}^{d-s_2-1} \sum_{s=s_1-l}^{d-l-1}c_r^{l}[b_j^{-s}-(a_ic_r)^{s} ]X^{12}_{(s,l,d-1-s-l)}  =0. \end{array} \]
}

Hence, $\Der(-\log \cA)\subset R_{d-1}\theta_{E}$ if and only if $M_{\cA}^d$ has maximal rank.

\noindent {\bf Case 2: $2d \geq  s_1+s_2+s_3$}.

Reasoning analogously as in the above case, we deduce that the condition (\ref{lhsrhs}) is equivalent to

  \[ \begin{array}{l}
 \sum_{s=1}^{d-s_3-1} \sum_{l=s_1-s}^{d-s-1}c_r^{l}[b_j^{-s}-(a_ic_r)^{s} ]X^{12}_{(s,l,d-1-s-l)} +  \\
 \sum_{l=s_1+s_3-d}^{d-s_2-1} \sum_{s=d-s_3}^{2d-s_2-s_3-l-1}c_r^{l}[b_j^{-s}-(a_ic_r)^{s} ]X^{12}_{(s,l,d-1-s-l)} +  \\
  \sum_{l=0}^{s_1+s_3-d-1} \sum_{s=s_1-l}^{2d-s_2-s_3-l-1}c_r^{l}[b_j^{-s}-(a_ic_r)^{s} ]X^{12}_{(s,l,d-1-s-l)} +  \\
 \sum_{l=0}^{d-s_2-1} \sum_{s=2d-s_2-s_3-l}^{d-l-1}c_r^{l}[b_j^{-s}-(a_ic_r)^{s} ]X^{12}_{(s,l,d-1-s-l)} =0. \end{array} \]

Thus, $\Der(-\log \cA)\subset R_{d-1}\theta_{E}$ if and only if $M_{\cA}^d$ has maximal rank.
\end{proof}

We end this paper with a characterization of free triangular arrangements.

\begin{corollary}
Let $\cA\in\cH(s_{1},s_{2},s_{3})$ be a triangular arrangement with $s_{3}+1\leq s_{1}+s_{2}$. Then, $\cA$ is free with exponents $(1,s_{3}+1+k,s_{1}+s_{2}+1-k)$ with $k\leq \frac{s_{1}+s_{2}-s_{3}}{2}$ if and only if
{
\[
rk(M_{\cA}^{d})=
\left\{
\begin{array}{ll}
\exprk(M_{\cA}^{d}),&\text{if}\quad s_{3}+1\leq d\leq s_{3}+k\\
\exprk(M_{\cA}^{d})-\binom{2+d-(s_{3}+1+k)}{2},&\text{if}\quad s_{3}+1+k\leq d\leq s_{1}+s_{2}-k\\
\exprk(M_{\cA}^{d})-\binom{2+d-(s_{3}+1+k)}{2}-\binom{2+d-(s_{1}+s_{2}+1-k)}{2},&\text{if}\quad s_{1}+s_{2}+1-k\leq d\leq s_{1}+s_{2}+1.
\end{array}
\right.
\]
}
\end{corollary}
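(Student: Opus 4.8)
The plan is to translate the statement into a splitting criterion for the rank–two bundle $\cT_{\cA}$ on $\PP^{2}$ and then to recognise that splitting from the Hilbert function of $\syz(J_{\cA})$, which is precisely what the matrices $M_{\cA}^{d}$ encode. Put $a=s_{3}+1+k$ and $b=s_{1}+s_{2}+1-k$; the hypothesis $k\le\frac{s_{1}+s_{2}-s_{3}}{2}$ gives $a\le b$, and one has $a+b=s_{1}+s_{2}+s_{3}+2=\deg f_{\cA}-1$. Since $\Der(-\log\cA)=R\cdot\theta_{E}\oplus\syz(J_{\cA})$, the arrangement $\cA$ is free with exponents $(1,a,b)$ if and only if $\syz(J_{\cA})\cong R(-a)\oplus R(-b)$, equivalently --- because $\syz(J_{\cA})$ is reflexive, hence of depth $\ge 2$, hence equal to $\Gamma_{*}(\cT_{\cA})$ --- if and only if $\cT_{\cA}\cong\cO_{\PP^{2}}(-a)\oplus\cO_{\PP^{2}}(-b)$. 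In particular $\dim_{k}\syz(J_{\cA})_{d}=h^{0}(\cT_{\cA}(d))$ for every $d$.

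First I would upgrade Theorem \ref{trunyo} to a rank count. Its proof exhibits a bijection between $\Der(-\log\cA)_{d}$ and the solution space of the linear system (\ref{System}), under which $R_{d-1}\theta_{E}$ maps to $0$; the elimination carried out there (the equalities (\ref{x12})--(\ref{0x23}) together with (\ref{lhsrhs})) then identifies the remaining solution space with $\ker M_{\cA}^{d}$. As $M_{\cA}^{d}$ has exactly $\exprk(M_{\cA}^{d})$ columns, this gives
$$\dim_{k}\syz(J_{\cA})_{d}=\dim_{k}\bigl(\Der(-\log\cA)_{d}/R_{d-1}\theta_{E}\bigr)=\exprk(M_{\cA}^{d})-\operatorname{rk}(M_{\cA}^{d}).$$
Hence, with the convention $\binom{m}{2}=0$ for $m<2$, the three--case rank condition of the statement says exactly that
$$\dim_{k}\syz(J_{\cA})_{d}=\binom{2+d-a}{2}+\binom{2+d-b}{2}=\dim_{k}\bigl(R(-a)\oplus R(-b)\bigr)_{d}$$
for all $d$ with $s_{3}+1\le d\le s_{1}+s_{2}+1$; that is, the Hilbert functions of $\syz(J_{\cA})$ and of $R(-a)\oplus R(-b)$ coincide on that range.

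It then remains to see that this finite amount of data forces $\cT_{\cA}\cong\cO(-a)\oplus\cO(-b)$, the reverse implication being immediate. By Remark \ref{Remark-bound-Triangular} we have $\indeg\syz(J_{\cA})\ge s_{3}+1$, so both Hilbert functions also vanish for $d\le s_{3}$ (note $a>s_{3}$), and therefore agree for all $d\le s_{1}+s_{2}+1$. In particular $\dim_{k}\syz(J_{\cA})_{a-1}=0$ and $\dim_{k}\syz(J_{\cA})_{a}\ge 1$, so a nonzero section $\sigma\in H^{0}(\cT_{\cA}(a))$ cannot vanish along a curve; its zero scheme $Z$ is zero--dimensional (or empty) and $\sigma$ fits in
$$0\longrightarrow\cO(-a)\xrightarrow{\ \sigma\ }\cT_{\cA}\longrightarrow I_{Z}(-b)\longrightarrow 0 .$$
Twisting and taking cohomology (using $H^{1}(\cO_{\PP^{2}}(m))=0$) yields $h^{0}(\cT_{\cA}(d))=h^{0}(\cO(d-a))+h^{0}(I_{Z}(d-b))$ for every $d$; comparing with the Hilbert--function match for $d\le s_{1}+s_{2}+1$ gives $h^{0}(I_{Z}(m))=h^{0}(\cO_{\PP^{2}}(m))$ for all $m\le s_{1}+s_{2}+1-b=k$, and taking $m=0$ forces $Z=\emptyset$. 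Then the displayed sequence splits (as $\operatorname{Ext}^{1}(\cO(-b),\cO(-a))=H^{1}(\cO(b-a))=0$), so $\cT_{\cA}\cong\cO(-a)\oplus\cO(-b)$; the borderline case $a=b$ runs verbatim with $I_{Z}(-b)$ replaced by $I_{Z}(-a)$.

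I expect the only genuinely delicate point to be the rank count in the second paragraph: one must trace carefully through the variable elimination in the proof of Theorem \ref{trunyo} to verify that the reduced system is governed by a matrix with precisely $\exprk(M_{\cA}^{d})$ columns whose kernel has dimension $\dim_{k}\syz(J_{\cA})_{d}$, in both regimes $2d\le s_{1}+s_{2}+s_{3}-1$ and $2d\ge s_{1}+s_{2}+s_{3}$, together with the purely combinatorial check that the truncated binomials $\binom{2+d-a}{2}$ and $\binom{2+d-b}{2}$ reproduce the Hilbert function of $R(-a)\oplus R(-b)$ on the relevant subintervals of $[s_{3}+1,s_{1}+s_{2}+1]$. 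Everything else reduces to standard facts about rank--two bundles on $\PP^{2}$.
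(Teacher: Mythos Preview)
The paper's own proof is the single sentence ``It follows directly from Theorem~\ref{trunyo}'', so there is essentially nothing to compare against; your write-up is a genuine proof where the paper offers none.  Your argument is correct and is the natural way to make the corollary precise.  A few remarks on how it relates to what the paper leaves implicit:

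Your key input is the quantitative identity $\dim_{k}\syz(J_{\cA})_{d}=\exprk(M_{\cA}^{d})-\operatorname{rk}(M_{\cA}^{d})$, which strictly upgrades the yes/no statement of Theorem~\ref{trunyo}.  You are right that this must be read out of the \emph{proof} of Theorem~\ref{trunyo} rather than its statement: one needs that the elimination performed there produces a \emph{bijection} between solutions of system~(\ref{System}) modulo the Euler line and $\ker M_{\cA}^{d}$, not merely the implication ``nontrivial solution $\Leftrightarrow$ rank drop''.  This is exactly the point you flag as delicate, and it is the only place where extra work beyond the paper's text is required.

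For the converse direction you supply a clean bundle-theoretic argument (minimal section of $\cT_{\cA}(a)$, Koszul sequence, $h^{0}(I_{Z})=1$ forces $Z=\emptyset$, then $\operatorname{Ext}^{1}$ vanishes so the extension splits).  The paper gives no indication of how it intends to recover freeness from the rank data, so this is genuinely your contribution; it is the standard and efficient way to do it on $\PP^{2}$.  Note that in fact your argument uses only the rank values at $d=a-1,\,a,\,b$ (to get $\indeg=a$ and $h^{0}(I_{Z})=1$), so the full list of rank conditions in the corollary is more than is logically needed for the ``if'' direction---consistent with the feeling that the corollary is really a Hilbert-function reformulation of freeness.
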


\begin{proof}
It follows directly from Theorem \ref{trunyo}.
\end{proof}



\begin{thebibliography}{ll}
\bibitem{ADS} T. Abe, A. Dimca and G. Sticlaru, {\em Addition–deletion results for the minimal degree of logarithmic derivations of hyperplane arrangements and maximal Tjurina line arrangements}. J. Algebr. Comb. (2020) 

\bibitem{Dimca16} A. Dimca, {\em Curve arrangements, pencils, and Jacobian syzygies}, Michigan Math. J. {\bf 66:2} (2017) 347--365.

\bibitem{Dolgachev} I. Dolgachev, {\em Logarithmic sheaves attached to arrangements
of hyperplanes}. J. Math. Kyoto Univ. {\bf 47}, 1, (2007) 35--64.

\bibitem{Dolgachev-Kapranov} I. Dolgachev and M. Kapranov, {\em Arrangements of hyperplanes and vector bundles on $\PP^{n}$}. Duke Math. J. {\bf 71} (1993) 633--664.

\bibitem{Edelman-Reiner} P. H. Edelman and V. Reiner, {\em Free hyperplane arrangements between $A_{n-1}$ and $B_{n}$}. Math. Z. {\bf 215} (1994)
347--365.

\bibitem{FMV} D. Faenzi, D. Matei and  J. Vall\`{e}s, {\em Hyperplane arrangements of Torelli type}, Comp. Math. {\bf 149}, 2, (2013)
309--332.

\bibitem{FV} D. Faenzi and  J. Vall\`{e}s, {\em Logarithmic bundles and Line arrangements, an approach
via the standard construction}, J. Lond. Math. Soc. {\bf 90} (2014)
675--694.

\bibitem{Ful} W. Fulton, {\em Intersection theory}, Springer (1998)

\bibitem{M2} D. R. Grayson and M. E. Stillman, {\em Macaulay2, a software system for research in Algebraic Geometry.} Available at http://www.math.uiuc.edu/Macaulay2/

\bibitem{MV} S. Marchesi and J. Vall\`{e}s, {\em Triangular arrangements on the projective plane}, arXiv 1903.08885 (2019)

\bibitem{Mustata-Schenck} M. Musta\c{t}\u{a} and H. K. Schenck, {\em The module of logarithmic $p-$forms of a locally free arrangement}. J. Alg. {\bf 241} (2001) 699--719.

\bibitem{Orlik-Terao} P. Orlik and H. Terao, {\em Arrangements of hyperplanes}, Springer-Verlag (1992).


\bibitem{S} K. Saito,{\em  Theory of logarithmic differential forms and logarithmic vector
fields}, J. Fac. Sci. Univ. Tokyo Sect. IA Math. {\bf 27} (1980), 265--291.

\bibitem{Terao} H. Terao, {\em Arrangements of hyperplanes and their freeness I.} J. Fac. Sci. Univ. Tokyo Sect. IA Math. {\bf 27,} No. 2 (1980), 293--312.

\bibitem{Ziegler} G. Ziegler, {\em Combinatorial construction of logarithmic differential forms}, Adv. in Math {\bf 76} (1989), 116--154.
\end{thebibliography}
\end{document}